\newtheorem{theorem}{Theorem}[section]
\newtheorem{proposition}[theorem]{Proposition}
\newtheorem{lemma}[theorem]{Lemma}
\newtheorem{corollary}[theorem]{Corollary}
\newtheorem{alphatheorem}{Theorem}
\theoremstyle{definition}
\newtheorem{definition}[theorem]{Definition}
\newtheorem{remark}[theorem]{Remark}
\DeclareMathOperator*{\EEE}{\scalerel*{\mathbb{E}}{\textstyle\sum}}
\newcommand{\inter}{\operatorname{int}}
\newcommand{\norm}[1]{\left\lVert #1 \right\rVert}
\newcommand{\normLip}[1]{\left\lVert #1 \right\rVert_{\mathrm{Lip}}}
\newcommand{\fp}[1]{\left\{ #1 \right\} }
\newcommand{\ip}[1]{\left\lfloor #1 \right\rfloor }
\newcommand{\floor}[1]{\left\lfloor #1 \right\rfloor}
\newcommand{\bra}[1]{\left( #1 \right)}
\newcommand{\brabig}[1]{\big( #1 \big)}
\renewcommand{\tilde}{\widetilde}
\renewcommand{\bar}{\overline}
\newcommand{\abs}[1]{\left|#1\right|}
\newcommand{\set}[2]{\left\{ #1 \ \middle| \ #2 \right\} }
\newcommand{\ceil}[1]{\left\lceil #1 \right\rceil}
\newcommand{\ffrac}[2]{#1/#2}
\renewcommand{\vec}[1]{#1}
\newcommand{\e}{\varepsilon}
\renewcommand{\a}{\alpha}
\renewcommand{\b}{\beta}
\newcommand{\NN}{\mathbb{N}}
\newcommand{\QQ}{\mathbb{Q}}
\newcommand{\EE}{\mathbb{E}}
\newcommand{\ZZ}{\mathbb{Z}}
\newcommand{\RR}{\mathbb{R}}
\newcommand{\CC}{\mathbb{C}}
\DeclareMathAlphabet{\mathpzc}{OT1}{pzc}{m}{it}
\newcommand{\cX}{\mathpzc{X}}
\newcommand{\cG}{\mathcal{G}}
\newcommand{\cH}{\mathcal{H}}
\newcommand{\cV}{\mathcal{V}}
\newcommand{\fg}{\mathfrak{g}}
\newcommand{\fh}{\mathfrak{h}}
\renewcommand{\k}{k}
\definecolor{checked}{RGB}{200,200,200}
\definecolor{final}{RGB}{ 255, 255, 255}
\definecolor{fresh}{RGB}{ 0, 128, 0}
\definecolor{wrong}{RGB}{ 255, 0, 0	}
\definecolor{Red}{RGB}{128,0,0}
\newcommand{\bb}{\mathbf}
\renewcommand{\subset}{\subseteq}
\newcommand*\patchAmsMathEnvironmentForLineno[1]{\expandafter\let\csname old#1\expandafter\endcsname\csname #1\endcsname
  \expandafter\let\csname oldend#1\expandafter\endcsname\csname end#1\endcsname
  \renewenvironment{#1}{\linenomath\csname old#1\endcsname}{\csname oldend#1\endcsname\endlinenomath}}\newcommand*\patchBothAmsMathEnvironmentsForLineno[1]{\patchAmsMathEnvironmentForLineno{#1}\patchAmsMathEnvironmentForLineno{#1*}}\AtBeginDocument{\patchBothAmsMathEnvironmentsForLineno{equation}\patchBothAmsMathEnvironmentsForLineno{align}\patchBothAmsMathEnvironmentsForLineno{flalign}\patchBothAmsMathEnvironmentsForLineno{alignat}\patchBothAmsMathEnvironmentsForLineno{gather}\patchBothAmsMathEnvironmentsForLineno{multline}}
\newcommand{\N}{\mathbb{N}}
\newcommand{\Z}{\mathbb{Z}}
\newcommand{\R}{\mathbb{R}}
\newcommand{\cb}[1]{\left\{#1 \right \}}
\newcommand{\rb}[1]{\left( #1 \right)}
\newcommand{\gp}{GP}
\newcommand{\pgp}{parametric GP}
\begin{document}

\author[J.\ Konieczny]{Jakub Konieczny}
\address{Universit\'e Claude Bernard Lyon 1, CNRS UMR 5208, Institut Camille Jordan,
F-69622 Villeurbanne Cedex, France}
\email{jakub.konieczny@gmail.com}

\author[C.\ M\"ullner]{Clemens M\"ullner}
\address{Institut f\"ur Diskrete Mathematik und Geometrie, TU Wien, Wiedner Hauptstr. 8--10, 1040 Wien, Austria}
\email{clemens.muellner@tuwien.ac.at}

\title[Bracket words along Hardy field sequences]{Bracket words along Hardy field sequences}
\date{\today}
 
\begin{abstract}
	We study bracket words, which are a far-reaching generalisation of Sturmian words, along Hardy field sequences, which are a far-reaching generalisation of Piatetski--Shapiro sequences $\lfloor n^c \rfloor$. We show that thus obtained sequences are deterministic (i.e., they have sub-exponential subword complexity) and satisfy Sarnak's conjecture.
\end{abstract}

\keywords{generalised polynomial; Sturmian word; subword complexity; deterministic sequence; nilsequence; Sarnak conjecture; M\"obius orthogonality}
\subjclass[2010]{}

\maketitle 
\tableofcontents

\section{Introduction}
\label{sec:intro}

One of the key results in a recent paper~\cite{Deshouillers2022} by J.-M.\ Deshouillers, M.\ Drmota, A.\ Shubin, L.\ Spiegelhofer and the second-named author states that the subword complexity of $(\floor{n^c} \bmod m)_{n=0}^{\infty}$ grows at most polynomially, which in particular shows that this sequence is deterministic.
The philosophy behind this result is the following: if we take a regularly growing function ($(\floor{n^c})_{n=0}^{\infty}$) and apply a very simple rule to it (taking the residue modulo $m$), then the resulting sequence is still quite simple (in this case it has polynomial subword complexity).
In this paper we vastly generalize both main aspects of this result, i.e. we replace $(\floor{n^c})_{n=0}^{\infty}$ with Hardy sequences and we replace taking the residue modulo $m$ by applying a bracket word.

Sturmian words are among the simplest and most extensively studied classes of infinite words over a finite alphabet. One of their defining properties is extremely low subword complexity. Recall that the subword complexity of an infinite word $\bb a = (a(n))_{n=0}^\infty$ over a finite alphabet $\Sigma$ is the function $p_a$ which assigns to each integer $N$ the number $p_{\bb a}(N)$ of words $w \in \Sigma^N$ which appear in ${\bb a}$. If there exists at least one value of $N$ such that $p_{\bb a}(N) \leq N$ then $a$ must be eventually periodic, in which case $p_a$ is bounded. If ${\bb a}$ is a Sturmian word then $p_{\bb a}(N) = N+1$ for all $N$, which in light of the remark above is the least subword complexity possible for a word that is not eventually periodic.

In \cite{AdamczewskiKonieczny-2022} B.\ Adamczewski and the first-named author studied a generalisation of Sturmian words obtained by considering letter-to-letter codings of finitely-valued generalised polynomials, which they dubbed \emph{bracket words}. A \emph{generalised polynomial} is an expression built from the usual polynomials using addition, multiplication and the integer part function. For instance, Sturmian words (up to letter-to-letter coding) take the form 
\[
	a(n) = \ip{\a (n+1) + \b} - \ip{\a n + \b}
\]
with $\a \in (0,1) \setminus \QQ$ and $b \in (0,1)$ (possibly with the integer part $\ip{\cdot}$ replaced by the ceiling $\lceil{\cdot}\rceil$) 
, and hence are special cases of bracket words. One of the main results of \cite{AdamczewskiKonieczny-2022} is a polynomial bound on subword complexity of bracket words: $p_{a}(N) \ll N^C$ for a constant $C$ (dependent on $a$).

In \cite{Deshouillers2022}, J.-M.\ Deshouillers, M.\ Drmota, A.\ Shubin, L.\ Spiegelhofer and the second-named author investigated synchronising automatic sequences along Piatetski--Shapiro sequences $(\ip{n^c})_{n=0}^\infty$, where $c > 1$. A special case which plays a crucial role in the argument is when the synchronising automatic sequence is periodic, in which case they obtained a polynomial bound on the subword complexity. 

As a joint extension of the two lines of investigation discussed above, we investigate bracket words along Piatetski--Shapiro sequences. In fact, we can deal with a considerably larger class of Hardy field functions with polynomial growth, which in addition to $n^c$ ($c > 1$) include logarithmic-exponential expressions such as $\a n^{c} + \a' n^{c'}$ or $n^c \log^{c'} n$, as well as some more complicated expressions such as $\log(n!)$.
Our first result is a bound on the subword complexity.
\begin{alphatheorem}\label{thm:A}
	Let ${\bb a} = (a(n))_{n \in \ZZ}$ be a (two-sided) bracket word over the alphabet $\Sigma$ and let $f \colon \RR_+ \to \RR$ be a Hardy field function with polynomial growth. Then the subword complexity of $(a(\ip{f(n)})_{n = 0}^\infty$ is bounded by $\exp(O(H^{\delta}))$ for some $0<\delta <1$.\end{alphatheorem}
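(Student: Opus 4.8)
The goal is to bound the number of distinct length-$N$ subwords of $(a(\ip{f(n)}))_{n=0}^{\infty}$ where $N \approx H$ (the statement uses $H$ for the window length). The overall strategy I would pursue combines the structure theory of bracket words with the "local linearity" of Hardy field functions. First, I would recall from \cite{AdamczewskiKonieczny-2022} that a bracket word $a(n)$ can be realized via a generalised polynomial, hence — after the standard reduction — as $a(n) = F(g^n \Gamma)$ for a nilsystem $(G/\Gamma, g)$ and a piecewise-polynomial (Riemann-integrable, with semialgebraic pieces) function $F$ on the nilmanifold; equivalently, $a(n)$ is determined by which cell of a fixed semialgebraic partition the point $g^n\Gamma$ lies in. The subword $a(\ip{f(m)}), a(\ip{f(m+1)}), \dots, a(\ip{f(m+H-1)})$ is then governed by the orbit segment $\{g^{\ip{f(m+j)}}\Gamma : 0 \le j < H\}$.

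**Key steps.**

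\textit{Step 1: Taylor expansion of $f$ on windows.} Since $f$ is a Hardy field function of polynomial growth, on a window $[m, m+H]$ its behavior is controlled by finitely many derivatives, and $\ip{f(m+j)} = \ip{f(m) + \sum_{i=1}^{d} f^{(i)}(m) j^i / i! + (\text{small error})}$ for $j < H$, where $d$ is a fixed integer exceeding the degree of growth and the error is $o(1)$ once $H$ is chosen as a suitable small power of $m$ (this is where the subpolynomial window length $H \asymp m^{\delta}$ enters — on such windows $f$ is genuinely approximated by a polynomial of bounded degree with controlled coefficients). So up to a bounded correction the exponent sequence is a polynomial in $j$ with real coefficients depending on $m$.

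\textit{Step 2: Reduce to counting orbit types of polynomial sequences on the nilmanifold.} Plugging the polynomial exponent into $g^{(\cdot)}\Gamma$ yields a polynomial sequence $j \mapsto g_1(j)\Gamma$ in the nilmanifold (via the usual fact that $n \mapsto g^n$ extends to a polynomial map and composing with an integer-polynomial-plus-bounded-error exponent stays polynomial of bounded degree/complexity). The subword is then the sequence of semialgebraic cells visited by this polynomial orbit of bounded degree. The number of distinct subwords of length $H$ is at most the number of distinct "combinatorial types" of (polynomial orbit of degree $\le d$ on $G/\Gamma$, evaluated at $j = 0, \dots, H-1$, recorded against a fixed finite semialgebraic partition), as the continuous parameters (the coefficients, i.e. the $f^{(i)}(m)$ reduced appropriately, and the starting point $f(m)$ mod lattice directions) range over the full parameter space.

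\textit{Step 3: Bound the number of combinatorial types by a cell-count for a semialgebraic family.} This is the crux. The set of parameters giving a fixed subword is a semialgebraic set cut out by $O(H)$ polynomial (in)equalities of bounded degree and bounded number of variables (the variables being the finitely many real coefficients of the degree-$d$ polynomial, lifted to a bounded-dimensional torus/nilmanifold chart). By the Milnor–Thom / Oleĭnik–Petrovskiĭ–Warren bound (or directly Warren's theorem on sign patterns), the number of realizable sign conditions of $O(H)$ polynomials of bounded degree in $O(1)$ variables is $\le (CH)^{O(1)}$ — \emph{polynomial} in $H$, not exponential. This would in fact give a stronger polynomial bound; the $\exp(O(H^\delta))$ bound in the theorem is the price paid for the sub-polynomial window and the error terms, i.e.\ for the fact that within each window we only know $f$ matches a polynomial up to an error that forces us to sometimes split into $\exp(O(H^\delta))$ sub-cases (near-boundary ambiguities, or the accumulation of the $o(1)$ rounding errors across the $H$ coordinates).

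**The main obstacle.**

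The hard part is Step 1 made uniform and quantitative: one must choose $\delta$ and the window length $H \asymp m^\delta$ so that (a) on $[m, m+H]$ the function $\ip{f}$ is pinned down by a bounded-degree polynomial with only an explicitly controlled rounding ambiguity, and (b) this ambiguity, propagated through the $H$ coordinates of the window and through the semialgebraic partition of the nilmanifold, inflates the type-count only by $\exp(O(H^{\delta'}))$ rather than by $\exp(\Omega(H))$. Controlling (a) requires the standard Hardy-field machinery (derivatives of $f$ are again Hardy functions, eventually monotone, comparable to powers of $\log$), and controlling (b) requires understanding how often the polynomial orbit can come within the rounding-scale of a cell boundary — a quantitative equidistribution / non-concentration estimate for polynomial sequences on nilmanifolds near semialgebraic hypersurfaces. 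I expect (b), the near-boundary count, to be the genuinely delicate point, as it is exactly the place where a naive bound would be exponential and where the structure of the nilmanifold (rather than just dimension counting) has to be exploited.
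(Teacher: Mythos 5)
Your overall skeleton --- approximate $f$ on a window by its Taylor polynomial, then count subwords of the bracket word along bounded-degree polynomials, and charge the $\exp(O(H^\delta))$ loss to the rounding error --- is exactly the paper's strategy. But two of your steps have genuine gaps. The more serious one is Step 3. You claim that the set of parameters producing a fixed subword is cut out by $O(H)$ polynomial sign conditions in $O(1)$ real variables, so that Warren/Milnor--Thom gives a polynomial count. This is not true as stated: after the exponent $\ip{p(j)}$ is fed into the nilorbit and reduced modulo $\Gamma$, the Mal'cev coordinates of the resulting point are \emph{generalised} polynomials in the coefficients of $p$ --- they involve iterated integer parts --- so membership in a semialgebraic cell is not a single bounded family of polynomial inequalities in the parameters. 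One must first control the number of ``branches'' introduced by the integer parts, and that control is precisely the content of the parametric generalised polynomial machinery (Theorem 15.3 of \cite{AdamczewskiKonieczny-2022}, used here via Propositions \ref{prop:param-reduce}, \ref{prop:param-reduce-bdd} and Corollary \ref{cor:bracket_word_along_polys}). The paper in fact avoids nilmanifolds entirely in the proof of Theorem \ref{thm:A} and works combinatorially with parametric {\gp} maps; a naive Warren bound does not substitute for this.

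The second gap is the error-term analysis, which you correctly flag as ``the genuinely delicate point'' but do not resolve. Without a mechanism, the number of possible error patterns $e_N(h)=\ip{f(N+h)}-\ip{P_{N,\ell}(h)}$ could a priori be $3^{\Theta(H)}$, which destroys the bound. The paper's mechanism (Theorem \ref{thm:error}) is a Weyl-type dichotomy for the Taylor polynomial modulo $1$ (Proposition \ref{pr_approx}): either $(P_{N,\ell}(h))_{h<H}$ has small discrepancy, in which case $e_N$ is nonzero at only $O(H^{1-\eta_0})$ positions (sparse), or the coefficients are close to rationals with small denominator, in which case $e_N$ is constant on $O(H^{\eta})$ arithmetic progressions (structured, via a monotonicity count for $f-P_{N,\ell}$). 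Both cases yield $\exp(O(H^{\delta}))$ patterns; small $N$ (relative to $H$) are handled separately and contribute only polynomially many words. Note also that one cannot ``choose $H\asymp m^{\delta}$'': the subword complexity requires a single $H$ uniformly over all starting points $m$, so the small-$m$ regime must be counted explicitly rather than excluded.
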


The study of (special) automatic sequences along Piatetski-Shapiro sequences $\floor{ n^c }$ has a long history. 
We mention results by C. \ Mauduit and J. \ Rivat~\cite{Mauduit1995, Mauduit2005}, by J.-M. \ Deshouillers, M. \ Drmota, and J. \ Morgenbesser~\cite{Deshouillers2012}, by L. \ Spiegelhofer~\cite{Spiegelhofer2015,Spiegelhofer2020} and by L. \ Spiegelhofer and the second-named author~\cite{Muellner2017a}.
Interestingly there can appear two very different situations: 
On the one hand, the Thue-Morse sequence along Piatetski-Shapiro sequences (for $1<c<3/2$) is normal --- in particular it has maximal subword complexity.
On the other hand, synchronizing automatic sequences along Piatetski-Shapiro sequences are very far from normal - they have subexponential subword complexity.
One natural generalization of automatic sequences are morphic sequences. These are letter-to-letter codings of fixed points of substitutions.
A very prominent morphic sequence is the Fibonacci word which is the fixed point of the substitution $0 \mapsto 01, 1 \mapsto 0$.
Moreover, this sequence is also a Sturmian word and many interesting morphic sequences are also Sturmian words (see for example~\cite{Klouda2018}).
Thus, we obtain as a very special case (one of) the first results for morphic sequences along Piatetski-Shapiro sequences.

It follows from Theorem \ref{thm:A} that the sequence $(a(\ip{f(n)})_{n = 0}^\infty$ is deterministic, meaning that it has subexponential subword-complexity. A conjecture of Sarnak \cite{Sarnak2011} asserts that each deterministic sequence should be orthogonal to the M\"obius function, given by
\[
	\mu(n) = 
	\begin{cases}
		(-1)^k &\text{if $n$ is the product of $k$ distinct primes;}\\
		0 &\text{if $n$ is divisible by a square.}
	\end{cases}
\]
This conjecture in general is wide open. However, it has been resolved
in a number of special cases \cite{Bourgain2013, Bourgain2013a, Deshouillers2015, Downarowicz2015, ElAbdalaoui2016, ElAbdalaoui2014, Ferenczi2016, GreenTao-2012-Mobius, Green2012, KulagaPrzymus2015, Liu2015, Mauduit2010, Mauduit2015, Muellner2017, Peckner2018, Veech2016}, see also the recent survey articles \cite{Drmota,Ferenczi2018}. Of particular importance to the current paper is  M\"obius orthogonality for nilsequences \cite{GreenTao-2012-Mobius}, which was recently strengthened to short intervals \cite{MSTT}. As we discuss later in the paper, this is closely connected to bracket words thanks to the work of Bergelson and Leibman \cite{BergelsonLeibman-2007}.
Our second result is the M\"obius orthogonality for bracket words along Hardy field functions.
\begin{alphatheorem}\label{thm:B}
	Let ${\bb a} = (a(n))_{n \in \ZZ}$ be a (two-sided) $\RR$-valued bracket word and let $f \colon \RR_+ \to \RR$ be a Hardy field function with polynomial growth. Then 
\begin{equation}\label{eq:intro:thm-B}
	\frac{1}{N} \sum_{n=1}^N \mu(n) a(n) \to 0 \text{ as } N \to \infty.
\end{equation}
\end{alphatheorem}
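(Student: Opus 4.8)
Set $b(n):=a(\ip{f(n)})$; the goal is $N^{-1}\sum_{n=1}^{N}\mu(n)b(n)\to0$. One may assume $f(t)\to\infty$, since otherwise $\ip{f(n)}$ is eventually constant ($f$ being a Hardy field function, $f$ and hence $f-\ip{f}$ are eventually monotone) and the claim is trivial. The plan rests on three inputs: the Bergelson--Leibman representation of bounded \genpoly{}s as nilsequences \cite{BergelsonLeibman-2007}, so that $a(m)=F(g(m)\Gamma)$ for all $m$, where $X=G/\Gamma$ is a fixed nilmanifold, $(g(m))$ a polynomial sequence in $G$, and $F\colon X\to\RR$ a fixed Riemann-integrable function; the fact --- also the key to Theorem~\ref{thm:A} --- that on short intervals $f$ can be replaced by a polynomial, turning $b$ into a bounded-complexity nilsequence in $n$; and M\"obius orthogonality of nilsequences in short intervals on average \cite{MSTT}. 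Passing to short intervals is unavoidable, since $(\ip{f(n)})_{n}$ is a polynomial orbit on a nilmanifold only when $f$ is (essentially) a polynomial, so Green--Tao's orthogonality on $[1,N]$ does not apply directly.

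\textbf{Localisation.} I would fix a large integer $d$ (to be calibrated against the growth of $f$) and a parameter $\eta\in(0,1)$, partition $(0,N]$ into dyadic ranges $(X,2X]$ ($X=N,N/2,\dots$), and partition each such range into $\ll X/H$ consecutive intervals $I$ of length $H:=\ceil{X^{\eta}}$. Let $P_I$ be the degree-$d$ Taylor polynomial of $f$ at the left endpoint of $I$. Since $f$ lies in a Hardy field and has polynomial growth, $f^{(k)}(t)=O(t^{C-k+\e})$ for a fixed $C=C(f)$ and every $\e>0$, whence
\[
\sup_{n\in I}\abs{f(n)-P_I(n)}\ \ll\ H^{d+1}\,\sup_{I}\abs{f^{(d+1)}}\ \ll\ X^{(d+1)\eta+C-d-1+\e}\ =:\ \delta_X ,
\]
and, taking $\e$ small, $\delta_X\to0$ provided $\eta<1-\tfrac{C}{d+1}$. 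Choosing $d$ large makes this threshold approach $1$, so I can fix $\eta$ strictly between the short-interval exponent $\theta_0<1$ of \cite{MSTT} (which stays bounded away from $1$ for the nilsequences that will arise) and $1-\tfrac{C}{d+1}$.

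\textbf{Reduction and summation.} On each $I$ I claim that $a(\ip{f(n)})$ agrees, off an exceptional set $E_I\subseteq I$, with a nilsequence $n\mapsto\tilde F(\tilde g_I(n)\tilde\Gamma)$ on a fixed nilmanifold $\tilde X=\tilde G/\tilde\Gamma$ with fixed Riemann-integrable $\tilde F$ (both depending only on $a$ and $d$), where $\tilde g_I$ has bounded complexity: indeed, on $I\setminus E_I$ one has $\ip{f(n)}=\ip{P_I(n)}$ (or $\ip{f(n)}$ equals a suitable integer polynomial --- see below), $\ip{P_I(n)}$ is a \genpoly{} of degree $\le d$ in $n$, and composing with the Bergelson--Leibman data for $a$ produces the asserted nilsequence. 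Granting that $\sum_I\abs{E_I}=o(N)$, we get
\[
\sum_{n=1}^{N}\mu(n)b(n)\ =\ \sum_{\text{dyadic }X}\ \sum_{I\subseteq(X,2X]}\ \sum_{n\in I}\mu(n)\,\tilde F(\tilde g_I(n)\tilde\Gamma)\ +\ O\Bigl(\sum_I\abs{E_I}\Bigr).
\]
After the standard replacement of the Riemann-integrable $\tilde F$ by Lipschitz minorants and majorants --- whose error is controlled via Green--Tao's factorisation theorem for polynomial orbits --- \cite{MSTT} gives $\sum_{I\subseteq(X,2X]}\bigl|\sum_{n\in I}\mu(n)\tilde F(\tilde g_I(n)\tilde\Gamma)\bigr|=o(X)$, uniformly over the bounded-complexity nilsequences in play; summing geometrically over the dyadic ranges yields $o(N)$, and the theorem follows.

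\textbf{The main obstacle.} The crux is the bound $\sum_I\abs{E_I}=o(N)$. On an interval $I$ where $P_I$ stays $\delta_X$-far from every integer-valued polynomial of degree $\le d$, a Weyl-type equidistribution estimate for polynomial sequences shows $\#\{n\in I:\norm{P_I(n)}<\delta_X\}=o(H)$ (with $\norm{\cdot}$ the distance to $\ZZ$), which is more than enough. The delicate case is when $P_I$ is $\delta_X$-close on $I$ to an integer-valued polynomial $Q_I$: there one writes $\ip{f(n)}=Q_I(n)+\ip{f(n)-Q_I(n)}$ and uses the \emph{eventual sign-definiteness} of $f-Q_I$ --- an elementary but essential property of Hardy field functions --- to conclude that $\ip{f(n)-Q_I(n)}$ is constant on $I$ once $I$ lies far enough out, so that $\ip{f(n)}$ coincides on all of $I$ with a single integer polynomial and $E_I=\emptyset$. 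The few intervals $I$ on which $f-Q_I$ changes sign (there is essentially one per relevant scale, $Q_I$ varying with $I$ only through a slowly growing parameter) contribute $\ll H$ to each dyadic range, hence $o(N)$ overall. Assembling all of this --- in particular, producing a \emph{single} nilmanifold and Riemann-integrable function, together with bounded-complexity polynomial sequences, valid uniformly over every $I$ and every case --- is where the structure theory of \genpoly{}s and the Hardy-field estimates behind Theorem~\ref{thm:A} are needed in full; this bookkeeping, rather than any single hard inequality, is the main burden of the proof.
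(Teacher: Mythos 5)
Your overall strategy coincides with the paper's: dyadic/short-interval localisation, Taylor approximation of $f$, Bergelson--Leibman representation of $a(\ip{P(h)})$ on a single nilmanifold uniformly over polynomials $P$ of bounded degree, and the short-interval M\"obius orthogonality of \cite{MSTT} combined with the Green--Tao factorisation theorem. However, there are two genuine gaps. First, your dichotomy for the exceptional set $E_I$ is wrong: you claim that if $P_I$ stays far from every \emph{integer-valued} polynomial then $\#\{n\in I:\norm{P_I(n)}<\delta_X\}=o(H)$. This fails already for $P_I(n)=n/2$, which is $\tfrac12$-far from every integer-valued polynomial of bounded degree on a long interval, yet $\norm{P_I(n)}=0$ for half of all $n$. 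The correct Weyl dichotomy (Proposition~\ref{pr_approx}) is between small discrepancy and closeness to a \emph{rational} polynomial with denominator $q\ll\delta^{-O(1)}$; the latter case forces a further decomposition of $I$ into arithmetic progressions modulo $q$, on each of which one gets $P_I(qh+m)=Q(h)+r(h)$ with $Q$ integral but $r$ only bounded by $\e^{-O(1)}$ (not by $1$), so that $e_N$ is nonzero on a structured, possibly positive-density set. Controlling this requires the monotonicity-counting argument for $r$ and $r+R_{N,\ell}$ (Lemma~\ref{lem:mono} and the "structured" alternative of Theorem~\ref{thm:error}), and it forces the short-interval M\"obius estimate to hold uniformly over sub-progressions of $[H]$ (the $1_P(h)$ weight in Theorem~\ref{thm:Mobius:semialg}), which your formulation omits. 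Your claim that the delicate case contributes only $O(H)$ exceptional points per dyadic scale is unsupported, since $Q_I$ genuinely varies with $I$ and sign-definiteness of $f-Q_I$ is only "eventual" with a threshold depending on $Q_I$.

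Second, the passage from Lipschitz test functions (as required by \cite{MSTT}) to the finitely-valued piecewise polynomial $F$ produced by Bergelson--Leibman is not the "standard replacement by Lipschitz minorants and majorants" you describe; it is the main technical content of the paper (Section~\ref{sec:short-int}). The sandwiching error $\EE_h F'(g(h)\Gamma)$ is only small when $(g(h)\Gamma)_h$ is equidistributed, so one must first factorise $g=\e g'\gamma$; but then, precisely because $1_S$ is discontinuous, one cannot replace the smooth part $\e(h)$ by a constant on sub-intervals as in \cite{GreenTao-2012-Mobius}. The paper instead runs an algebraic argument, realising $h\mapsto 1_S(\e(h)g'(h)\gamma_j\Gamma)$ as the indicator of a semialgebraic set in an extended space $(\RR/\ZZ)\times(G/\Gamma)$ evaluated along $(h/H,\ttilde{g}(h)\Lambda)$ for an equidistributed polynomial sequence on a rational subgroup, and it needs a quantitative bound (Lemma~\ref{lem:Mobius:poly}) on the measure of the set where a polynomial in Mal'cev coordinates is small in order to make the boundary term $\int F'$ quantitatively negligible. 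Without these two ingredients your argument does not close.
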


\begin{remark}
We point out that using similar techniques, it is possible to obtain a slightly stronger result. Firstly, instead of the bracket word, we could work with a bounded generalised polynomial; in fact, each bounded generalised polynomial can be approximated in the supremum norm by finitely-valued ones, which allows for a straightforward reduction. Secondly, since all of the key ingredients in the proof of Theorem \ref{thm:B} are quantitative, one can obtain explicit rate of convergence to $0$ in \eqref{eq:intro:thm-B}. We leave the details to the interested reader.
\end{remark}

Theorem \ref{thm:B} is closely related to M\"{o}bius orthogonality for nilsequences, that is, sequences that can be obtained by evaluating a continuous function along an orbit of a point in a nilsystem. The connection between generalised polynomials and nilsequences was established by Bergelson and Leibman \cite{BergelsonLeibman-2007}, who showed that bounded generalised polynomials can be represented by evaluating a piecewise polynomial function along an orbit in a nilsystem (see Theorem \ref{thm:BL} for details). 

The fact that nilsequences are orthogonal to the M\"obius function was established by Green and Tao \cite{GreenTao-2012-Mobius} as a part of their program of understanding additive patterns in the primes. In fact, \cite{GreenTao-2012-Mobius} already contains an outline of the proof of M\"{o}bius orthogonality for bounded generalised polynomials, although some technical details are left out.

In order to obtain a result for a bracket word along a Hardy field function, we split the range of summation into intervals where the Hardy field function under consideration can be efficiently approximated by polynomials. We are then left with the task of establishing cancellation in each of these intervals. A key ingredient is M\"obius orthogonality for nilsequences in short intervals, recently established in \cite{MSTT}, Theorem \ref{thm:Mobius:MSTT}. The main technical difficulty of our argument lies in extending Theorem \ref{thm:Mobius:MSTT} to piecewise constant (and hence necessarily not continuous) functions with semialgebraic pieces, which we accomplish in Section \ref{sec:short-int}.

\subsection{Plan of the paper} 
In Section~\ref{sec:hardy} we recall some basic definitions and results about Hardy fields. Moreover, we study Taylor polynomials of functions from a Hardy field which generalizes the corresponding part in~\cite{Deshouillers2022}. This allows us to locally replace functions from a Hardy field with polynomials. Thus, we need to be able to work with polynomials with varying coefficients.
To do so, we study in Section~\ref{sec:genpoly} parametric generalised polynomials which builds on and refines results obtained in~\cite{AdamczewskiKonieczny-2022}. These tools allow us to prove Theorem~\ref{thm:A}.
In Section~\ref{sec:nilmanifolds} we present some basics on nilmanifolds and discuss the connection to generalized polynomials.
Then, in Section~\ref{sec:mobius} we recall a result on M\"{o}bius orthogonality for nilsequences in short intervals. This is the final result that we need to prove Theorem~\ref{thm:B}. One naturally arising difficulty is to translate the result on M\"obius orthogonality for smooth functions to piecewise polynomial functions instead.

\subsection*{Notation}
We use $\NN = \{1,2,\dots\}$ to denote the set of positive integers and $\NN_0 = \NN \cup \{0\}$. For $N \in \NN$, we let $[N] = \{0,1,\dots,N-1\}$. 
For a non-empty finite set $X$ and a map $f \colon X \to \RR$, we use the symbol $\EE$ borrowed from probability theory to denote the average
$ \EE_{x \in X} f(x) = \frac{1}{\abs{X}} \sum_{x \in X} f(x)$.

\subsection*{Acknowledgements}
The authors wish to thank Michael Drmota for many insightful discussions, for suggesting this problem, and also for inviting the first-named author to Vienna for a visit during which this project started; and Fernando Xuancheng Shao for helpful comments on M\"obius orthogonality of nilsequences.

The first-named author works within the framework of the LABEX MILYON (ANR-10-LABX-0070) of Universit\'e de Lyon, within the program "Investissements d'Avenir" (ANR-11-IDEX-0007) operated by the French National Research Agency (ANR).
 The second-named author is supported by the Austrian-French project “Arithmetic Randomness” between FWF and ANR (grant numbers I4945-N and ANR-20-CE91-0006).

\section{Hardy fields}\label{sec:hardy}
In this section we discuss functions from a Hardy field which have polynomial growth. In particular we study how the Taylor-polynomial of $f$ can be used to describe $\ip{f(n)}$.
Therefore, we first gather some basic results on Hardy fields. Then we discus the uniform distribution of polynomials modulo $\Z$.
Finally, we study properties of Taylor polynomials and prove the main theorem of this section, namely Theorem~\ref{thm:error}.

\subsection{Preliminaries}
We start by gathering the basic facts and results on Hardy fields. For further discussion we refer e.g.\ to \cite{Boshernitzan-1994} and \cite{Frantzikinakis-2009}.

Let $\mathcal{B}$ be the collection of equivalence classes of real valued functions
defined on some half line $(c, \infty)$, where we identify two functions if they agree eventually.\footnote{The equivalence classes just defined are often called \emph{germs of functions}. We choose to refer to elements of $\mathcal{B}$ as functions instead, with the understanding that all the operations defined and statements made for elements of $\mathcal{B}$ are considered only for sufficiently large values of $t \in \R$.} A \emph{Hardy field} $H$ is a subfield of the ring $(\mathcal{B}, + , \cdot)$ that is closed under differentiation, meaning that $H$ is a subring of $\mathcal{B}$ such that for each $0 \neq f \in H$, the inverse $1/f$ exists and belongs to $H$, $f$ is differentiable and $f' \in H$. We let $\cH$ denote the union of all Hardy fields. 
If $f \in \cH$ is defined on $[0,\infty)$ (one can always choose such
a representative of $f$) we call the sequence $(f(n))_{n=0}^\infty$ a \emph{Hardy sequence}.

We note that choosing different representatives of the same germ of a function $f$, changes the number of subwords of length $N$ of $a(\ip{f(n)})$ by at most an additive constant. As a consequence, the asymptotic behaviour of the subword complexity of $a(\ip{f(n)})$ depends only on the germ of $f$.

A \emph{logarithmic-exponential function} is any real-valued function on a half-line $(c,\infty)$ that can be constructed from the identity map $t \mapsto t$ using basic arithmetic operations $+,-,\times, :$, the logarithmic and the exponential functions, and real constants. For example, $ t^2 + 5t, t^{\sqrt{2}+\sqrt{3}}, e^{(\log t)^2}$ and $e^{\sqrt{\log t}}/\sqrt{t^2+1}$ are all logarithmic-exponential functions.
Every logarithmic-exponential functions belongs to $\cH$, and so do some other classical functions such as $\Gamma$, $\zeta$ or $t \mapsto \sin(1/t)$.

For real-valued functions $f$ and $g$ on $(c,\infty)$ such that $g(t)$ is non-zero for sufficiently large $t$, we write $f(t) \prec g(t)$ if $\lim_{t\to \infty} f(t) / g(t) = 0$, $f(t) \sim g(t)$ if $\lim_{t\to \infty} f(t) / g(t)$ is a non-zero real number and $f(t) \ll g(t)$ if there exists $C>0$ such that $\abs{f(t)} \leq C\abs{g(t)}$ for all large $t$. For completeness, we let $0 \sim 0$ and $0 \ll 0$. 

We state the following well-known facts as lemmas.
\begin{lemma}\label{lem:Hardy_event}
	Let $f \in \cH$ be a function that is not eventually zero. Then $f$ is eventually strictly positive or negative.
	If $f$ is not eventually constant, then $f$ is eventually strictly monotone.
\end{lemma}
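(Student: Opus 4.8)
The final statement is Lemma~\ref{lem:Hardy_event}: for $f \in \cH$ not eventually zero, $f$ is eventually of constant sign; and if $f$ is not eventually constant then $f$ is eventually strictly monotone.

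The plan is to exploit the field structure of a Hardy field together with closure under differentiation, which together force strong regularity at infinity. First I would prove the sign statement. Since $f \in \cH$, there is a Hardy field $H$ with $f \in H$. If $f$ is not eventually zero, then on any half-line where we work we may pass to a representative; the key point is that $1/f$ exists as an element of $H$, which already forces $f$ to be \emph{nowhere zero} on some half-line $(c,\infty)$ (otherwise $1/f$ could not be a well-defined germ of a function on a half-line). A continuous function that is nowhere zero on an interval has constant sign there by the intermediate value theorem; and Hardy field elements are continuous eventually (being differentiable). Hence $f$ is eventually strictly positive or eventually strictly negative. The only subtlety is justifying continuity and the nonvanishing of a representative — but this is immediate from the definition of a Hardy field, since $f$ differentiable on $(c,\infty)$ is in particular continuous there, and $1/f \in H \subseteq \mathcal{B}$ means $1/f$ has a representative defined on a half-line, forcing $f \neq 0$ on that half-line.

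Second, for the monotonicity statement, I would apply the sign statement to the derivative $f'$, which lies in $H$ because $H$ is closed under differentiation. If $f$ is not eventually constant, then $f'$ is not eventually zero: indeed, if $f'$ were eventually zero on some $(c,\infty)$, then $f$ would be constant on $(c,\infty)$, contradicting the hypothesis. (Here one uses that $f'$ eventually zero genuinely means $f' \equiv 0$ on a half-line, not merely vanishing at isolated points, so the mean value theorem gives $f$ constant there.) By the first part applied to $f' \in \cH$, $f'$ is eventually strictly positive or eventually strictly negative, say on $(c',\infty)$. Then for $c' < s < t$ the mean value theorem gives $f(t) - f(s) = f'(\xi)(t-s)$ for some $\xi \in (s,t)$, which has the sign of $f'$ and is nonzero; hence $f$ is strictly increasing (resp.\ strictly decreasing) on $(c',\infty)$.

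The main obstacle — really the only place requiring care — is the precise interpretation of "eventually zero" versus "zero on a half-line" for elements of $\mathcal{B}$, and the corresponding point for $f'$. One must observe that an element of a Hardy field which vanishes on an unbounded set need not be the zero germ in general, but if $1/f$ is a legitimate element of $\mathcal{B}$ then $f$ cannot vanish at any point of some half-line, which is exactly what the argument needs; and symmetrically, "$f$ not eventually constant" must be unpacked as: there is no half-line on which $f$ is constant, equivalently $f' \not\equiv 0$ on any half-line, which is what lets us feed $f'$ into the first part. Once these bookkeeping points about germs are settled, both claims follow from the intermediate value theorem and the mean value theorem with no computation.
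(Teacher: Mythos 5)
Your proposal is correct and follows exactly the paper's argument: the first part uses the existence of $1/f$ in the Hardy field to get eventual nonvanishing and then continuity (intermediate value theorem) to get constant sign, and the second part applies the first to $f' \in H$ together with the mean value theorem. Your write-up simply makes explicit the germ-versus-representative bookkeeping that the paper leaves implicit.
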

\begin{proof}
	Since $f$ is not eventually $0$, there exists the inverse function $1/f$ --- in particular, $f(t) \neq 0$ for $t$ large enough. Now, the first part follows from continuity of $f$. The second part follows directly from the first part by considering $f'$.
\end{proof}

\begin{lemma}\label{lem:Hardy:trichotomy}
	Let $H$ be a Hardy field and let $f,g \in H$. Then one of the following holds: $f \prec g$, $f \sim g$ or $f \succ g$.
\end{lemma}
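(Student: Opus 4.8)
The plan is to reduce the statement to the behaviour of the single germ $h := f/g$ and then invoke Lemma~\ref{lem:Hardy_event}. First I would dispose of the degenerate cases in which $f$ or $g$ is eventually zero. If $g$ is eventually zero and $f$ is not, then $g(t)/f(t) \to 0$, so $g \prec f$, i.e. $f \succ g$; symmetrically, if $f$ is eventually zero and $g$ is not, then $f \prec g$; and if both are eventually zero then $f \sim g$ by the convention $0 \sim 0$. Hence I may assume that neither $f$ nor $g$ is eventually zero, and then Lemma~\ref{lem:Hardy_event} guarantees that each of them is eventually of constant sign, in particular eventually nonzero, so that $h := f/g$ is a well-defined nonzero element of the field $H$.

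Next I would analyse $h$. By Lemma~\ref{lem:Hardy_event}, $h$ is either eventually constant or eventually strictly monotone; in both cases the limit $L := \lim_{t\to\infty} h(t)$ exists in $[-\infty,+\infty]$, since a constant germ and an eventually monotone germ on a half-line both have a limit in the extended real line. I then split into three cases according to the value of $L$. If $L = 0$ then $f(t)/g(t) \to 0$, which is exactly $f \prec g$. If $L$ is a nonzero real number then $f \sim g$ by definition. Finally, if $L = \pm\infty$, then $1/h = g/f$ is again a nonzero element of the field $H$ and $g(t)/f(t) \to 1/L = 0$, so $g \prec f$, i.e. $f \succ g$. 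These three possibilities are exhaustive and pairwise exclusive, which is precisely the asserted trichotomy.

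I do not expect any genuine obstacle here: the whole argument is bookkeeping layered on top of Lemma~\ref{lem:Hardy_event}, the only point worth stating explicitly being that an element of a Hardy field which is eventually monotone (or eventually constant) necessarily has a limit in $[-\infty,+\infty]$.
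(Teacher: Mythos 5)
Your argument is correct and is essentially the paper's proof: both reduce to the quotient $f/g$, use the eventual monotonicity of Hardy field elements (Lemma~\ref{lem:Hardy_event}) to obtain a limit in the extended reals, and read off the trichotomy from whether that limit is zero, finite nonzero, or infinite. Your treatment of the degenerate cases where $f$ or $g$ is eventually zero is slightly more explicit than the paper's ``the situation is trivial,'' but it is the same argument.
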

\begin{proof}
	If $g$ is eventually zero, the situation is trivial, so assume that this is not the case. Since $f/g$ is eventually monotone, the limit $\lim_{t \to \infty} \abs{f(t)}/\abs{g(t)} \in \RR \cup \{\infty\}$ exists. If the limit is infinite then $f \succ g$. If the limit is zero then $f \prec g$. If the limit is finite and non-zero then $f \sim g$.
\end{proof}

\begin{definition}
	We say that $f$ has \emph{polynomial growth} if there exists $n \in \N$ such that $f(t) \prec t^n$.
\end{definition}

We will make use of the following estimates for the derivatives of functions with polynomial growth.
\begin{lemma}[{\cite[Lem.\ 2.1]{Frantzikinakis-2009}}]\label{lem:f'(t)-cases}
	Let $f \in \mathcal{H}$ be a function with polynomial growth. Then at least one of the following holds:
	\begin{enumerate}
\item $f(t) \prec t^{-n}$ for all $n \in \NN$;
	\item $f(t) \to c \neq 0$ as $t \to \infty$ for some constant $c$;
	\item $f(t)/(t (\log t)^2)\prec f'(t) \ll f(t)/t$.
	\end{enumerate}
\end{lemma}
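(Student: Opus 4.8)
The plan is to read off the three alternatives from the eventual behaviour of the logarithmic derivative $u(t):=tf'(t)/f(t)$. First I would dispose of the degenerate cases. If $f$ is eventually $0$, then (i) holds trivially; otherwise $1/f\in H$, so $f$ has no zeros for large $t$, and by Lemma~\ref{lem:Hardy_event} we may---replacing $f$ by $-f$, which leaves all three conclusions unchanged---assume $f(t)>0$ for large $t$. If moreover $f'$ is eventually $0$, then $f$ is eventually a nonzero constant and (ii) holds, so we may assume that $f'$ too has constant sign eventually. Finally, since Lemma~\ref{lem:Hardy:trichotomy} only compares functions lying inside one common Hardy field, I would enlarge $H$ so that it contains $t$, $\log t$, and hence $(\log t)^2$; this is permitted by the standard extension theorems for Hardy fields (cf.\ \cite{Boshernitzan-1994}).

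Put $u:=tf'/f\in H$. By Lemma~\ref{lem:Hardy_event}, $u$ is eventually monotone or constant, so the limit $\ell:=\lim_{t\to\infty}u(t)$ exists in $[-\infty,+\infty]$, and the proof splits according to $\ell$. If $\ell=+\infty$, then $f'(t)/f(t)>n/t$ eventually for every $n$, so $\log f(t)-n\log t$ is eventually increasing and hence bounded below, giving $f(t)\geq c_n t^n$ for large $t$; as this holds for every $n$ it contradicts the polynomial growth of $f$, so this case cannot occur. If $\ell=-\infty$, then symmetrically $f'(t)/f(t)<-(n+1)/t$ eventually for every $n$, so $\log f(t)+(n+1)\log t$ is eventually decreasing and hence bounded above, giving $f(t)\ll t^{-(n+1)}$ and therefore $f(t)\prec t^{-n}$; as this holds for every $n$, we are in alternative (i).

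It remains to treat $\ell\in\RR$. If $\ell\neq 0$, then $u$ is bounded, so $f'(t)\ll f(t)/t$, and
\[
	\frac{f(t)/(t(\log t)^2)}{f'(t)}=\frac{1}{(\log t)^2\, u(t)}\longrightarrow 0 ,
\]
since $u(t)\to\ell\neq 0$; hence $f(t)/(t(\log t)^2)\prec f'(t)$, and together with the previous estimate this is alternative (iii). If $\ell=0$, then still $f'(t)\ll f(t)/t$. I would then compare $g:=\abs{f'/f}\in H$ with $h(t):=1/(t(\log t)^2)\in H$ via Lemma~\ref{lem:Hardy:trichotomy}. If $g\succ h$, then $t(\log t)^2 g(t)\to\infty$, which forces $f(t)/(t(\log t)^2)\prec f'(t)$ (the sign of $f'$ being fixed), so alternative (iii) holds. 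Otherwise $g\prec h$ or $g\sim h$, so $g\ll h$; since $\int_{t_0}^{\infty}h(s)\,ds<\infty$, also $\int_{t_0}^{\infty}g(s)\,ds<\infty$, whence $\log f(t)=\log f(t_0)+\int_{t_0}^{t}f'(s)/f(s)\,ds$ converges to a finite limit and $f(t)$ tends to a nonzero constant, so alternative (ii) holds.

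The step that requires the most care is the enlargement of the Hardy field so that $t$ and $(\log t)^2$ become available for Lemma~\ref{lem:Hardy:trichotomy} (applied to $u$ against the constants, and to $g$ against $h$); everything else reduces to the two elementary integrations of $f'/f$ and to keeping track of signs in the relations $\prec$ and $\ll$.
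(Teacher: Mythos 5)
Your argument is correct. Note that the paper does not prove this lemma at all: it is imported verbatim as \cite[Lem.\ 2.1]{Frantzikinakis-2009}, so there is no in-paper proof to compare against. Your derivation via the logarithmic derivative $u=tf'/f$ is a complete, self-contained proof along the standard lines: the reduction to $f>0$ with $f'$ of constant sign is handled properly, the cases $\ell=\pm\infty$ are correctly dispatched by integrating $f'/f$ against $n\log t$, the case $\ell\in\RR\setminus\{0\}$ gives alternative (iii) immediately since $(\log t)^2 u(t)\to\infty$ in absolute value, and the delicate case $\ell=0$ is resolved by the trichotomy of $\abs{f'/f}$ against the integrable comparison function $1/(t(\log t)^2)$, with convergence of $\int f'/f$ yielding alternative (ii) in the non-dominant subcase. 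The one step you rightly flag --- enlarging the Hardy field so that $t$ and $(\log t)^2$ are available for Lemma \ref{lem:Hardy:trichotomy} --- is exactly the extension property the paper itself invokes in the remark following Lemma \ref{lem:f'(t)<<t^(k-1)}, so it is available on the paper's own terms. I see no gap.
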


\begin{lemma}\label{lem:f(t)<<t^(-omega)}
	Let $f \in \cH$ be a function such that $f(t) \prec t^{-n}$ for all $n \in \NN$. Then also $f^{(\ell)}(n) \prec t^{-n}$ for all $\ell,n \in \NN$.
\end{lemma}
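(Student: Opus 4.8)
The plan is to argue by induction on $\ell$, with the case $\ell = 0$ being the hypothesis. For the inductive step it suffices to treat $\ell = 1$: if we know that $f(t) \prec t^{-n}$ for all $n$ implies $f'(t) \prec t^{-n}$ for all $n$, then applying this to $f', f'', \dots$ in turn gives the full statement (note that $f^{(\ell)} \in \cH$ for every $\ell$, since $\cH$ is closed under differentiation). So the entire content is: \emph{if $f \in \cH$ satisfies $f(t) \prec t^{-n}$ for all $n \in \NN$, then $f'(t) \prec t^{-n}$ for all $n \in \NN$.}

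To prove this, first observe that $f'$ also has polynomial growth: indeed $f' \in \cH$, and if $f'$ did not have polynomial growth then by Lemma~\ref{lem:Hardy:trichotomy} we would have $f'(t) \succ t^{m}$ for every $m$, whence by monotonicity/integration $f(t) \succ t^{m}$ for every $m$, contradicting $f(t) \prec t^{-1}$. Now apply Lemma~\ref{lem:f'(t)-cases} to $f'$ (not to $f$). This yields three cases. In case (i), $f'(t) \prec t^{-n}$ for all $n$, which is exactly what we want. Case (ii), $f'(t) \to c \neq 0$, is impossible: it would force $f(t) \sim ct$, again contradicting the hypothesis that $f$ decays. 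Case (iii) says $f''(t) \ll f'(t)/t$ and in particular $f'(t) \succ f''(t) t (\log t)^2 \succ 0$; the key consequence we extract is that $f'$ is eventually of one sign and eventually monotone (by Lemma~\ref{lem:Hardy_event}), and moreover $f'(t)$ does not tend to $0$ faster than any power would require us to check. More usefully, in case (iii) the estimate $f'(t) \ll f(t)/t$ does \emph{not} directly apply to $f'$, but the analogous relation applied to $f$ itself: since $f$ has polynomial growth and $f \prec t^{-n}$ for all $n$, $f$ falls into case (i) of Lemma~\ref{lem:f'(t)-cases}. The cleanest route is therefore: apply Lemma~\ref{lem:f'(t)-cases} to $f$ directly; cases (ii) and (iii) are both excluded by $f \prec t^{-n}$ for all $n$ (case (ii) forces a nonzero limit, case (iii) forces $f \succ 1/(t(\log t)^2 \cdot t^{k})$-type lower bounds via $f(t) \succ f'(t) t(\log t)^2$ iterated, contradicting arbitrarily fast decay after finitely many steps). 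Wait — case (iii) gives $f'(t) \ll f(t)/t$, which is an \emph{upper} bound on $f'$ and is compatible with fast decay; the lower bound $f'(t) \succ f(t)/(t(\log t)^2)$ only tells us $f'$ doesn't vanish, but combined with $f(t)\prec t^{-n}$ it gives $f'(t) \ll f(t)/t \prec t^{-n-1}$ for all $n$, which is again what we want. So in fact \emph{in every surviving case we get the conclusion directly}: case (i) gives it outright, and case (iii) gives $f'(t) \ll f(t)/t \prec t^{-n}$ for all $n$ since $f(t) \prec t^{-n-1}$ for all $n$.

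To summarize the argument in order: (1) reduce to the case $\ell = 1$ by induction, using closure of $\cH$ under differentiation; (2) apply Lemma~\ref{lem:f'(t)-cases} to $f$; (3) exclude case (ii) since $f(t) \to 0$; (4) in case (i) conclude immediately; (5) in case (iii), use $f'(t) \ll f(t)/t$ together with $f(t) \prec t^{-n}$ for all $n$ to conclude $f'(t) \prec t^{-n}$ for all $n$. The main (mild) obstacle is just being careful that Lemma~\ref{lem:f'(t)-cases} requires its input to have polynomial growth, which holds trivially for $f$, and verifying that case (iii)'s two-sided bound is being used only for its upper half; there is no deep difficulty here, it is a short bookkeeping argument.
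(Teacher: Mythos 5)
Your reduction to $\ell = 1$ via induction and closure of $\cH$ under differentiation is fine and matches the paper, but the case analysis that follows has a genuine gap. Lemma \ref{lem:f'(t)-cases} asserts that \emph{at least one} of the three alternatives holds; they are not exclusive, and a function satisfying (i) need not satisfy (iii). Applied to $f$, alternative (i) is precisely the hypothesis $f(t) \prec t^{-n}$ for all $n$, so the disjunction is already discharged by (i) and the lemma tells you nothing about $f'$. Your step ``in case (i) conclude immediately'' conflates case (i) for $f$ (the hypothesis) with case (i) for $f'$ (the conclusion), and you are not entitled to assume case (iii), so the upper bound $f'(t) \ll f(t)/t$ on which step (5) relies is simply unavailable. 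Concretely, $f(t) = e^{-t}$ satisfies (i) but violates the upper half of (iii), since $\abs{f'(t)} = \abs{f(t)} \not\ll \abs{f(t)}/t$. Applying the lemma to $f'$ instead (your first attempt) does not close the gap either: case (iii) for $f'$ only bounds $f''$ in terms of $f'$, not $f'$ in terms of $f$, as you yourself noted before switching back.

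The paper's proof does not use Lemma \ref{lem:f'(t)-cases} at all; it uses an integration argument that is exactly what your proposal is missing. Suppose $f'(t) \not\prec t^{-n}$ for some $n$; by Lemma \ref{lem:Hardy:trichotomy} this forces $\abs{f'(t)} \gg t^{-n}$. Since $f(t) \to 0$ and $f'$ is eventually of constant sign, one gets $\abs{f(t)} = \int_t^\infty \abs{f'(s)}\,ds \gg t^{-n+1}$, contradicting the hypothesis on $f$. Replacing your case analysis with this contradiction argument repairs the proof.
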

\begin{proof}
	Reasoning inductively, it is enough to consider the case where $\ell = 1$. Suppose, for the sake of contradiction, that $\abs{f'(t)} \gg t^{-n}$ for some $n \in \NN$. Since $f(t) \to 0$ as $t \to \infty$ and since $f$ is eventually monotone, for sufficiently large $t$ we have
	\[
		\abs{f(t)} = \int_{t}^{\infty} \abs{f'(s)} ds \gg \int_{t}^{\infty} s^{-n} ds \gg t^{-n+1},
	\]
	contradicting the assumption on $f$.
\end{proof}

\begin{lemma}\label{lem:f'(t)<<t^(k-1)}
	Let $f \in \cH$ and assume that $f(t) \ll t^{\k}$ for some $\k \in \Z$. Then $f^{(\ell)}(t) \ll t^{\k-\ell}$ for each $\ell \in \NN$.
\end{lemma}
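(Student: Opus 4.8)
The plan is to reduce to the case $\ell = 1$ by induction and then to exploit the eventual monotonicity of Hardy field functions provided by Lemma~\ref{lem:Hardy_event}. For the reduction: if the assertion is known for $\ell = 1$, then given $f \in \cH$ with $f(t) \ll t^{\k}$ we obtain $f'(t) \ll t^{\k-1}$; since $f' \in \cH$ (Hardy fields are closed under differentiation) and $\k - 1 \in \ZZ$, the inductive hypothesis applied to $f'$ gives $f^{(\ell)}(t) = (f')^{(\ell-1)}(t) \ll t^{(\k-1)-(\ell-1)} = t^{\k-\ell}$ for all $\ell \ge 1$.

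So suppose $f \in \cH$ with $f(t) \ll t^{\k}$; I must show $f'(t) \ll t^{\k-1}$. If $f$ is eventually constant then $f'(t) = 0$ eventually and there is nothing to prove, so assume not. Then $f' \in \cH$ is not eventually zero, so by Lemma~\ref{lem:Hardy_event} it is eventually of one sign; replacing $f$ by $-f$ if necessary (which affects neither the hypothesis nor the conclusion, since $\abs{-f^{(j)}} = \abs{f^{(j)}}$) I may assume $f'(t) > 0$ for all large $t$. Applying Lemma~\ref{lem:Hardy_event} to $f'$, it is eventually constant or eventually strictly monotone, and I distinguish three cases. If $f'$ is eventually equal to a constant $c$, then $c > 0$ and $f(t) = ct + O(1) \sim t$; since $f(t) \ll t^{\k}$ this forces $\k \ge 1$, whence $f'(t) = c \ll 1 \ll t^{\k-1}$. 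If $f'$ is eventually strictly increasing, then for all large $t$ we have $f'(s) \ge f'(t)$ for $s \in [t, 2t]$, so
\[
	t \, f'(t) \;\le\; \int_{t}^{2t} f'(s)\, ds \;=\; f(2t) - f(t) \;\le\; \abs{f(2t)} + \abs{f(t)} \;\ll\; t^{\k},
\]
and therefore $f'(t) \ll t^{\k-1}$. If $f'$ is eventually strictly decreasing, then for all large $t$ we have $f'(s) \ge f'(t)$ for $s \in [t/2, t]$, so
\[
	\tfrac{t}{2} \, f'(t) \;\le\; \int_{t/2}^{t} f'(s)\, ds \;=\; f(t) - f(t/2) \;\le\; \abs{f(t)} + \abs{f(t/2)} \;\ll\; t^{\k},
\]
and again $f'(t) \ll t^{\k-1}$.

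The argument is short and I do not expect a genuine obstacle; the points requiring a little care are purely bookkeeping. The inequalities above are only eventual, so the dyadic intervals $[t,2t]$ and $[t/2,t]$ should be taken with $t$ large enough that $f$ is differentiable there and that the relevant monotonicity of $f'$ has set in; and one uses that $f'$, being itself a Hardy field function, is continuous, so that the fundamental theorem of calculus is available. The substance is simply that over such a dyadic block the value of $f'$ is controlled by the total variation of $f$, which is $\ll t^{\k}$ by hypothesis.
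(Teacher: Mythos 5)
Your proof is correct, but it takes a genuinely different route from the paper's. The paper also reduces to $\ell=1$ by induction, but then invokes the derivative trichotomy of Lemma~\ref{lem:f'(t)-cases} (quoted from Frantzikinakis): either $f$ decays faster than any power, or $f$ tends to a non-zero constant, or $f'(t)\ll f(t)/t$; the first case is handled by Lemma~\ref{lem:f(t)<<t^(-omega)}, the third gives the bound immediately, and the second is reduced to the others by subtracting the limit. You instead avoid that imported machinery entirely and argue from first principles: after normalising $f'>0$ via Lemma~\ref{lem:Hardy_event}, you bound $f'(t)$ by the increment of $f$ over a dyadic block, $t\,f'(t)\le\int_t^{2t}f'\ll t^{\k}$ (or the mirror-image block when $f'$ is decreasing), which is $\ll t^{\k}$ by hypothesis. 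Your case analysis is complete (eventually constant, increasing, decreasing exhaust the possibilities for $f'\in\cH$ by Lemma~\ref{lem:Hardy_event}), the constant-derivative case correctly forces $\k\ge 1$, and the factor $2^{\k}$ lost on the dyadic rescaling is harmless since $\k$ is fixed. What your approach buys is self-containedness — it needs only closure of Hardy fields under differentiation and eventual monotonicity, not the quantitative relation between $f$ and $f'$ from \cite{Frantzikinakis-2009}; what the paper's approach buys is brevity given that Lemma~\ref{lem:f'(t)-cases} is already needed and stated.
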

\begin{proof}
	Reasoning inductively, it is enough to consider the case where $\ell = 1$. We consider the three possibilities in Lemma \ref{lem:f'(t)-cases}. If $f(t) \prec t^{-n}$ for all $n \in \NN$ then the claim is trivially true by Lemma \ref{lem:f(t)<<t^(-omega)}. If $f'(t) \ll f(t)/t$ then $f'(t) \ll t^{\k-1}$, as needed. Finally, suppose that $f(t) \to c \neq 0$ as $n \to \infty$. Clearly, in this case $\k \geq 0$. We may decompose $f(t) = \bar f(t) + c$, where $\bar f(t) = f(t) - c$ and $\bar f(t) \prec 1$.
Repeating the reasoning with $\bar f$ in place of $f$ we conclude that $f'(t) = \bar f'(t) \ll t^{-1} \ll t^{\k - 1}$.
\end{proof}
\begin{remark}
For each $f \in \cH$ and each logarithmic-exponential function $g$, there exists a Hardy field $H$ such that $f,g \in H$ (see e.g.\ \cite{Boshernitzan-1994}). Hence, it follows from Lemma \ref{lem:Hardy:trichotomy} that for each $f \in \cH$ there exists $\k_0(f) \in \Z \cup \{- \infty,+\infty\}$ such that, for $\k \in \Z$ we have: $f(t) \prec t^\k$ if $\k > \k_0(f)$, $f(t) \succ t^\k$ if $\k < \k_0(f)$ and, if $k_0(f)$ is finite, $f(t)\ll t^{\k_0(f)}$. Lemma \ref{lem:f'(t)<<t^(k-1)} implies that $\k_0(f^{(\ell)}) \leq \k_0(f) - \ell$ (with the convention that $\pm \infty - \ell = \pm \infty$).
\end{remark}

\subsection{Uniform distribution of polynomials}
In this subsection we recall a result about the uniform distribution of polynomials modulo $\Z$ which we need for the next subsection about Taylor-polynomials.
It is well-known that a polynomial distributes uniformly modulo $\Z$ if and only if at least one (non-constant) coefficient is irrational.
The following proposition is a quantitative version of this statement.

First we need to specify the way we quantify how uniformly distributed a sequence $a(n) \bmod \Z$ is:
Let $( x_1, \ldots, x_N )$ be a finite sequence of real numbers. Its \emph{discrepancy} is defined by
\begin{align}\label{eq_discrepancy}
	D_N (x_1, \ldots, x_N) = \sup_{0 \le \alpha \le \beta \le 1} \biggl| \frac{\# \{ n \le N : \alpha \le \{ x_n \} < \beta \}}{N} - (\beta - \alpha) \biggr|. 
\end{align}

Thus, we have the necessary prerequisites to state the following proposition. 
\begin{proposition}[Proposition 5.2 in~\cite{Deshouillers2022}]\label{pr_approx}
	Suppose that $g: \Z \to \R$ is a polynomial of degree $d$, which we write as
	\begin{align*}
		g(n) = \beta_0 + n \beta_1 + \ldots + n^d \beta_d.
	\end{align*}
	Furthermore, let $\delta \in (0,1/2)$. Then either the discrepancy of $(g(n) \bmod \Z)_{n\in [N]}$ is smaller than $\delta$, or else there is an integer $1\leq \ell \ll \delta^{-O_d(1)}$, such that
	\begin{align*}
		\max_{1\leq j \leq d} N^j \norm{\ell \beta_j} \ll \delta^{-O_d(1)}.
	\end{align*}
\end{proposition}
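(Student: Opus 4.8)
The plan is to combine the Erd\H{o}s--Tur\'an inequality with a quantitative form of Weyl's equidistribution theorem for polynomials on the torus. First I would apply the Erd\H{o}s--Tur\'an inequality: for any $K \in \NN$,
\[
	D_N(g(0),\dots,g(N-1)) \ll \frac1K + \sum_{h=1}^{K} \frac1h \abs{\EE_{n \in [N]} e(h g(n))},
\]
where $e(x) = e^{2\pi i x}$. Choosing $K \asymp 1/\delta$, we see that if the discrepancy is at least $\delta$, then there must exist some $1 \le h \le K \ll \delta^{-1}$ for which $\abs{\EE_{n \in [N]} e(h g(n))} \gg h\delta$, and in particular $\abs{\EE_{n \in [N]} e(h g(n))} \gg \delta^{2}$. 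Note that $h g$ is again a polynomial of degree $d$, with coefficients $h\beta_0, h\beta_1, \dots, h\beta_d$, and that the constant coefficient $h\beta_0$ only contributes a unimodular phase, so it plays no role.

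The second, and main, step is a purely one-dimensional statement: if $\abs{\EE_{n \in [N]} e(P(n))} \ge \eta$ for a polynomial $P(n) = \alpha_0 + \alpha_1 n + \dots + \alpha_d n^d$ of degree $d$, then there exists an integer $1 \le q \ll \eta^{-O_d(1)}$ with $N^j \norm{q \alpha_j} \ll \eta^{-O_d(1)}$ for all $1 \le j \le d$. This is a standard quantitative form of Weyl's theorem; one may invoke it from the literature or, preferably for self-containedness, prove it by induction on $d$. In the inductive proof one applies Weyl differencing, bounding $\abs{\EE_{n}e(P(n))}^2$ by an average over shifts $\abs m < N$ of $\abs{\EE_{n \in I_m} e(P(n+m)-P(n))}$ (plus a negligible error from the varying interval lengths); the polynomial $P(n+m)-P(n)$ has degree $d-1$ in $n$ with leading coefficient $d m\alpha_d$ up to lower-order terms, so the inductive hypothesis, applied for a positive proportion of $m$, yields many $m$ together with bounded $q_m$ such that $N^{d-1}\norm{q_m\, d m\alpha_d}$ is small. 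A pigeonhole argument over these $m$, combined with the elementary fact that if $\norm{a x}$ and $\norm{b x}$ are both small then so is $\norm{\gcd(a,b)\,x}$, produces a single bounded $q$ with $N^d\norm{q\alpha_d}$ small; subtracting off the top term reduces $P$ to degree $d-1$ and one repeats to control $\alpha_{d-1},\dots,\alpha_1$ in turn.

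Finally I would apply this with $P = hg$ and $\eta \gg \delta^2$, obtaining $q \ll \delta^{-O_d(1)}$ with $N^j \norm{q h \beta_j} \ll \delta^{-O_d(1)}$ for $1 \le j \le d$. Setting $\ell = qh$ gives $1 \le \ell \le qK \ll \delta^{-O_d(1)}$ and $\max_{1 \le j \le d} N^j \norm{\ell \beta_j} \ll \delta^{-O_d(1)}$, which is exactly the desired alternative.

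The step I expect to be the main obstacle is the bookkeeping of the exponents $O_d(1)$ throughout the inductive Weyl-differencing argument: each differencing step squares the exponential sum and costs a bounded power of $\eta$, and the gcd/pigeonhole step passing from ``many shifts $m$ with small $\norm{q_m\, d m\alpha_d}$'' to ``a single $q$ with $N^d\norm{q\alpha_d}$ small'' must be carried out carefully so that $q$ stays polynomially bounded in $1/\eta$. Everything else is routine, but tracking these dependencies so that the final exponent depends only on $d$ (and is effective) requires some care.
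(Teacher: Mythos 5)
Your proposal is correct, and in fact it reconstructs the standard proof of the result that the paper does not prove at all: the paper simply quotes Proposition 5.2 of Deshouillers et al.\ and notes that it is a direct consequence of Proposition 4.3 in Green--Tao's quantitative equidistribution paper (itself proved, in the abelian case, by exactly the Weyl-sum argument you outline via their Lemma 4.5). Your route through Erd\H{o}s--Tur\'an is a clean direct bridge from discrepancy to exponential sums, whereas the cited chain goes through $\delta$-equidistribution against Lipschitz test functions; the two are interchangeable here. One small slip: from $\sum_{h \le K} h^{-1}\abs{S_h} \gg \delta$ with $K \asymp 1/\delta$ you get some $h$ with $\abs{S_h} \gg h\delta^2$, not $\gg h\delta$; this is harmless since you only use the consequence $\abs{S_h} \gg \delta^2$, and it does not affect the final $\delta^{-O_d(1)}$ exponents. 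The Weyl-differencing induction and the gcd/pigeonhole step are standard and, as you anticipate, the only real work is the exponent bookkeeping, which is carried out in the literature you would be re-deriving.
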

This proposition is a direct consequence of Proposition 4.3 in \cite{GreenTao-2012}, who attribute this result to Weyl.

\subsection{Taylor expansions}
For any germ $f \in \mathcal{H}$ we consider a representative that is defined on $[1,\infty)$ and also call it $f$. Then, for any $x \in (1,\infty)$ and $\ell \in \NN_0$ we can consider the length-$\ell$ Taylor expansion of $f$ at the point $x$,
\begin{align}
\label{eq:Taylor-1}
	f(x + y) &= P_{x,\ell}(y) + R_{x,\ell}(y),\\
\label{eq:Taylor-2}
	P_{x,\ell}(y) &:=	 f(x) + y f'(x) + \ldots + \frac{y^{\ell-1}}{(\ell-1)!} f^{(\ell-1)}(x),\\
\label{eq:Taylor-3}
	R_{x,\ell}(y) &:= \frac{y^{\ell}}{\ell!} f^{(\ell)}\bra{x + \xi_{\ell}(N,h)}, \text{ where } \xi_{\ell}(x,y) \in [0, y].
\end{align}

\begin{proposition}\label{prop:Taylor}
	Let $\k \in \Z$, $\ell \in \NN_0$, and let $f \in \cH$ be a function with $f(t) \ll t^{\k}$. Then the error term $R_{x,\ell}(y)$ in the Taylor expansion \eqref{eq:Taylor-1}--\eqref{eq:Taylor-3} satisfies
	\[
		R_{x,\ell}(y) \ll y^{\ell} x^{\k-\ell}
	\]
	uniformly for all $x\geq 1$ and $0\leq y \leq x$, where the implied constant only depends on $f$ and $\ell$.
\end{proposition}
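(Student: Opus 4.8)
The plan is to bound the Taylor remainder $R_{x,\ell}(y)$ directly from its closed form \eqref{eq:Taylor-3} together with the derivative estimates already established in Lemma~\ref{lem:f'(t)<<t^(k-1)}. By that lemma, the hypothesis $f(t) \ll t^{\k}$ gives $f^{(\ell)}(t) \ll t^{\k - \ell}$ for all $t$ large enough, with an implied constant depending only on $f$ and $\ell$. The remainder is $R_{x,\ell}(y) = \frac{y^{\ell}}{\ell!} f^{(\ell)}(x + \xi)$ for some $\xi \in [0,y]$, so the task reduces to controlling $f^{(\ell)}(x+\xi)$ uniformly for $0 \le \xi \le y \le x$.

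The key step is the observation that $x \le x + \xi \le 2x$ in the allowed range, so that $(x+\xi)^{\k-\ell}$ is comparable to $x^{\k-\ell}$: if $\k - \ell \ge 0$ then $(x+\xi)^{\k-\ell} \le (2x)^{\k-\ell} = 2^{\k-\ell} x^{\k-\ell}$, and if $\k - \ell < 0$ then $(x+\xi)^{\k-\ell} \le x^{\k-\ell}$. In either case $(x+\xi)^{\k-\ell} \ll_{\k,\ell} x^{\k-\ell}$. Combining this with $f^{(\ell)}(x+\xi) \ll (x+\xi)^{\k-\ell}$ and the trivial bound $y^{\ell}/\ell! \le y^{\ell}$ yields
\[
	R_{x,\ell}(y) = \frac{y^{\ell}}{\ell!} f^{(\ell)}(x+\xi) \ll y^{\ell} \, (x+\xi)^{\k - \ell} \ll y^{\ell} x^{\k-\ell},
\]
which is exactly the claimed estimate, with an implied constant depending only on $f$ and $\ell$.

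One point that needs care is the word \emph{uniformly for all $x \ge 1$}: Lemma~\ref{lem:f'(t)<<t^(k-1)} is stated in the eventual (germ) sense, i.e.\ $\abs{f^{(\ell)}(t)} \le C t^{\k-\ell}$ only for $t \ge t_0$ for some threshold $t_0$. To upgrade this to all $x \ge 1$, I would note that on the compact-in-logarithmic-scale region $1 \le x \le t_0$, $0 \le y \le x$, the remainder $R_{x,\ell}(y)$ is bounded because $f$ (hence $f^{(\ell)}$) is continuous on $[1, 2t_0]$ and $y \le t_0$; absorbing this bounded contribution into the implied constant (which is allowed to depend on $f$ and $\ell$, and $t_0$ itself depends only on $f$ and $\ell$) extends the estimate to the full range. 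Strictly speaking, extending ``$f$ continuous'' to the germ requires picking the representative defined on $[1,\infty)$, which is exactly the convention fixed just before the statement.

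I do not expect any genuine obstacle here: the proof is essentially a one-line consequence of Lemma~\ref{lem:f'(t)<<t^(k-1)} plus the elementary comparison $x + \xi \asymp x$. The only mildly delicate issue is the bookkeeping between the eventual/germ formulation of the derivative bounds and the requested uniformity over $x \ge 1$, which is handled as above by treating the bounded initial segment separately.
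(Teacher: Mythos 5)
Your proof is correct and follows essentially the same route as the paper: apply Lemma~\ref{lem:f'(t)<<t^(k-1)} to the Lagrange form of the remainder and observe that $(x+\xi)^{\k-\ell}\ll x^{\k-\ell}$ for $0\le\xi\le y\le x$ by splitting on the sign of $\k-\ell$. Your extra remark about absorbing the bounded initial segment $1\le x\le t_0$ into the implied constant is a legitimate piece of bookkeeping that the paper leaves implicit, but it does not change the argument.
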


\begin{proof}
	Combining \eqref{eq:Taylor-3} and Lemma \ref{lem:f'(t)<<t^(k-1)} we have 
	\begin{align*}
	y^{-\ell} R_{x,\ell}(y) \ll \sup_{\xi \in [0,y]} f^{(\ell)}(x+\xi) \ll \sup_{\xi \in [0,y]} (x+\xi)^{\k-\ell} =
	\begin{cases}
		x^{\k-\ell} & \text{if } \k < \ell;\\
		(x+y)^{\k-\ell} & \text{if } \k \geq \ell.
	\end{cases} 
	\end{align*}
	Assuming that $x \geq y$, the two estimates are equivalent.
\end{proof}

\begin{lemma}\label{lem:mono}
	Let $k \in \NN$ and let $f$ be a $k$ times continuously differentiable function defined on an open interval $I \subset \RR$. Suppose that $f^{(k)}(t)$ has constant sign on $I$.
	Then $f$ changes monotonicity on $I$ at most $k-1$ times.
\end{lemma}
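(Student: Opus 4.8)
The plan is to argue by a standard application of Rolle's theorem: first control the number of zeros of the successive derivatives $f^{(k)}, f^{(k-1)}, \dots, f'$, and then read off the monotonicity behaviour of $f$ from the sign of $f'$.

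First I would record the following counting claim, proved by induction on $j = 0, 1, \dots, k-1$: the function $f^{(k-j)}$ has at most $j$ zeros in $I$. For $j = 0$ this is exactly the hypothesis that $f^{(k)}$ has constant sign and hence no zeros. For the inductive step, suppose $f^{(k-j-1)}$ had $j+2$ distinct zeros $s_1 < s_2 < \dots < s_{j+2}$ in $I$; since $f$ is $k$ times continuously differentiable, Rolle's theorem applied on each $[s_i, s_{i+1}]$ produces a zero of $f^{(k-j)}$ in $(s_i, s_{i+1})$, yielding $j+1$ distinct zeros of $f^{(k-j)}$ and contradicting the induction hypothesis. Taking $j = k-1$ gives that $f'$ has at most $k-1$ zeros in $I$.

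Next I would deduce the statement about monotonicity. Let $Z = \{t \in I : f'(t) = 0\}$, which has cardinality at most $k-1$ by the previous step. Then $I \setminus Z$ is a union of at most $k$ open subintervals, and on each of them $f'$ is continuous and nowhere zero, hence of constant sign by the intermediate value theorem; consequently $f$ is monotone on the closure within $I$ of each such subinterval. Thus $I$ is covered by at most $k$ intervals on each of which $f$ is monotone, adjacent intervals meeting only at points of $Z$, which is precisely the assertion that $f$ changes monotonicity at most $k-1$ times. (Alternatively, one can induct directly on $k$: apply the lemma to $f'$, which is $(k-1)$ times continuously differentiable with $(f')^{(k-1)} = f^{(k)}$ of constant sign, to get that $f'$ changes monotonicity at most $k-2$ times, hence has at most $k-1$ sign changes, hence $f$ changes monotonicity at most $k-1$ times; the base case $k=1$ is immediate.)

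The argument is entirely routine. The only point that needs a little care is pinning down what ``changes monotonicity at most $k-1$ times'' should mean and noticing that a zero of $f'$ need not be a genuine change of monotonicity, so that the bound $\#Z \le k-1$ is more than enough; beyond this bookkeeping there is no real obstacle.
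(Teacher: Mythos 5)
Your proof is correct and is essentially the same argument as the paper's: both rest on iterating Rolle's/the mean value theorem up the chain of derivatives, the only difference being that you count zeros of $f', f'', \dots, f^{(k)}$ top-down directly while the paper runs the same induction as a proof by contradiction. The one tiny point the paper handles that you gloss over is the degenerate reading of ``constant sign'' in which $f^{(k)}\equiv 0$ (where your base case ``no zeros'' fails, but $f$ is then a polynomial of degree at most $k-1$ and the conclusion is immediate).
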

\begin{proof}
	If $f^{(k)}(t)$ is constant zero for all $t \in I$, then $f$ is a polynomial of degree at most $k-1$ and the statement is trivially true.
	Thus, we assume without loss of generality that $f^{(k)}(t) > 0$ for all $t \in I$.
	Let us assume for the sake of contradiction that $f$ changes monotonicity at least $k$ times. 
	Thus, $f'$ has at least $k$ zeros in $I$.
	It follows from the mean value theorem that $f''$ has at least $k-1$ zeros in $I$.
	Inductively applying this reasoning shows that $f^{(k)}$ has at least $1$ zero in $I$ giving the desired contradiction.
\end{proof}

\begin{theorem}\label{thm:error}
	Let $k,\ell \in \NN$ be integers with $k < \ell$ and let $f \in \mathcal{H}$ be a function satisfying $f(t) \ll t^{k}$, and let $P_{N,\ell}$ and $R_{N,\ell}$ be given by \eqref{eq:Taylor-1}--\eqref{eq:Taylor-3}. 
Then there exists some $0<\eta<1$ (only depending on $\ell$) such that for any $H \in \NN$, the formula	
	\begin{align}\label{eq_goal}
		e_{N}(h) &:= \floor{f(N+h)} - \floor{P_{N,\ell}(h)}, &&& 0 \leq h < H.
	\end{align}
	defines at most $\exp(O(H^{\eta}))$ different functions $e_{N}: [H] \to \ZZ$ for $N \in \NN$.
Moreover, for each $N$, at least one of the following holds
\begin{enumerate}
\item\label{it:error:A} \textit{$N$ is small:} $N = O(H^{\bra{\ell + \eta}/\bra{\ell - k}})$. 
\item\label{it:error:B} \textit{$e_N$ is sparse:} There are at most $O(H^{\eta})$ values of $h \in [H]$ such that $e_N(h) \neq 0$.
\item\label{it:error:C} \textit{$e_N$ is structured:} There exists a partition of $[H]$ into $O(H^{\eta})$ arithmetic progressions with step $O(H^{\eta})$ on which $e_N$ is constant.
\end{enumerate}
\end{theorem}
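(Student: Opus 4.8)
\medskip

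\noindent\emph{Proof strategy.} The plan is to fix once and for all a constant $\eta=\eta(\ell)\in(0,1)$ close to $1$ (its precise value will be dictated below by the exponents $O_\ell(1)$ in Proposition~\ref{pr_approx}), set $N_0:=H^{(\ell+\eta)/(\ell-k)}$, and split according to the size of $N$. If $N=O(N_0)$ we are in case~\ref{it:error:A}; since there are only polynomially many such $N$ and $H^{O_{\ell,k}(1)}\le\exp(O(H^\eta))$ for large $H$, these $N$ contribute at most $\exp(O(H^\eta))$ distinct functions $e_N$. So assume $N\gg N_0$; as $(\ell+\eta)/(\ell-k)>1$ this forces $N>H$ for $H$ large, so Proposition~\ref{prop:Taylor} gives $\abs{R_{N,\ell}(h)}\le C_1H^\ell N^{k-\ell}\ll H^{-\eta}$ for all $h\in[H]$, an upper bound $\e_N=O(H^{-\eta})<1$. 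Next I would record two elementary reductions: first, since $f^{(\ell)}$ has eventually constant sign (Lemma~\ref{lem:Hardy_event}), either $f^{(\ell)}$ is eventually $0$ — whence $R_{N,\ell}\equiv0$, $e_N\equiv0$, i.e.\ case~\ref{it:error:B} — or, for $N$ large, $R_{N,\ell}(h)$ keeps a constant sign on $[0,H]$, say $R_{N,\ell}\ge0$ (the other case is symmetric). Then from $f(N+h)=P_{N,\ell}(h)+R_{N,\ell}(h)$ with $0\le R_{N,\ell}(h)<1$ one gets $e_N(h)=\ip{P_{N,\ell}(h)+R_{N,\ell}(h)}-\ip{P_{N,\ell}(h)}\in\{0,1\}$, with $e_N(h)=1\iff\fp{P_{N,\ell}(h)}\ge1-R_{N,\ell}(h)$; in particular $e_N(h)\ne0$ forces $\fp{P_{N,\ell}(h)}\in[1-\e_N,1)$. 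The main tool is then Proposition~\ref{pr_approx}, applied to $g(h)=P_{N,\ell}(h)=\sum_{j=0}^{\ell-1}\beta_j h^j$ (with $\beta_j=f^{(j)}(N)/j!$), of degree $\ell-1$, and with parameter $\delta=H^{-\sigma}$ for a small $\sigma$ to be fixed.

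\medskip

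\noindent\emph{The equidistribution branch.} If the discrepancy of $(P_{N,\ell}(h)\bmod\ZZ)_{h\in[H]}$ is below $\delta$, then by the definition of discrepancy the set $\{h\in[H]:\fp{P_{N,\ell}(h)}\in[1-\e_N,1)\}$ has size at most $(\e_N+\delta)H$, so $e_N$ vanishes outside a set of size $O((\e_N+\delta)H)$. I would choose $\sigma$ (equivalently $\eta$) so that $(\e_N+\delta)H=O(H^\eta)$, which places us in case~\ref{it:error:B}.

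\medskip

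\noindent\emph{The structured branch.} Otherwise Proposition~\ref{pr_approx} yields $q\ll\delta^{-O_\ell(1)}$ with $\norm{q\beta_j}\ll\delta^{-O_\ell(1)}H^{-j}$ for $1\le j\le\ell-1$; this is the heart of the matter. Partition $[H]$ into the $q$ progressions $A_r=\{h\in[H]:h\equiv r\pmod q\}$, fix $r$, and parametrise $A_r$ by $h=r+qt$. Writing $q\beta_j=a_j+\theta_j$ with $a_j\in\ZZ$ and $\abs{\theta_j}=\norm{q\beta_j}$, and using that $\sum_{j\ge1}a_jh^j$ is congruent mod $q$ to a fixed $s_r$ along $A_r$, one obtains on $A_r$ a decomposition $P_{N,\ell}(h)=m(h)+\Psi(h)$ with $m(h)\in\ZZ$, $t\mapsto m(r+qt)$ a polynomial of degree $\le\ell-1$, $\Psi(h)=\beta_0+s_r/q+w(h)/q$, and $w(h)=\sum_{j=1}^{\ell-1}\theta_jh^j$; the point is that $\abs{w(h)}=O(\delta^{-O_\ell(1)})$ on $[0,H]$, so $t\mapsto\Psi(r+qt)$ is a polynomial of degree $\le\ell-1$ of total variation $O(\delta^{-O_\ell(1)})$ over $A_r$. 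Since $\fp{P_{N,\ell}(h)}=\fp{\Psi(h)}$, on $A_r$ we have $e_N(r+qt)=\ip{G_r(t)}-\ip{G_r(t)-R_{N,\ell}(r+qt)}$ where
\[
	G_r(t):=\Psi(r+qt)+R_{N,\ell}(r+qt)=f(N+r+qt)-m(r+qt).
\]
The crucial observation is that, because $t\mapsto m(r+qt)$ is a polynomial of degree $<\ell$, $G_r^{(\ell)}(t)=q^\ell f^{(\ell)}(N+r+qt)$ has constant sign for $N$ large, so Lemma~\ref{lem:mono} bounds the number of monotonicity changes of $G_r$ by $\ell-1$. Combined with the total-variation bound — which forces $G_r$ to cross only $O(\delta^{-O_\ell(1)})$ integers — I would cut $A_r$ into $O_\ell(\delta^{-O_\ell(1)})$ sub-progressions of step $q$ on each of which $G_r$ is monotone and $\ip{G_r}\equiv n$ for some integer $n$. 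On such a sub-progression $e_N(r+qt)\ne0$ forces $G_r(t)\in[n,n+\e_N)$, which by monotonicity of $G_r$ is one further sub-progression, and there $e_N(r+qt)=1\iff G_r(t)-R_{N,\ell}(r+qt)<n\iff\Psi(r+qt)<n$ — a condition on the polynomial $t\mapsto\Psi(r+qt)-n$ of degree $<\ell$, hence with at most $\ell-1$ sign changes. Thus $e_N$ is constant on each cell of a partition of $A_r$ into $O_\ell(\delta^{-O_\ell(1)})$ progressions of step $q$; summing over $r$ gives a partition of $[H]$ into $O_\ell(q\cdot\delta^{-O_\ell(1)})=O_\ell(\delta^{-O_\ell(1)})$ progressions of step $q\ll\delta^{-O_\ell(1)}$ on which $e_N$ is constant. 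Choosing $\sigma$ small enough that $\delta^{-O_\ell(1)}\le H^\eta$ puts us in case~\ref{it:error:C}.

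\medskip

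\noindent\emph{Parameters, counting, and the main difficulty.} The equidistribution branch needs $(\e_N+\delta)H=O(H^\eta)$, i.e.\ $\sigma\gtrsim1-\eta$; the structured branch needs $\sigma$ small relative to $\eta$; both hold simultaneously for $\eta=\eta(\ell)$ close enough to $1$ with $\sigma=1-\eta$, which fixes $\eta$ as a function of $\ell$ only. For the count: the small $N$ give $H^{O_{\ell,k}(1)}$ functions; the $e_N$ coming from the sparse case are functions $[H]\to\{0,\pm1\}$ supported on $O(H^\eta)$ points, of which there are at most $\exp(O(H^\eta\log H))$; the $e_N$ coming from the structured case are each constant on a partition of $[H]$ into $O(H^\eta)$ arithmetic progressions of step $O(H^\eta)$, and there are again at most $\exp(O(H^\eta\log H))$ of them. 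Summing, the total is $\exp(O(H^\eta\log H))\le\exp(O(H^{\eta'}))$ for any fixed $\eta'\in(\eta,1)$, and the three cases hold a fortiori with exponent $\eta'$; replacing $\eta$ by such an $\eta'$ (still depending only on $\ell$) yields the statement, with small $H$ absorbed into the implied constants. The step I expect to be the main obstacle is the structured branch: one must extract a decomposition on which $e_N$ is \emph{exactly} (not merely almost) constant, and the non-obvious ingredient that makes this work is that subtracting the integer-valued degree-$(<\ell)$ polynomial $m$ from $f(N+r+qt)$ leaves the sign of the $\ell$-th derivative unchanged, so that Lemma~\ref{lem:mono} still controls the monotonicity of $G_r$ and $e_N$ is ultimately cut out by a bounded-degree polynomial inequality.
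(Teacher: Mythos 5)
Your proposal is correct and follows essentially the same route as the paper: Proposition~\ref{prop:Taylor} to dispose of small $N$ and bound $R_{N,\ell}$, the Weyl dichotomy of Proposition~\ref{pr_approx} to split into a low-discrepancy (sparse) branch and a major-arc branch, and in the latter the same decomposition of $P_{N,\ell}$ along residue classes mod $q$ into an integer polynomial plus a polynomial of small range, with Lemma~\ref{lem:mono} controlling monotonicity of $f$ minus that integer polynomial so that $e_N$ is exactly constant on $O(\delta^{-O_\ell(1)})$ progressions. Your bookkeeping of $e_N\in\{0,1\}$ via the constant sign of $R_{N,\ell}$ and the level sets of $\ip{G_r}$ is a mild repackaging of the paper's case analysis in \eqref{eq_conditions}, and the parameter/counting choices match.
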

(In the theorem above, the constants implicit in the $O(\cdot)$ notation are allowed to depend on $k,\ell$ and $f$.)

\begin{proof}
	We define $\e = H^{\eta_0}$ for some $\eta_0 > 0$ which only depends on $\ell$ and will be specified later.
	Let $N \in \NN$. Recall that by Proposition \ref{prop:Taylor}, we have 
\begin{align}\label{eq:Hardy:85:1}
	\abs{R_{N,\ell}(h)} &\leq \e &&&\text{for all }  0 &\leq h < H
\end{align}	
	 unless $N \ll \e^{-{1}/\bra{\ell-k} }H^{{\ell}/\bra{\ell - k}} = H^{\bra{\ell + \eta_0}/\bra{\ell-k}}$. Thus, the values of $N$ such that \eqref{eq:Hardy:85:1} is false contribute only $O\bra{H^{O(1)}}$ different sequences $e_N$, and we may freely assume that $N$ is large enough that \eqref{eq:Hardy:85:1} holds. In this case we have $e_N: [H] \to \{-1,0,1\}$. Additionally, by Lemma \ref{lem:Hardy_event} we may also assume that $f^{(\ell)}(x) \neq 0$ for all $x \geq N$. As a consequence of \eqref{eq:Hardy:85:1}, for each $0 \leq h < H$, if 
	\begin{align}\label{eq_far_away}
		\varepsilon < \cb{P_{N,\ell}(h)} < 1- \varepsilon
	\end{align}
	then \(	\floor{f(N+h)} = \floor{P_{N,\ell}(h)} \) and hence $e_N(h) = 0$.
	
	Let $\a_0,\dots,\a_{\ell-1}$ denote the coefficients of $P_{N,\ell}$:
	\[
		P_{N,\ell}(h) = \a_0 + \a_1 h + \dots + \a_{\ell-1}h^{\ell-1}.
	\]	
	By Proposition~\ref{pr_approx}, we distinguish two cases.
	\begin{enumerate}
		\item $(P_{N,\ell}(h))_{h \in [H]}$ has discrepancy at most $\varepsilon$.
		\item There exists $1\leq q \ll \varepsilon^{-O(1)}$ such that $\max_{0 \leq j < \ell} H^j\norm{q \a_j} \ll \e^{-O(1)}$.
	\end{enumerate}
	
	In the first case, it follows that the number of $h \in [H]$ such that  \eqref{eq_far_away} does not hold is at most ${3}\varepsilon H$.
	Thus, $e_{N}$ is sparse, i.e. it has at most ${3}\varepsilon H \ll H^{1-\eta_0}$ non-zero entries.
	It remains to estimate the number of the sequences $e_N$ of this type. Using a standard estimate 
$\binom{n}{k} \leq \ffrac{n^k}{k!} < \ffrac{(en)^k}{k^k}$
	we find
	\begin{align*}
		\log \bra{ \sum_{0\leq j \leq 3 \varepsilon H} \binom{H}{j} 2^j } 
		& \ll \log \bra{ 3\varepsilon H} + \log \binom{H}{ 3 \varepsilon H} +  3 \varepsilon H \\
& \ll \log(3 H^{1-\eta_0}) + 3 \e H \log(e 3 H^{1-\eta_0}) + 3 H^{1-\eta_0}\\
		&\ll_{\eta_0} H^{1-\eta_0/2}.
	\end{align*}
Thus the number of distinct sequences $e_N$ is bounded by $\exp(O(H^{1-\eta_0/2}))$, which gives the desired result as long as $1-\eta_0/2\leq\eta$.
	
	In the second case we split $[H]$ into arithmetic progressions with common difference $q \ll \varepsilon^{-O_{\ell}(1)}$.
	This allows us to write (for $0\leq m < q$)
	\begin{align*}
		P_{N,\ell}(q h + m) &= \alpha_{0} + (q h + m) \alpha_{1} + \ldots + (q h + m)^{\ell-1} \alpha_{\ell-1}\\
			&= \beta_{0} + h \beta_{1} + \ldots + h^{\ell-1}\beta_{\ell-1}.
	\end{align*}
	The defining property of $q$ implies that
	\begin{align*}
		\max_{1\leq j < \ell} H^j \norm{\beta_{j}} \ll \varepsilon^{-O_{\ell}(1)}.
	\end{align*}
	In particular, we can write
	\begin{align*}
		\beta_{j} = z_{j} + s_{j},
	\end{align*}
	where $z_{j} \in \Z$ and $\abs{s_{j}} \ll H^{-j} \cdot \e^{-O_{\ell}(1)}$ for $ 0 \leq j < \ell$.
	Putting everything together, we find
	\begin{align*}
		f(N+q h +m) = Q(h) + r(h) + R_{N,\ell}(q h + m),
	\end{align*}
	where
	\begin{align*}
		Q(h) &= z_{0} + h z_{1} + \ldots + h^{\ell-1} z_{\ell-1}\\
		r(h) &= s_{0} + h s_{1} + \ldots + h^{\ell-1} s_{\ell-1}.
	\end{align*}
	In particular, $Q$ is a polynomial of degree at most $\ell-1$ with integer coefficients and $P_{N,
\ell}(q h +m) = Q(h) + r(h)$. Moreover,  $\abs{r(h)} \ll \e^{-O_{\ell}(1)}$ for all $h \in [0,H/q]$.
	Since $\abs{R_{N,\ell}(h)} \leq \e$, we see that 
	\begin{align*}
		\floor{f(N+q h + m)} \neq \floor{P_{N,\ell}(q h+m)}
	\end{align*}
	holds exactly if either
	\begin{align}
	\begin{split}\label{eq_conditions}
		\cb{r(h)} \leq \varepsilon \quad &\text{and} \quad \cb{r(h) + R_{N,\ell}(q h + m)} \geq 1-\varepsilon, \quad \text{or}\\
		\cb{r(h)} \geq 1-\varepsilon \quad &\text{and} \quad \cb{r(h) + R_{N,\ell}(q h + m)} \leq \varepsilon.
	\end{split}
	\end{align}
	In the first case $e_N(q h +m) = 1$ and in the second case $e_N(q h + m) = -1$.
	Since $r(h)$ is a polynomial of degree at most $\ell -1$, it changes monotonicity at most $\ell - 2$ times. Since the $\ell$-th derivative of $r(h) + R_{N,\ell}(q h + m) = f(N + q h + m) - P_{N,\ell}(q h + m) + r(h)$ has constant sign, by Lemma \ref{lem:mono} it changes monotonicity at most $\ell-1$ times on the interval $[0, H/q]$. Hence, we can decompose $[0, H/q]$ into at most $2 \ell-2$ intervals  $I_1, \ldots, I_p$ on which $r(h)$ and $r(h) + R_{N,\ell}(q h + m)$ are both monotone. 
	As $\abs{r(h)} \ll \e^{-O_{\ell}(1)}$, we can further subdivide each of the intervals $I_j$ into $O(\e^{-O_{\ell}(1)})$ subintervals such that for each subinterval, each of the inequalities in either true on the entire subinterval or false on the entire subinterval. As a consequence, $e_N$ is structured, i.e., $e_N$ is constant on each subinterval.
Thus, we have found a decomposition of $[H]$ into $O(\e^{-O_{\ell}(1)})$ arithmetic progressions on which $e_N$ is constant.
We can write $O(\e^{-O_{\ell}(1)}) = O(H^{C \eta_0})$ for some $C = C(\ell)>0$.
 Using the rough estimate $H^3$ for the number of arithmetic sequences contained in $[H]$, we can bound the number of sequences $e_N$ which arise this way by
\[
	(H^3)^{O(H^{C \eta_0})} = \exp\bra{O( H^{C \eta_0} \log H) } =  \exp\bra{ O_{\eta_0}(H^{(C+1) \eta_0 }}. 
\]
It  remains to choose $\eta_0 = (C+2)^{-1}$ and $\eta = 1-(2(C+2))^{-1}$ to finish the proof.
\end{proof} 
\section{Parametric generalised polynomials}
\label{sec:genpoly}
 
In this section we discuss parametric generalised polynomials which builds on and refines results obtained in~\cite{AdamczewskiKonieczny-2022}. 
In particular, we show that for any parametrised general polynomial that takes values in $[M]$, we can assume that the parameters belong to $[0,1)^J$ for some finite set $J$ (Proposition~\ref{prop:param-reduce-bdd}).
This allows us to show a polynomial bound on the number of subwords of bracket words along polynomials of a fixed degree (Corollary~\ref{cor:bracket_word_along_polys}).
At the end of the section we give the proof of Theorem~\ref{thm:A}.

Let $d \in \NN$. Generalised polynomial maps (or {\gp} maps for short) from $\RR^d$ to $\RR$ are the smallest family $\cG$ such that 
\begin{inparaenum}
\item all polynomial maps belong to $\cG$;
\item if $g,h \in \cG$ then also $ g + h, g \cdot h \in \cG$ (with operations defined pointwise);
\item if $g \in \cG$ then also $\ip{g} \in \cG$, where $\ip{g}$ is defined pointwise: $\ip{g}(x) = \ip{g(x)}$.
\end{inparaenum}
We note that generalised polynomials maps are also closed under the operation of taking the fractional part, given by $\fp{g} = g - \ip{g}$.
For a sets $\Omega \subset \RR^d$ and $\Sigma \subset \RR$ (e.g., $\Omega = \ZZ^d$, $\Sigma = \ZZ$), by a generalised polynomial map $g \colon \Omega \to \Sigma$ we mean the restriction $\tilde g|_{\Omega}$ to $\Omega$ of a generalised polynomial map $\tilde g \colon \RR^d \to \RR$ such that $\tilde g(\Omega) \subset \Sigma$. We point out that, unlike in the case of polynomials, the lift $\tilde g$ is not uniquely determined by $g$, unless $\Omega = \RR^d$.
 
In \cite{AdamczewskiKonieczny-2022}, we introduced a notion of a \emph{parametric {\gp} map} $\ZZ \to \RR$ with a finite index set $I$, which (modulo some notational conventions) is essentially the same as a {\gp} map $\RR^I \times \ZZ \to \RR$. For instance, the formula
\begin{align*}
	g_{\a,\b}(n) &= \ip{ \a n \ip{\b n}+\sqrt{2}n^2} &&& (\a,\b \in \RR)
\end{align*}
defines a {{\gp} map} $\ZZ \to \RR$ (or, strictly speaking, a family of {\gp} maps) parametrised by $\RR^2$. Formally, a \emph{parametric {\gp} map with index set $I$} or a \emph{{\gp} map parametrised by $\RR^I$} is a map $\RR^I \to \RR^{\ZZ}$, $\a \mapsto g_\a$, such that the combined map $\RR^I \times \ZZ \to \RR$, $(\a,n) \mapsto g_\a(n)$, is a {\gp} map.

Here, we will need a marginally more precise notion, where the set of parameters takes the form $\RR^{I_{\mathrm{real}}} \times \ZZ^{I_{\mathrm{int}}} \times [0,1)^{I_{\mathrm{frac}}}$ rather than $\RR^I$. Let $I_{\mathrm{real}}, I_{\mathrm{int}}, I_{\mathrm{frac}}$ be pairwise disjoint finite sets and put $I = I_{\mathrm{real}} \cup I_{\mathrm{int}} \cup I_{\mathrm{frac}}$. Then a \emph{{\gp} map parametrised by $\RR^{I_{\mathrm{real}}} \times \ZZ^{I_{\mathrm{int}}} \times [0,1)^{I_{\mathrm{frac}}}$} is the restriction of a {{\gp} map parametrised by $\RR^{I_{\mathrm{real}}} \times \RR^{I_{\mathrm{int}}} \times \RR^{I_{\mathrm{frac}}}$} (as defined above) to $\RR^{I_{\mathrm{real}}} \times \ZZ^{I_{\mathrm{int}}} \times [0,1)^{I_{\mathrm{frac}}}$.  We note that in the case where $I_{\mathrm{int}} = I_{\mathrm{frac}} = \emptyset$, the new definition is consistent with the previous one.

In \cite{AdamczewskiKonieczny-2022} we defined the operations of addition, multiplication and the integer part for parametric {\gp} maps, not necessarily indexed by the same set. Roughly speaking, if $I \subset J$ are finite sets then we can always think of a {\gp} map parametrised by $\RR^I$ as a {\gp} map parametrised by $\RR^J$, with trivial dependence on the parameters in $\RR^{J \setminus I}$. Thus, if $g_{\bullet}$ and $h_{\bullet}$ are {\gp} maps parametrised by $\RR^I$ and $\RR^J$ respectively, then we can think of both $g_{\bullet}$ and $h_{\bullet}$ as {\gp} maps parametrised by $\RR^{I \cup J}$, which gives us a natural way to define the (pointwise) sum and product $g_{\bullet} + h_{\bullet}$ and $g_{\bullet} \cdot h_{\bullet}$. We refer to \cite{AdamczewskiKonieczny-2022}  for a formal definition. This construction directly extends to {\gp} maps parametrised by $\RR^{I_{\mathrm{real}}} \times \ZZ^{I_{\mathrm{int}}} \times [0,1)^{I_{\mathrm{frac}}}$.

\renewcommand{\succeq}{\leadsto}
 
\begin{definition}\label{def:ind:succ}
	Let $g_\bullet$ and $h_\bullet$ be two {\gp} maps parametrised by $\RR^{I_{\mathrm{real}}} \times \ZZ^{I_{\mathrm{int}}} \times [0,1)^{I_{\mathrm{frac}}}$ and $\RR^{J_{\mathrm{real}}} \times \ZZ^{J_{\mathrm{int}}} \times [0,1)^{J_{\mathrm{frac}}}$ respectively.
	Then we say that $h_\bullet$ \emph{extends} $g_\bullet$, denoted\footnote{We use different notation $h_\bullet \succeq g_\bullet$ than in \cite{AdamczewskiKonieczny-2022} in order to avoid confusion with the symbol $\succ$ extensively used in Section \ref{sec:hardy}} 
	 $h_\bullet \succeq g_\bullet$, 
if there exists a {\gp} map 
	$\varphi \colon \RR^{I_{\mathrm{real}}} \times \RR^{I_{\mathrm{int}}} \times \RR^{I_{\mathrm{frac}}} \to \RR^{J_{\mathrm{real}}} \times \RR^{J_{\mathrm{int}}} \times \RR^{J_{\mathrm{frac}}}$ such that 
	\begin{itemize}
	\item \( \displaystyle \varphi\bra{\RR^{I_{\mathrm{real}}} \times \ZZ^{I_{\mathrm{int}}} \times [0,1)^{I_{\mathrm{frac}}}} \subset \RR^{I_{\mathrm{real}}} \times \ZZ^{I_{\mathrm{int}}} \times [0,1)^{I_{\mathrm{frac}}} \); and 
	\item \(  \displaystyle g_{\vec \a} = h_{\varphi(\vec\a)} \) for all \(  \displaystyle \vec\a \in \RR^{I_{\mathrm{real}}} \times \ZZ^{I_{\mathrm{int}}} \times [0,1)^{I_{\mathrm{frac}}}\). 
	\end{itemize}

\end{definition}

In \cite{AdamczewskiKonieczny-2022} we obtained a polynomial bound on the number of possible prefixes of a given {\gp} map parametrised by $[0,1)^I$.

\begin{theorem}[{\cite[Thm.\ 15.3]{AdamczewskiKonieczny-2022}}]\label{thm:AK-sword-comp-param}
	Let $g_\bullet:\ZZ \to \ZZ$ be a {\gp} map parametrised by $[0,1)^I$ for some finite set $I$.
	Then there exists a constant $C$ such that, as $N \to \infty$, we have
\begin{equation}\label{eq:469:1}
	\abs{ \set{g_{\vec\a}|_{[N]}}{\a \in [0,1)^I }} = O \bra{N^{C}}.
\end{equation}	
Above, the implicit constant depends only on $g_{\bullet}$.
\end{theorem}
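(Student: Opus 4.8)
The plan is to revisit the proof of \cite[Thm.\ 15.3]{AdamczewskiKonieczny-2022} and upgrade it from prefixes over a single interval $[N]$ to two-sided factors, while at the same time generalising from $[0,1)^I$-parametrised maps to the more precise parametrisation by $\RR^{I_{\mathrm{real}}} \times \ZZ^{I_{\mathrm{int}}} \times [0,1)^{I_{\mathrm{frac}}}$ that we will need later. Concretely, I would first record the relation $\succeq$ from Definition~\ref{def:ind:succ}: if $h_\bullet \succeq g_\bullet$ then every prefix (or factor) realised by $g_\bullet$ is also realised by $h_\bullet$, so it suffices to prove the bound after enlarging the index set and replacing $g_\bullet$ by any convenient extension. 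This lets me normalise: split off the real and integer parameters. An integer parameter $m \in \ZZ$ only enters a {\gp} map polynomially, and since we are evaluating on $[N]$ (or on a window of length $N$), one may restrict attention to $|m| \ll N^{O(1)}$ many residue-type reductions, each of which collapses the integer parameter into the "$n$-variable" at the cost of a polynomial factor; a real parameter $\a \in \RR$ can be reduced to its fractional part $\fp{\a} \in [0,1)$ together with an integer parameter $\ip{\a}$, the latter again handled as above (this is essentially the content of the yet-to-be-stated Proposition~\ref{prop:param-reduce-bdd}, which I would invoke). After these reductions we are back in the setting of Theorem~\ref{thm:AK-sword-comp-param}, i.e.\ a {\gp} map parametrised purely by $[0,1)^{I'}$ for a possibly larger finite set $I'$.

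The core of the argument — which I would take essentially verbatim from \cite{AdamczewskiKonieczny-2022} — is the following dynamical reformulation. By the Bergelson--Leibman theorem (Theorem~\ref{thm:BL}, to be stated in Section~\ref{sec:nilmanifolds}), a bounded {\gp} map $n \mapsto g_{\vec\a}(n)$ with $\vec\a \in [0,1)^{I'}$ can be written as $F_{\vec\a}(g^n \Gamma)$ for a fixed nilmanifold $X = G/\Gamma$, a fixed nilrotation $g$, and a family of piecewise-polynomial functions $F_{\vec\a} \colon X \to \ZZ$ whose discontinuity locus and pieces are semialgebraic of bounded complexity, uniformly in $\vec\a$; moreover the dependence of $F_{\vec\a}$ on $\vec\a$ is itself polynomial, so the whole family $\{F_{\vec\a} : \vec\a \in [0,1)^{I'}\}$ lives in a semialgebraic family of bounded complexity. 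The prefix $g_{\vec\a}|_{[N]}$ is then determined by which of the $N$ points $g^0\Gamma, g\Gamma, \dots, g^{N-1}\Gamma$ fall into which piece of $F_{\vec\a}$. Since both the orbit points and the defining inequalities of the pieces of $F_{\vec\a}$ (as $\vec\a$ varies) range over a fixed semialgebraic family, the number of distinct sign-patterns — hence the number of distinct prefixes — is polynomial in $N$ by a Milnor--Thom / Warren-type bound on the number of connected components (equivalently, realisable sign conditions) of an arrangement of boundedly many bounded-degree polynomials in boundedly many variables. This is exactly where the exponent $C$ in \eqref{eq:469:1} comes from, and $C$ depends only on the combinatorial and algebraic complexity of the family, i.e.\ only on $g_\bullet$.

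To pass from prefixes to two-sided factors one argues that a factor of length $N$ appearing in $(g_{\vec\a}(n))_{n \in \ZZ}$ at position $n_0$ equals a prefix of the shifted map $n \mapsto g_{\vec\a}(n + n_0)$; and $n \mapsto g_{\vec\a}(n+n_0)$ is again a {\gp} map, now parametrised by the pair $(\vec\a, n_0) \in [0,1)^{I'} \times \ZZ$. One then needs a uniform version of the above count that is valid for all shifts simultaneously — equivalently, one applies the prefix bound to the {\gp} map $\RR^{I'} \times \ZZ \times \ZZ \to \RR$, $(\vec\a, n_0, n) \mapsto g_{\vec\a}(n+n_0)$, viewing $n_0$ as an additional integer parameter and invoking the integer-parameter reduction from the first paragraph. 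Since the shifted orbit $g^{n_0} \cdot (g^n\Gamma)_{n\in[N]}$ still consists of $N$ points on a fixed nilmanifold and the pieces of $F_{\vec\a}$ are unchanged, the Milnor--Thom bound again yields $O(N^{C'})$ realisable patterns, uniformly in $n_0$, which is precisely the factor-complexity bound.

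The main obstacle I anticipate is not the dynamical translation, which is available from \cite{AdamczewskiKonieczny-2022} and \cite{BergelsonLeibman-2007}, but bookkeeping the \emph{uniformity} of all constants across the parameter reductions: one must check that enlarging the index set via $\succeq$, replacing real parameters by fractional parts plus bounded integer parameters, and absorbing the shift $n_0$ as an integer parameter, each only increases the semialgebraic complexity of the defining family by a bounded amount and does not let the polynomial degree $C$ depend on $N$. Getting this right is essentially the purpose of the (forthcoming) Proposition~\ref{prop:param-reduce-bdd}, which I would prove first and then feed into the Milnor--Thom counting to conclude.
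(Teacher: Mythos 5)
The paper does not prove this statement at all: Theorem~\ref{thm:AK-sword-comp-param} is imported verbatim from \cite[Thm.\ 15.3]{AdamczewskiKonieczny-2022} and used as a black box, so there is no internal proof to compare yours against. Judged on its own terms, your sketch also proves the wrong (or rather, a larger) statement: the theorem concerns only prefixes $g_{\vec\a}|_{[N]}$ of a map already parametrised by $[0,1)^I$, so the entire first paragraph of reductions (splitting off $\RR$- and $\ZZ$-parameters via Proposition~\ref{prop:param-reduce-bdd}) and the third paragraph (upgrading to two-sided factors via an extra integer shift parameter) address generalisations that are not part of the claim; in the paper's actual architecture those reductions are carried out \emph{after} this theorem, in Propositions~\ref{prop:param-reduce}--\ref{prop:param-reduce-bdd} and the subsequent proposition, with this theorem as the final input.

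The substantive gap is in your core counting step. You assert that the orbit points and the pieces of $F_{\vec\a}$ ``range over a fixed semialgebraic family'' with polynomial dependence on $\vec\a$, and then invoke a Milnor--Thom/Warren sign-pattern bound. But for a parametric generalised polynomial the dependence of the Mal'cev coordinates $\tau(g_{\vec\a}(n)\Gamma)$ on both $n$ and $\vec\a$ passes through nested integer-part operations (the reduction modulo $\Gamma$), so the conditions whose sign patterns you want to count are not polynomial in $\vec\a$, and the complexity of a naive piecewise-polynomial description can a priori grow with $n$. Controlling this --- i.e.\ showing that the relevant arrangement has complexity polynomial in $N$ with a uniform exponent --- is precisely the hard technical content of \cite{AdamczewskiKonieczny-2022}, which is handled there by a structural induction on generalised polynomial expressions (of the type quoted in Proposition~\ref{prop:gen-poly-induction}) rather than by a one-shot application of Bergelson--Leibman plus Warren. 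Your last paragraph correctly identifies ``uniformity of constants'' as the obstacle, but the resolution is not mere bookkeeping; without it the exponent $C$ is not shown to be independent of $N$.
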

 
Our next goal is to obtain a similar bound for the number of prefixes of a bounded {\gp} map parametrised by $\RR^I$. Even though we are ultimately interested in bounded {\gp} maps, Proposition \ref{prop:param-reduce} concerning unbounded {\gp} maps is more amenable to proof by structural induction. We will use the following induction scheme.

\begin{proposition}[{\cite[Prop.\ 13.9]{AdamczewskiKonieczny-2022}}]\label{prop:gen-poly-induction}
	Let $\cG$ be a family of {\pgp} maps from $\ZZ$ to $\ZZ$ with index sets contained in $\NN$. Suppose that $\cG$ has the following closure properties. 
\begin{enumerate}
\item\label{it:A-1} All {\gp} maps $\ZZ \to \ZZ$ belong to $\cG$.
\item\label{it:A-2} For every  $g_{\bullet}$ and $h_{\bullet} \in \cG$, it holds that $g_{\bullet}+h_{\bullet} \in \cG$ and $g_{\bullet} \cdot h_{\bullet} \in \cG$.
\item\label{it:A-3} For every $g_\bullet \in \cG$, $\cG$ contains all the {\pgp} maps $g'_\bullet \colon \ZZ \to \ZZ$ satisfying $g_\bullet \succeq g'_\bullet$.
\item\label{it:A-4} For every pair of disjoint finite sets $I \subset \NN$, $J \subset \NN$, and every sequence of {\pgp} maps $h^{(i)}_{\bullet} \in \cG$, $i \in I$, 
with index set $J$, $\cG$ contains the {\pgp} map $g_\bullet$ defined by
\[ g_{\vec \a,\vec\b}(n) = \ip{\sum_{i\in I} \a_i h^{(i)}_{\vec\b}(n)}\,, \qquad n \in \ZZ\,,\ \vec\a \in \RR^{I}\,,\ \vec\b \in \RR^J\,.\]
\end{enumerate}
Then $\cG$ contains all {\pgp} maps $\ZZ \to \ZZ$ with index sets contained in $\NN$.
\end{proposition}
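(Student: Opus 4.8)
The plan is to reduce the statement to the single closure property~\ref{it:A-4}, the only one that produces integer parts. Since any {\pgp} map $g_\bullet \colon \ZZ \to \ZZ$ with index set $I \subset \NN$ satisfies $g_{\vec\a}(n) = \ip{g_{\vec\a}(n)}$ for all $\vec\a \in \RR^I$ and $n \in \ZZ$, writing $\chi \colon \RR^I \times \ZZ \to \RR$ for the {\gp} map underlying $g_\bullet$ and $\ip{\chi}_\bullet$ for the {\pgp} map $\vec\a \mapsto \bigl( n \mapsto \ip{\chi(\vec\a, n)} \bigr)$, we have $g_\bullet = \ip{\chi}_\bullet$. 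Hence it is enough to prove
\[
	(\star) \qquad \ip{\chi}_\bullet \in \cG \quad \text{for every {\gp} map } \chi \colon \RR^I \times \ZZ \to \RR,\ I \subset \NN \text{ finite}.
\]
The maps covered by $(\star)$ are exactly the {\pgp} maps $\ZZ \to \ZZ$, but in $(\star)$ the object $\chi$ ranges over \emph{all} {\gp} maps; this is what makes a structural induction on $\chi$ run smoothly, since the sub-formulas of $\chi$ need not be integer-valued.

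I would prove $(\star)$ by induction on a \emph{complexity} $c(\chi)$ that is not increased by distributing products over sums — concretely, the maximal nesting depth of integer parts in a {\gp} formula for $\chi$. First I would put $\chi$ into the \emph{expanded form}
\[
	\chi(\vec\a, n) = \sum_{\iota \in T} \lambda_\iota\, \vec\a^{\vec e_\iota}\, n^{j_\iota} \prod_{k \in K_\iota} \ip{\chi_{\iota,k}(\vec\a, n)},
\]
a finite sum with $\lambda_\iota \in \RR$, $\vec e_\iota \in \NN_0^{I}$, $j_\iota \in \NN_0$, and $c(\chi_{\iota,k}) < c(\chi)$ for all $\iota, k$. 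This is produced by a routine structural induction on a {\gp} formula for $\chi$ (distribute multiplication over addition and expand polynomial leaves into monomials); throughout, every argument of an integer part stays an argument of some integer part of the original formula, so its depth is at most $c(\chi)-1$, and when $c(\chi)=0$ all $K_\iota = \emptyset$. By the induction hypothesis $\ip{\chi_{\iota,k}}_\bullet \in \cG$ for all $\iota,k$; since $n \mapsto n^{j}$ is a {\gp} map $\ZZ \to \ZZ$ and hence lies in $\cG$ by~\ref{it:A-1}, repeated use of~\ref{it:A-2} shows that each {\pgp} map
\[
	\Phi^{(\iota)}_\bullet \colon \vec\a \longmapsto \Bigl( n \mapsto n^{j_\iota} \prod_{k \in K_\iota} \ip{\chi_{\iota,k}(\vec\a, n)} \Bigr)
\]
belongs to $\cG$, all with the common index set $I$, and $\chi(\vec\a, n) = \sum_{\iota \in T} \lambda_\iota\, \vec\a^{\vec e_\iota}\, \Phi^{(\iota)}_{\vec\a}(n)$.

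To finish I would choose a finite set $T' \subset \NN$ disjoint from $I$ and in bijection $\iota \leftrightarrow \iota'$ with $T$, and apply~\ref{it:A-4} with the maps $\Phi^{(\iota)}_\bullet$ (index set $I$) and fresh parameters $\gamma_{\iota'}$, concluding that
\[
	G_{\vec\gamma, \vec\a}(n) := \ip{ \sum_{\iota' \in T'} \gamma_{\iota'}\, \Phi^{(\iota)}_{\vec\a}(n) }, \qquad (\vec\gamma, \vec\a) \in \RR^{T'} \times \RR^{I},
\]
lies in $\cG$. The polynomial — in particular {\gp} — map $\varphi \colon \RR^{I} \to \RR^{T'} \times \RR^{I}$ given by $\varphi(\vec\a) = \bigl( (\lambda_\iota \vec\a^{\vec e_\iota})_{\iota' \in T'},\ \vec\a \bigr)$ satisfies $G_{\varphi(\vec\a)}(n) = \ip{\chi(\vec\a, n)}$, so $G_\bullet \succeq \ip{\chi}_\bullet$ in the sense of Definition~\ref{def:ind:succ}; hence $\ip{\chi}_\bullet \in \cG$ by~\ref{it:A-3}, completing the induction and the proof. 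The main obstacle is the expanded-form step together with the choice of complexity measure: distributing a {\gp} formula may duplicate integer parts — so "number of integer parts" will not do — but it never increases the nesting depth and never enlarges the arguments of integer parts, which is precisely what keeps the induction well-founded. Everything after that is a bookkeeping application of~\ref{it:A-1}--\ref{it:A-4}, the fresh parameters $\gamma_{\iota'}$ serving to convert the real coefficients $\lambda_\iota$ and the parameter-monomials $\vec\a^{\vec e_\iota}$ into a {\gp} substitution that~\ref{it:A-3} can absorb.
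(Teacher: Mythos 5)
This proposition is quoted in the paper from \cite[Prop.\ 13.9]{AdamczewskiKonieczny-2022} without proof, so there is no in-paper argument to compare yours against; judged on its own, your proof is correct and is almost certainly the intended shape of the original one, since the four closure properties are tailored exactly to the clauses of your induction. The reduction to $(\star)$, the expanded form, and the final combination of \ref{it:A-1}--\ref{it:A-4} all check out: the two genuinely load-bearing observations — that distributing products over sums duplicates integer parts but never increases their nesting depth (so depth, not the count of brackets, is the right induction quantity), and that the real coefficients $\lambda_\iota \vec\a^{\vec e_\iota}$ multiplying the integer-valued factors $\Phi^{(\iota)}_\bullet$ can only be absorbed by one application of \ref{it:A-4} with fresh parameters followed by the polynomial substitution $\varphi$ and \ref{it:A-3} — are exactly right. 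Two small points of hygiene you should make explicit. First, ``the'' depth of a generalised polynomial map is not well defined (a map has many formulas), so the induction should formally run over formulas of depth at most $c$, which suffices since every map has some formula. Second, \ref{it:A-4} requires all the $h^{(i)}_\bullet$ to carry the \emph{same} index set $J$, whereas some of your $\Phi^{(\iota)}_\bullet$ (e.g.\ when $K_\iota=\emptyset$) naturally have a smaller index set; one inflates them to index set $I$ via \ref{it:A-3} applied to the coordinate projection, as the paper's conventions for combining differently-indexed parametric maps already anticipate. Neither point is a gap, only bookkeeping.
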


\begin{proposition}\label{prop:param-reduce}
	Let $g_{\bullet} \colon \ZZ \to \ZZ$ be a {\gp} map parametrised by $\RR^I$ for a finite set $I$. Then there exist finite sets $J,K$ and a {\gp} map $\tilde g_\bullet \colon \ZZ \to \ZZ$ parametrised by $\ZZ^J \times [0,1)^K$ such that $\tilde g_\bullet \succeq g_\bullet$ and $\tilde g_\bullet$ takes the form
	\begin{align*}
		\tilde g_{a,\b} &= \sum_{j \in J} a_j h^{(j)}_{\b}, &&& a \in \ZZ^J\,,\ \b \in [0,1)^K.
	\end{align*}
	where for each $j \in J$, $h^{(j)}_{\bullet} \colon \ZZ \to \ZZ$ is a {\gp} map parametrised by $[0,1)^K$.
\end{proposition}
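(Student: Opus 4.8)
The plan is to run the structural induction of Proposition~\ref{prop:gen-poly-induction}. Relabelling if necessary, we assume $I\subset\NN$. Let $\cG$ be the family of all {\pgp} maps $g_\bullet\colon\ZZ\to\ZZ$ with index set contained in $\NN$ that admit what we will call a \emph{normal form}: finite sets $J,K\subset\NN$, {\gp} maps $h^{(j)}_\bullet\colon\ZZ\to\ZZ$ parametrised by $[0,1)^K$ for $j\in J$, and a {\gp} map $\tilde g_\bullet$ parametrised by $\ZZ^J\times[0,1)^K$ with $\tilde g_{a,\b}=\sum_{j\in J}a_j h^{(j)}_\b$ and $\tilde g_\bullet\succeq g_\bullet$. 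Since the proposition is exactly the assertion that $\cG$ contains every {\pgp} map $\ZZ\to\ZZ$, it suffices to verify the four closure conditions of Proposition~\ref{prop:gen-poly-induction}, so the rest of the argument is a case check.

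Conditions \ref{it:A-1}, \ref{it:A-3} and the additive half of \ref{it:A-2} are routine. For \ref{it:A-1}, an unparametrised {\gp} map $g\colon\ZZ\to\ZZ$ is extended by the normal form $\tilde g_a:=a\cdot g$ (with $J=\{1\}$, $K=\emptyset$) via the constant witness $1$. For \ref{it:A-3}, if $\tilde g_\bullet\succeq g_\bullet$ with $\tilde g_\bullet$ in normal form and $g_\bullet\succeq g'_\bullet$, then $\tilde g_\bullet\succeq g'_\bullet$ by transitivity of $\succeq$ (compose the witnesses), and $\tilde g_\bullet$ is still a normal form. For the sum in \ref{it:A-2}, make the index sets and fractional-parameter sets of the two normal forms disjoint by relabelling and concatenate them; the witness pairs the two given witnesses.

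The product in \ref{it:A-2} is the first place where the shape of a normal form is used: writing the two normal forms as $\sum_{j\in J}a_j g^{(j)}_\b$ and $\sum_{j'\in J'}a'_{j'}h^{(j')}_{\b'}$,
\[
	\Bigl(\sum_{j\in J}a_j g^{(j)}_\b\Bigr)\Bigl(\sum_{j'\in J'}a'_{j'}h^{(j')}_{\b'}\Bigr)=\sum_{(j,j')\in J\times J'}(a_ja'_{j'})\,\bigl(g^{(j)}h^{(j')}\bigr)_{(\b,\b')},
\]
and a product of two integer parameters is again an integer parameter, while $g^{(j)}h^{(j')}$ is a {\gp} map parametrised by $[0,1)^{K\cup K'}$; hence the right-hand side is a normal form parametrised by $\ZZ^{J\times J'}\times[0,1)^{K\cup K'}$, and the witness for $g_\bullet\cdot h_\bullet$ is assembled from the two given witnesses (using that a map into $\RR^m$ is {\gp} iff its components are, and that {\gp} functions are closed under products).

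The main obstacle is condition \ref{it:A-4}. Given {\pgp} maps $h^{(i)}_\bullet\in\cG$ for $i\in I$, all with parameter index set $J_0$ (disjoint from $I$), we must show $g_{\vec\a,\vec\b}(n)=\ip{\sum_{i\in I}\a_i h^{(i)}_{\vec\b}(n)}$ lies in $\cG$. Replacing each $h^{(i)}_\bullet$ by its normal form $\tilde h^{(i)}_{a,\mu}=\sum_{k\in K_i}a_k\ell^{(i,k)}_\mu$ via a witness $\varphi_i$ mapping $\RR^{J_0}$ into $\ZZ^{K_i}\times[0,1)^{L_i}$ (with the $K_i$ and $L_i$ pairwise disjoint across $i$, and $\varphi_i(\vec\b)=(a^{(i)}(\vec\b),\mu^{(i)}(\vec\b))$) yields
\[
	\sum_{i\in I}\a_i h^{(i)}_{\vec\b}(n)=\sum_{i,k}\gamma_{i,k}\,\ell^{(i,k)}_{\mu^{(i)}(\vec\b)}(n),\qquad\gamma_{i,k}:=\a_i\,a^{(i)}_k(\vec\b)\in\RR.
\]
The key trick is to split each real coefficient as $\gamma_{i,k}=\ip{\gamma_{i,k}}+\fp{\gamma_{i,k}}$ with $\ip{\gamma_{i,k}}\in\ZZ$ and $\fp{\gamma_{i,k}}\in[0,1)$; since $\sum_{i,k}\ip{\gamma_{i,k}}\,\ell^{(i,k)}_{\mu^{(i)}(\vec\b)}(n)\in\ZZ$, the outer integer part factors through:
\[
	\ip{\sum_{i,k}\gamma_{i,k}\,\ell^{(i,k)}_{\mu^{(i)}(\vec\b)}(n)}=\sum_{i,k}\ip{\gamma_{i,k}}\,\ell^{(i,k)}_{\mu^{(i)}(\vec\b)}(n)+1\cdot\ip{\sum_{i,k}\fp{\gamma_{i,k}}\,\ell^{(i,k)}_{\mu^{(i)}(\vec\b)}(n)}.
\]
The right-hand side, viewed with parameters $\bigl((\ip{\gamma_{i,k}})_{i,k},1\bigr)\in\ZZ^{\mathcal K\cup\{*\}}$ and $\bigl((\fp{\gamma_{i,k}})_{i,k},(\mu^{(i)})_i\bigr)\in[0,1)^{\mathcal K\cup\mathcal L}$ (where $\mathcal K=\{(i,k)\}$ and $\mathcal L=\bigsqcup_i L_i$), is a normal form: it is a $\ZZ$-linear combination of the {\gp} maps $\ell^{(i,k)}_\bullet$ and of $\ip{\sum_{i,k}\fp{\gamma_{i,k}}\,\ell^{(i,k)}_\bullet(n)}$, all parametrised by $[0,1)^{\mathcal K\cup\mathcal L}$. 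The associated witness sends $(\vec\a,\vec\b)$ to the tuple with entries $\ip{\a_i a^{(i)}_k(\vec\b)}$, $1$, $\fp{\a_i a^{(i)}_k(\vec\b)}$ and $\mu^{(i)}(\vec\b)$; this is a {\gp} map (composed from the $\varphi_i$, products, and $\ip{\cdot}$) mapping $\RR^I\times\RR^{J_0}$ into $\ZZ^{\mathcal K\cup\{*\}}\times[0,1)^{\mathcal K\cup\mathcal L}$, and the displayed identity shows this normal form extends $g_\bullet$. Hence $g_\bullet\in\cG$, and with \ref{it:A-1}--\ref{it:A-4} verified, Proposition~\ref{prop:gen-poly-induction} gives the claim. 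The only point demanding care in a full write-up is the bookkeeping of the real/integer/fractional parameter index sets through these compositions, so that each witness respects the structure required in Definition~\ref{def:ind:succ}.
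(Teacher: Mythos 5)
Your proposal is correct and follows essentially the same route as the paper: structural induction via Proposition~\ref{prop:gen-poly-induction}, with the only nontrivial step being closure property \ref{it:A-4}, handled by splitting each coefficient $\a_i a^{(i)}_k(\vec\b)$ into its integer and fractional parts and pulling the integer-coefficient combination out of the floor (the paper's extra index $\diamond$ plays the role of your index $*$ with coefficient $1$). The bookkeeping of witnesses and parameter types that you flag at the end is exactly the level of detail the paper also leaves implicit.
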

\begin{proof}
\begin{enumerate}[wide]
\item If $g \colon \ZZ \to \ZZ$ is a fixed {\gp} map (i.e., if $I = \emptyset$) then we can simply take $\tilde g = g$.
\item Suppose that the conclusion holds for $g_\bullet,h_\bullet \colon \ZZ \to \ZZ$, and let the corresponding extensions $\tilde g_{\bullet}$ and $\tilde h_\bullet$ be given by 
	\begin{align*}
		\tilde g_{a,\b} &= \sum_{j \in J} a_j h^{(j)}_{\b}, &&& a \in \ZZ^J\,,\ \b \in [0,1)^K \\
		\tilde h_{c,\delta} &= \sum_{l \in L} c_{l} h^{(l)}_{\delta}, &&& c \in \ZZ^L\,,\ \delta \in [0,1)^M.
	\end{align*}
We may freely assume that the index sets $J,K,L,M$ are pairwise disjoint. 
We will show that the conclusion also holds for $g_{\bullet} + h_{\bullet}$ and $g_{\bullet} \cdot h_{\bullet}$. In the case of $g_{\bullet} + h_{\bullet}$ it is enough to combine the sums representing $\tilde g_{a,\b} $ and $\tilde h_{c,\delta}$ into a single sum. In the case of $g_{\bullet} \cdot h_{\bullet}$, we take
	\begin{align*}
	\tilde f_{e,(\beta,\delta)} &= \sum_{j \in J,\ l \in L} e_{j,l} \bra{ h^{(j)}_{\b} \cdot h^{(l)}_{\delta}} &&& e \in \ZZ^{J \times L}\,,\ (\beta,\delta) \in [0,1)^{K\times M}.
	\end{align*}
Then $\tilde f$ has the required form and (taking $e_{j,l} = a_j c_l$) we see that $\tilde f_{\bullet} \succeq \tilde g_{\bullet} \cdot \tilde h_{\bullet} \succeq g_{\bullet} \cdot h_{\bullet}$. 
\item Suppose that the conclusion holds for $g_\bullet$ and that $g_{\bullet} \succeq g'_\bullet$. Then the conclusion also holds for $g'_{\bullet}$ because the relation of being an extension is transitive.
\item Suppose that $I \subset \NN$, $J \subset \NN$ are disjoint finite sets,  $h^{(i)}_{\bullet}$ are {\gp} maps parametrised by $\RR^J$ which satisfy the conclusion for each for $i \in I$, and $g_\bullet$ is the {\pgp} map defined by
	\begin{align*}
	g_{\vec \a,\vec\b}(n) &:= \ip{\sum_{i\in I} \a_i h^{(i)}_{\vec\b}(n)},&&& n \in \ZZ\,,\ \vec\a \in \RR^{I}\,,\ \vec\b \in \RR^J.
	\end{align*}
Let the extensions of $h^{(i)}$ be given by
	\begin{align*} \tilde h_{c,\delta}^{(i)} &= \sum_{l \in L} c_{l} f^{(i,l)}_{\delta}, &&& c \in \ZZ^L\,,\ \delta \in [0,1)^M.
	\end{align*}
(Note that we may without loss of generality assume use the same index sets $L$ and $M$ for each $i \in I$.) We will show that the conclusion is satisfied for $g_\bullet$. We observe that we have the equality \begin{align*}
		\ip{\sum_{i\in I} \a_i \tilde h_{c,\delta}^{(i)}} 
		&= \ip{\sum_{i\in I,\ l \in L}  \a_i c_{l} f^{(i,l)}_{\delta} } 
		 &= \sum_{i\in I,\ l \in L}  \ip{\a_i c_{l}} f^{(i,l)}_{\delta} 
		+ \ip{\sum_{i\in I,\ l \in L}  \fp{\a_i c_{l}} f^{(i,l)}_{\delta} } 
	\end{align*}
	This motivates us to define
	\begin{align*}
	\tilde g_{e,\delta,\phi} &:= \sum_{i \in I,\ l \in L} e_{i,l} f^{(i,l)}_{\delta} 
	+ e_{\diamond} \ip{\sum_{i\in I,\ l \in L} \phi_{i,l} f^{(i,l)}_{\delta} } 
	\\ &
	e \in \ZZ^{I \times L \cup \{\diamond\}}, \phi \in [0,1)^{I \times L}, \delta \in [0,1)^M,
	\end{align*}
	where $\diamond$ is some index that does not belong to $I \times J$.
Letting also 
	\begin{align*}
		f^{(\diamond)}_{\delta,\phi} := 
	\ip{\sum_{i\in I,\ l \in L} \phi_{i,l} f^{(i,l)}_{\delta} } 
	&&& \phi \in [0,1)^{I \times L}\,,\ \delta \in [0,1)^M,
	\end{align*}
	we see that $\tilde g_{\bullet}$ takes the required form and (setting $\phi_{i,l} = \fp{\a_i c_l}$ and $e_{\diamond} = 1$) we have $\tilde g_\bullet \succeq g_\bullet$.
\end{enumerate}
Combining the closure properties proved above, we infer from Proposition \ref{prop:gen-poly-induction} that the conclusion holds for all {\pgp} maps.
\end{proof}

\begin{proposition}\label{prop:param-reduce-bdd}
	Let $M \in \NN$ and let $g_{\bullet} \colon \ZZ \to [M]$ be a {\gp} map parametrised by $\RR^I$ for a finite set $I$. Then there exist a  {\gp} map $\tilde g_\bullet \colon \ZZ \to [M]$ parametrised by $[0,1)^J$ for a finite set $J$ such that $\tilde g_\bullet \succeq g_\bullet$.
\end{proposition}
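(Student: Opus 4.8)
The plan is to deduce this from Proposition~\ref{prop:param-reduce} by reducing modulo $M$, which simultaneously enforces boundedness and collapses the integer parameters to a finite range that can then be re‑encoded by $[0,1)$‑valued parameters.

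First I would discard the boundedness hypothesis and apply Proposition~\ref{prop:param-reduce} to $g_\bullet$ viewed as a {\gp} map $\ZZ \to \ZZ$. This yields finite sets $J,K$ and a {\gp} map $\hat g_\bullet \colon \ZZ \to \ZZ$ parametrised by $\ZZ^J \times [0,1)^K$ which is \emph{linear} in the integer parameters,
\[
	\hat g_{a,\b} = \sum_{j \in J} a_j h^{(j)}_\b, \qquad a \in \ZZ^J,\ \b \in [0,1)^K,
\]
with each $h^{(j)}_\bullet \colon \ZZ \to \ZZ$ parametrised by $[0,1)^K$ and with $\hat g_\bullet \succeq g_\bullet$. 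Since each $h^{(j)}_\b(n)$ is an integer, linearity gives the congruence $\sum_{j} a_j h^{(j)}_\b(n) \equiv \sum_{j} (a_j \bmod M)\, h^{(j)}_\b(n) \pmod M$; in particular $\hat g_{a,\b}(n) \bmod M$ depends on $a$ only through the residues $a_j \bmod M \in \{0,\dots,M-1\}$. Recalling that $n \mapsto \psi(n) \bmod M = \psi(n) - M\ip{\psi(n)/M}$ is a {\gp} map whenever $\psi$ is, I would then set
\[
	\tilde g_{\theta, \b}(n) := \Bigl( \sum_{j \in J} \ip{M \theta_j}\, h^{(j)}_\b(n) \Bigr) \bmod M, \qquad \theta \in [0,1)^J,\ \b \in [0,1)^K,
\]
which is a {\gp} map $\ZZ \to [M]$ parametrised by $[0,1)^J \times [0,1)^K$: the values lie in $[M]$ by construction, and $\ip{M\theta_j}$ ranges over $\{0,\dots,M-1\}$ as $\theta_j$ ranges over $[0,1)$.

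It then remains to check $\tilde g_\bullet \succeq g_\bullet$. Let $\varphi \colon \RR^I \to \RR^J \times \RR^K$ be a {\gp} map witnessing $\hat g_\bullet \succeq g_\bullet$ in the sense of Definition~\ref{def:ind:succ}, written $\varphi(\vec\a) = (a(\vec\a), \b(\vec\a))$ with $a(\vec\a) \in \ZZ^J$ and $\b(\vec\a) \in [0,1)^K$. Define $\psi(\vec\a) := \bigl( (\fp{a_j(\vec\a)/M})_{j\in J},\ \b(\vec\a)\bigr)$; this is again a {\gp} map, it sends $\RR^I$ into $[0,1)^J \times [0,1)^K$, and $\ip{M\fp{a_j(\vec\a)/M}} = a_j(\vec\a) \bmod M$. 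Hence, using the congruence above and then $g_{\vec\a}(n) \in [M]$,
\[
	\tilde g_{\psi(\vec\a)}(n) = \Bigl( \sum_{j \in J} (a_j(\vec\a)\bmod M)\,h^{(j)}_{\b(\vec\a)}(n) \Bigr)\bmod M = \hat g_{\varphi(\vec\a)}(n) \bmod M = g_{\vec\a}(n) \bmod M = g_{\vec\a}(n),
\]
so $\psi$ witnesses $\tilde g_\bullet \succeq g_\bullet$ and $\tilde g_\bullet$ has all the required properties.

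There is no deep obstacle here; the argument is essentially bookkeeping, and the only points requiring care are that $\psi$ respects the real/integer/fractional structure of the parameter spaces in Definition~\ref{def:ind:succ} — it does, since every component of $\psi$ is either a fractional part or an already $[0,1)$‑valued coordinate — and that reduction modulo $M$ is a genuine {\gp} operation. The one indispensable input is the linear form of $\hat g$ supplied by Proposition~\ref{prop:param-reduce}: without it, $\hat g_{a,\b}(n) \bmod M$ need not be a function of the residues $a_j \bmod M$ alone, and the collapse of the integer parameters to a finite set would fail.
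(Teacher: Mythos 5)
Your proof is correct and follows essentially the same route as the paper: apply Proposition~\ref{prop:param-reduce}, exploit the linear form in the integer parameters to reduce modulo $M$, encode the residues via $\ip{M\theta_j}$ with $\theta_j \in [0,1)$, and witness the extension with the coordinatewise fractional part $\fp{a_j(\vec\a)/M}$. The only cosmetic difference is that you build the final reduction $(\cdot) \bmod M$ into the definition of $\tilde g$, whereas the paper applies it at the end by replacing $\tilde g_\bullet$ with $M\cdot\fp{\tilde g_\bullet/M}$.
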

\begin{proof}
	Let $\tilde g^{(0)}_{\bullet} \succeq g_{\bullet}$ be the {\pgp} from Proposition \ref{prop:param-reduce}, and let
	\begin{align*}
		\tilde g_{a,\b}^{(0)} &= \sum_{j \in J} a_j h^{(j)}_{\b}, &&& a \in \ZZ^J, \b \in [0,1)^K.
	\end{align*}
Since the value of $g_{\a,\b}(n)$ is completely determined by its residue modulo $M$, we expect that it is enough to consider the values of $a$ with $a \in [M]^J$.
This motivates us to put
	\begin{align*}
		\tilde g_{\a,\b} &= \sum_{j \in J} \ip{M \a_j} h^{(j)}_{\b}, &&& \a \in [0,1)^J, \b \in [0,1)^K.
	\end{align*}
Let $\phi \colon \ZZ^I \to \ZZ^J$ and $\psi \colon \ZZ^I \to \RR^K$ be {\gp} maps such that $g_{\a} = \tilde g^{(0)}_{\phi(\a),\psi(\a)}$. Let $\theta \colon \ZZ^I \to [0,1)^J$ be given by $\theta(\a) := \fp{\phi(\a)/M }$ (with fractional part taken coordinatewise). Then 
	\begin{align*}
		\tilde g_{\phi(\a),\b}^{(0)}(n) \equiv \tilde g_{\theta(\a),\b}(n) \bmod{M}
		, &&& \text{ for all } n \in \ZZ, \a \in \RR^I, \b \in [0,1)^K.
	\end{align*}
	Since $g_{\bullet}$ takes values in $[M]$, it follows that
	\begin{align*}
	g_{\alpha}(n) =	\tilde g_{\phi(\a),\psi(\a)}^{(0)}(n) \equiv \tilde g_{\theta(\a),\psi(\a)}(n)  \bmod {M}
		, &&& \text{ for all } n \in \ZZ, \a \in \RR^I.
	\end{align*}
Replacing $\tilde g_\bullet$ with $M \cdot \fp{\tilde{g_\bullet}/M}$ if necessary, we may further ensure that $\tilde g_\bullet$ takes values in $[M]$. As a consequence, $\tilde g_\bullet \succeq g_\bullet$, as needed. 
\end{proof}

\begin{proposition}
	Let $\bb a = (a(n))_{n \in \ZZ}$ be a (two-sided) bracket word over a finite alphabet $\Sigma$, and let $g_{\bullet} \colon \ZZ \to \ZZ$ be a {\gp} map parametrised by $\RR^I$ for some finite set $I$. Then there exists a constant $C > 0$ such that, as $N \to \infty$, we have
\[
	\abs{\set{\brabig{ a\bra{g_{\a} (n)}  }_{n=0}^{N-1}}{\a \in \RR^I} } = O(N^C).
\]
Above, the implicit constant depends on $\bb a$ and $g_\bullet$.
\end{proposition}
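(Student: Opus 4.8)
The plan is to strip the statement down to a bounded parametric {\gp} map and then invoke Theorem~\ref{thm:AK-sword-comp-param}, after shrinking the parameter space by Proposition~\ref{prop:param-reduce-bdd}. First I would recall that, by definition, a bracket word is a letter-to-letter coding of a finitely-valued {\gp} map; identifying the value set of the latter with $[M]$, there are a {\gp} map $b \colon \ZZ \to [M]$ and a map $\tau \colon [M] \to \Sigma$ with $a = \tau \circ b$. Since two parameters $\a,\a' \in \RR^I$ that yield the same word $\bra{b(g_\a(n))}_{n=0}^{N-1}$ necessarily yield the same word $\bra{a(g_\a(n))}_{n=0}^{N-1}$, we have
\[
	\abs{\set{\bra{a(g_\a(n))}_{n=0}^{N-1}}{\a \in \RR^I}} \le \abs{\set{\bra{b(g_\a(n))}_{n=0}^{N-1}}{\a \in \RR^I}},
\]
so it suffices to bound the right-hand side.

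Next I would verify that $n \mapsto b(g_\a(n))$ is again a {\gp} map $\ZZ \to [M]$ parametrised by $\RR^I$. Fixing {\gp} lifts $\tilde b \colon \RR \to \RR$ of $b$ and $\tilde g \colon \RR^I \times \RR \to \RR$ of the combined map $(\a,n) \mapsto g_\a(n)$, the composite $(\a,n) \mapsto \tilde b\bra{\tilde g(\a,n)}$ is a {\gp} map on $\RR^I \times \RR$, because the class of {\gp} maps is closed under composition: one substitutes $\tilde g$ for the single input variable throughout the inductive construction of $\tilde b$. On $\RR^I \times \ZZ$ the lift $\tilde g$ takes the integer value $g_\a(n)$, hence $\tilde b(\tilde g(\a,n)) = b(g_\a(n)) \in [M]$; restricting there yields a {\gp} map $c_\bullet \colon \ZZ \to [M]$, parametrised by $\RR^I$, with $c_\a(n) = b(g_\a(n))$.

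Then I would apply Proposition~\ref{prop:param-reduce-bdd} to $c_\bullet$, obtaining a finite set $J$ and a {\gp} map $\tilde c_\bullet \colon \ZZ \to [M]$ parametrised by $[0,1)^J$ with $\tilde c_\bullet \succeq c_\bullet$. Spelling out Definition~\ref{def:ind:succ} in our case --- all parameters of $c_\bullet$ real, all parameters of $\tilde c_\bullet$ fractional --- this gives a {\gp} map $\varphi \colon \RR^I \to \RR^J$ with $\varphi(\RR^I) \subset [0,1)^J$ and $c_\a = \tilde c_{\varphi(\a)}$ for every $\a \in \RR^I$, whence
\[
	\set{\bra{b(g_\a(n))}_{n=0}^{N-1}}{\a \in \RR^I} = \set{\tilde c_{\varphi(\a)}|_{[N]}}{\a \in \RR^I} \subseteq \set{\tilde c_\b|_{[N]}}{\b \in [0,1)^J}.
\]
By Theorem~\ref{thm:AK-sword-comp-param} the last set has $O(N^C)$ elements for some constant $C$, and combining this with the inequality from the first step proves the proposition, the constant $C$ and the implied constant depending only on $\bb a$ and $g_\bullet$.

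Given the structural results already available, all of this is short, and I expect the composition step to be the only place requiring (routine) care: one must check both that substituting a parametric {\gp} map into the argument of a one-variable {\gp} map again produces a {\gp} map --- an easy structural induction using closure under $+$, $\cdot$ and $\ip{\cdot}$ --- and that the resulting map still takes values in $[M]$, so that Proposition~\ref{prop:param-reduce-bdd} is applicable; the latter is automatic since $b$ is $[M]$-valued on all of $\ZZ$ and each $g_\a$ maps $\ZZ$ into $\ZZ$.
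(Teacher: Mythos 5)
Your proposal is correct and follows essentially the same route as the paper: reduce to the underlying $[M]$-valued {\gp} map (the paper simply assumes $\Sigma=[M]$ so that $a\circ g_\bullet$ is a bounded parametric {\gp} map), apply Proposition~\ref{prop:param-reduce-bdd} to pass to parameters in $[0,1)^J$, and conclude with Theorem~\ref{thm:AK-sword-comp-param}. The extra care you take with the coding $\tau$ and with closure of {\gp} maps under composition is detail the paper leaves implicit, but the argument is the same.
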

\begin{proof}
	Let $M := \abs{\Sigma}$. We may freely assume that $\Sigma = [M]$, in which case $a$ is a {\gp} map. Thus, $a\circ g_{\bullet}$ is a {\gp} map parametrised by $\RR^I$ and taking values in $[M]$. By Proposition \ref{prop:param-reduce-bdd}, there exists a {\gp} map $\tilde g_{\bullet}$ parametrised by $[0,1)^J$  for a finite set $J$ such that $\tilde g_{\bullet} \succeq a \circ g_{\bullet}$. Thus, it suffices to show that, for a certain $C > 0$, the number of words  $\displaystyle \bra{\tilde g_{\a}(n) }_{n=0}^{N-1}$ for $\a \in [0,1)^J$ is $O(N^C)$ as $N \to \infty$. This is precisely Theorem  \ref{thm:AK-sword-comp-param}.
\end{proof}

As a special case, we obtain a bound on the number of subsequences of bracket words along polynomials of a given degree.

\begin{corollary}\label{cor:bracket_word_along_polys}
	Let $\bb a = (a(n))_{n \in \ZZ}$ be a (two-sided) bracket word over a finite alphabet $\Sigma$ and let $d \in \NN$. Then there exists a constant $C > 0$ such that, as $N\to \infty$ we have
	\begin{align*}
		\abs{\set{(a(\ip{p(n)}))_{n=0}^{N-1}}{p \in \R_{\leq d}[x]}} = O(N^C),
	\end{align*}
	where the implied constant depends only on $\bb a$ and $d$.

\end{corollary}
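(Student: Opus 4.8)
The plan is to recognise the family $\{\,n \mapsto a(\ip{p(n)}) : p \in \R_{\leq d}[x]\,\}$ as a {\gp} map parametrised by a Euclidean space and then to quote the Proposition established immediately above. Identify a polynomial $p \in \R_{\leq d}[x]$ with its coefficient vector $\vec c = (c_0,\dots,c_d) \in \RR^{d+1}$, so that $p(n) = \sum_{j=0}^{d} c_j n^j$, and set $I := \{0,1,\dots,d\}$.

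First I would verify that
\[
	g_{\vec c}(n) := \ip{\sum_{j=0}^{d} c_j n^j}
\]
defines a {\gp} map $g_\bullet \colon \ZZ \to \ZZ$ parametrised by $\RR^{I}$. Indeed, $(\vec c, n) \mapsto \sum_{j=0}^{d} c_j n^j$ is an ordinary polynomial in the $d+2$ variables $c_0,\dots,c_d,n$, hence a {\gp} map $\RR^{d+1}\times\ZZ \to \RR$; applying the integer part and invoking closure of {\gp} maps under $\ip{\cdot}$ shows that $(\vec c, n) \mapsto g_{\vec c}(n)$ is a {\gp} map $\RR^{d+1}\times\ZZ \to \RR$ taking values in $\ZZ$, which is precisely the statement that $g_\bullet$ is a {\gp} map $\ZZ \to \ZZ$ parametrised by $\RR^{I}$.

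Then I would apply the preceding Proposition to this bracket word $\bb a$, this index set $I$, and this $g_\bullet$. It produces a constant $C>0$, depending only on $\bb a$ and $g_\bullet$ (hence only on $\bb a$ and $d$), with
\[
	\abs{\set{\bra{a(g_{\vec c}(n))}_{n=0}^{N-1}}{\vec c \in \RR^{d+1}}} = O(N^C) \quad \text{as } N \to \infty.
\]
Since every $p \in \R_{\leq d}[x]$ equals $n \mapsto \sum_{j=0}^{d} c_j n^j$ for a unique $\vec c \in \RR^{d+1}$, and then $a(\ip{p(n)}) = a(g_{\vec c}(n))$ for all $n$, the left-hand side above is exactly $\abs{\set{(a(\ip{p(n)}))_{n=0}^{N-1}}{p \in \R_{\leq d}[x]}}$, and the corollary follows.

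All the real content sits in the preceding Proposition --- and, through it, in Proposition~\ref{prop:param-reduce-bdd} and Theorem~\ref{thm:AK-sword-comp-param} --- so there is essentially no obstacle at this stage. The only point that needs any care, and it is entirely routine, is the verification above that the integer part of a polynomial whose coefficients and argument vary jointly is a genuine {\gp} map in the required sense; this is immediate from the inductive definition of {\gp} maps together with their closure under $\ip{\cdot}$.
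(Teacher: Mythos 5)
Your argument is correct and is exactly the intended derivation: the paper presents the corollary as an immediate special case of the preceding proposition, obtained by parametrising $\ip{p(n)}$ by the coefficient vector of $p$ in $\RR^{d+1}$, which is precisely what you do. The only point requiring verification --- that $(\vec c,n)\mapsto\ip{\sum_j c_j n^j}$ is a {\gp} map parametrised by $\RR^{d+1}$ taking values in $\ZZ$ --- is handled correctly.
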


Thus we are now in a position to prove Theorem~\ref{thm:A}.
\begin{proof}[Proof of Theorem~\ref{thm:A}]
	We aim to estimate the number of subwords of length $H$ of $(a(\ip{f(n)}))_{n=0}^{\infty}$, that is, we count words of the form 
	\begin{align*}
(a(\ip{f(N)}), \ldots, a(\ip{f(N+H-1)})) = (a(\ip{f(N+h)}))_{h=0}^{H-1}
	\end{align*}
	for $N\in \N$.
	Since $f$ has polynomial growth, there exists $k \in \N$ such that $f(t) \ll t^k$. We choose $\ell \geq k+1$ and apply Theorem~\ref{thm:error} to find some $0<\eta<1$ such that for any $H \in \N$ at least one of the following holds
\begin{enumerate}
\item\label{it:error:A_2} \textit{$N$ is small:} $N = O(H^{\bra{\ell + \eta}/\bra{\ell - k}})$. 
\item\label{it:error:B_2} \textit{$e_N$ is sparse:} There are at most $O(H^{\eta})$ values of $h \in [H]$ such that $e_N(h) \neq 0$.
\item\label{it:error:C_2} \textit{$e_N$ is structured:} There exists a partition of $[H]$ into $O(H^{\eta})$ arithmetic progressions with step $O(H^{\eta})$ on which $e_N$ is constant,
\end{enumerate}
where 
	\begin{align*}
		e_{N}(h) &:= \floor{f(N+h)} - \floor{P_{N,\ell}(h)}, &&& 0 \leq h < H
	\end{align*}
and $P_{N,\ell}$ is the Taylor polynomial of $f$ (see \eqref{eq:Taylor-2}).
We distinguish the three possible cases. Obviously \ref{it:error:A_2} contributes at most $O(H^{\ell + 1})$ different words.
For \ref{it:error:B_2} we first consider $a(\ip{P_{N,\ell}(h)})_{h=0}^{H-1}$. By Corollary~\ref{cor:bracket_word_along_polys} this word is contained in a set of size $O(H^C)$.
By assumption $a(\ip{f(N+h)}) \neq a(\ip{P_{N,\ell}(h)})$ for at most $O(H^{\eta})$ values of $h\in [H]$, which can be chosen in $\binom{H}{O(H^{\eta})}$ ways
For each position $h$ with $a(\ip{f(N+h)}) \neq a(\ip{P_{N,\ell}(h)})$ we have at most $\abs{\Sigma}$ possibilities for the value of $a(\ip{f(N+h)})$.
In total, we can estimate the number of subwords of length $H$ in this case (up to a constant) by
\begin{align*}
	H^C \cdot \binom{H}{O(H^{\eta})} \cdot \abs{\Sigma}^{O(H^{\eta})} &\leq H^C \cdot H^{O(H^{\eta})} \cdot \abs{\Sigma}^{O(H^{\eta})}\\
		&= \exp\rb{C \log H + O((\log H) \cdot H^{\eta}) + O((\log \abs{\Sigma}) \cdot H^{\eta})}\\
		&= \exp\rb{O_{C, \eta}(H^{(1+\eta)/2})}. 
\end{align*}
In the last case~\ref{it:error:C_2} we decompose $[H]$ into $O(H^{\eta})$ arithmetic progressions on which $e_N$ is constant. We let these arithmetic progressions be denoted by $P_1, \ldots, P_s$. As there are at most $H^3$ arithmetic progressions contained in $[H]$ we can bound the number of possible different decompositions by $(H^3)^{O(H^{\eta})}$.
On every such progression there exists a polynomial $q$ (which is either $P_{N,\ell}, P_{N,\ell}+1$ or $P_{N,\ell}-1$) such that $a(\ip{f(N+h)}) = a(\ip{q(h)})$. As a polynomial along an arithmetic progression is again a polynomial, by Corollary~\ref{cor:bracket_word_along_polys} we can bound the number of subwords appearing along some $P_j$ by $H^C$.
In total, we can estimate the number of subwords of length $H$ in this case by
\begin{align*}
	(H^3)^{O(H^{\eta})}\cdot (H^C)^{O(H^{\eta})} &= 
	\exp( (C+3) \log(H) \cdot O(H^{\eta}))\\
		&= \exp(O_{C,\eta}(H^{(1+\eta)/2})).
\end{align*}
This finishes the proof for $\delta = (1+\eta)/2 < 1$.
\end{proof}

\section{Nilmanifolds}\label{sec:nilmanifolds}

In this section we we recall some basic definitions and results on nilmanifolds and discuss the connection to generalized polynomials which goes back to the work of Bergelson and Leibman \cite{BergelsonLeibman-2007}. 

\subsection{Basic definitions}
In this section, we very briefly introduce definitions and basic facts related to nilmanifolds and nilpotent dynamics. Throughout this section, we let $G$ denote an $s$-step nilpotent Lie group of some dimension $D$. We assume that $G$ is connected and simply connected. We also let $\Gamma < G$ denote a subgroup that is discrete and cocompact, meaning that the quotient space $G/\Gamma$ is compact. The space $X = G/\Gamma$ is called a \emph{$s$-step nilmanifold}. A \emph{degree-$d$ filtration} on $G$ is a sequence $G_\bullet$ of subgroups 
\[
	G = G_0 = G_1 \geq G_2 \geq G_3 \geq \dots
\] such that $G_{d+1} = \{e_G\}$ (and hence $G_{i} = \{e_G\}$ for all $i > d$) and for each $i,j$ we have $[G_i,G_j] \subset G_{i+j}$, where $[G_i,G_j]$ is the group generated by the commutators $[g,h] = ghg^{-1}h^{-1}$ with $g \in G_i$, $h \in G_j$. A standard example of a filtration is the \emph{lower central series} given by $G_{(0)} = G_{(1)} = G$ and $G_{(i+1)} = [G,G_{(i)}]$ for $i \geq 1$.

A \emph{Mal'cev basis} compatible with $\Gamma$ and $G_\bullet$ is a basis $\cX = (X_1,X_2,\dots,X_D)$ of the Lie algebra $\mathfrak{g}$ of $G$ such that
\begin{enumerate}
\item for each $0 \leq j \leq D$, the subspace $\fh_j := \operatorname{span}\bra{X_{j+1},X_{j+2},\dots,X_D}$ is a Lie algebra ideal in $\fg$;
 \item for each $0 \leq i \leq d$, each $g \in G_i$ has a unique representation as $g = \exp(t_{D(i) + 1} X_{t_{D(i) + 1}})  \cdots \exp(t_{D-1} X_{D-1}) \exp(t_D X_D) $, where $D(i) := \operatorname{codim} G_i$ and $t_j \in \RR$ for $D(i) < j \leq D$;
 \item $\Gamma$ is the set of all products $\exp(t_1 X_1) \exp(t_2 X_2) \cdots \exp(t_D X_D)$ with $t_j \in \ZZ$ for $1 \leq j \leq D$.
\end{enumerate}
If the Lie bracket is given in coordinates by
\[
	[X_i,X_j] = \sum_{k=1}^D c^{(k)}_{i,j} X_k,
\]
where all of the constants $c^{(k)}_{i,j}$ are rationals with height at most $M$ then we will say that the complexity of $(G,\Gamma,G_\bullet)$ is at most $M$. We recall that the height of a rational number $a/b$ is $\max(\abs{a},\abs{b})$ ($a \in \ZZ$, $b \in \NN$, $\gcd(a,b) = 1$).

We will usually keep the the choice of the Mal'cev basis implicit, and assume that each filtered nilmanifold under consideration comes equipped with a fixed choice of Mal'cev basis. The Mal'cev basis $\cX$ induces coordinate maps $\tau \colon X \to [0,1)^D$ and $\tilde \tau \colon G \to \RR^D$, such that
\begin{align*}
	x &= \exp(\tau_1(x)X_1) \exp(\tau_2(x)X_2) \cdots \exp(\tau_D(x)X_D) \Gamma, &&& x \in X \\
	g &= \exp(\tilde\tau_1(g)X_1) \exp(\tilde\tau_2(g)X_2) \cdots \exp(\tilde\tau_D(g)X_D), &&& g \in G.
\end{align*}
The Mal'cev basis also induces a natural choice of a right-invariant metric on $G$ and a metric on $X$. We refer to \cite[Def.\ 2.2]{GreenTao-2012} for a precise definition. Keeping the dependence on $\cX$ implicit, we will use the symbol $d$ to denote either of those metrics.

The space $X$ comes equipped with the Haar measure $\mu_X$, which is the unique Borel probability measure on $X$ invariant under the action of $G$: $\mu_X(gE) = \mu_X(E)$ for all measurable $E \subset X$ and $g \in G$. When there is no risk of confusion, we write $dx$ as a shorthand for $d\mu_X(x)$.

A map $g \colon \ZZ \to G$ is polynomial with respect to the filtration $G_\bullet$, denoted $g \in \mathrm{poly}(\ZZ,G_\bullet)$, if it takes the form
\[
	g(n) = g_0^{} g_1^{n} \dots g_d^{\binom{n}{d}}, 
\]
where $g_i \in G_i$ for all $0 \leq i \leq d$ (cf.\ \cite[Lem.\ 6.7]{GreenTao-2012}; see also \cite[Def.\ 1.8]{GreenTao-2012} for an alternative definition). Although it is not immediately apparent from the definition above, polynomial sequences with respect to a given filtration form a group and are preserved under dilation. 

\subsection{Semialgebraic geometry}
A basic semialgebraic set $S \subset \RR^D$ is a set given by a finite number of polynomial equalities and inequalities:
\begin{equation}\label{eq:Nil:def-S}
	S = \set{ x \in \RR^d }{
		P_1(x) > 0,\dots,P_n(x) > 0, 
		Q_1(x) = 0,\dots,Q_m(x) = 0
	}.
\end{equation}
A semialgebraic set is a finite union of basic semialgebraic sets. In a somewhat ad hoc manner, we define the complexity of the basic semialgebraic set $S$ given by \eqref{eq:Nil:def-S} to be the sum $\sum_{i=1}^n \deg P_i + \sum_{j=1}^m \deg Q_j$ of degrees of polynomials appearing in its definition. (Strictly speaking, we take the infimum over all representations of $S$ in the form \eqref{eq:Nil:def-S}.) We also define the complexity of a semialgebraic set 
\begin{equation}\label{eq:Nil:def-S-union}
	S = S_1 \cup S_2 \cup \dots \cup S_r.
\end{equation}
represented to be the finite union of basic semiaglebraic sets $S_i$ as the sum of complexities of $S_i$. (Again, we take the infimum over all representations \eqref{eq:Nil:def-S-union}.)

Using the Mal'cev coordinates to identify the nilmanifold $X$ with $[0,1)^D$, we extend the notion of a semialgebraic set to subsets of $X$.  A map $F \colon X \to \RR$ is piecewise polynomial if there exists a partition $X = \bigcup_{i=1}^r S_i$ into semialgebraic pieces and polynomial maps $\Phi_i \colon \RR^D \to \RR$ such that $F(x) = \Phi_i(\tau(x))$ for each $1 \leq i \leq r$ and $x \in S_i$. One can check that these notions are independent of the choice of basis, although strictly speaking we will not need this fact.

\subsection{Quantitative equidistribution}

The Lipschitz norm of a function $F \colon X \to \RR$ is defined as
\[
	\normLip{F} = \norm{F}_{\infty} + \sup_{x,y \in X,\ x \neq y} \frac{\abs{F(x)-F(y)}}{d(x,y)}
\]
A sequence $(x_n)_{n=0}^{N-1}$ in $X$ is \emph{$\delta$-equidistributed} if for each Lipschitz function $F \colon X \to \RR$ we have
\[
	\abs{ \EEE_{n < N} F(x_n) - \int_X F(x) dx} \leq \delta \normLip{F}.
\]
In the case, where $X = [0,1]$ this notion is highly connected to the discrepancy of a sequence (see~\eqref{eq_discrepancy}). In fact, for $\delta>0$ small enough we have that $(x_n)_{n=0}^{N-1}$ has discrepancy $\delta$ if and only if it is $\delta^{O(1)}$ distributed. One direction follows immediately from the Koksma-Hlawka inequality and the other direction can be found for example in the proof of Proposition 5.2 in~\cite{Deshouillers2022}.

More restrictively, $(x_n)_{n=0}^{N-1}$ is \emph{totally $\delta$-equidistributed} if for each arithmetic progression $P \subset [N]$ of length at least $\delta N$ we have
\[
	\abs{ \EEE_{n \in P} F(x_n) - \int_X F(x) dx} \leq \delta \normLip{F}.
\]
A sequence $(\e_n)_{n=0}^{N-1}$ in $G$ is \emph{$(M,N)$-smooth} if $d(\e_n, e_G) \leq M$ and $d(\e_n,\e_{n+1}) \leq M/N$ for all $n \in [N-1]$.
A group element $\gamma \in G$ is \emph{$Q$-rational} if $\gamma^r \in \Gamma$ for some positive integer $r \leq Q$. A point $x \in G/\Gamma$
is \emph{$Q$-rational} if it takes the form $x = \gamma\Gamma$ for some $Q$-rational $\gamma \in G$. A sequence $(x_n)_{n=0}^{N-1}$ in $X$ is $Q$-rational if each point $x_n$ is $Q$-rational. 

\begin{theorem}[{\cite[Thm.\ 1.19]{GreenTao-2012}}]\label{thm:GT-factor}
	Let $C > 0$ be a constant.
	Let $G$ be a connected, simply connected nilpotent Lie group of dimension $D$, let $\Gamma < G$ be a lattice, let $G_\bullet$ be a nilpotent filtration on $G$ of length $d$, and assume that the complexity of $(G,\Gamma,G_\bullet)$ is at most $M_0$. Then for each $N \in \NN$ and each polynomial sequence $g \in \mathrm{poly}(\ZZ,G_\bullet)$ there exists an integer $M$ with $M_0 \leq M \ll M_0^{O_{C,d,D}(1)}$ and a decomposition 
$g(n) = \e(n) g'(n) \gamma(n)$ ($n \in \ZZ$), where $\e,g',\gamma \in \mathrm{poly}(\ZZ,G_\bullet)$ and 
\begin{enumerate}
\item the sequence $\bra{\e(n)}_{n=0}^{N-1}$ is $(M,N)$-smooth;
\item the sequence $\bra{\gamma(n)\Gamma}_{n=0}^{N-1}$ is $M$-rational and periodic with period $\leq M$;
\item there is a group $G' < G$ with Mal'cev basis $\cX'$ in which each element is an $M$-rational combination of elements of $\cX$ such that $g'(n) \in G'$ for all $n \in \ZZ$, and the sequence  $\bra{g'(n)\Gamma'}_{n=0}^{N-1}$ is totally $1/M^C$-equidistributed in $G'/\Gamma'$, where $\Gamma' = \Gamma \cap G'$.
\end{enumerate}
\end{theorem}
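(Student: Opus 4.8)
Since this statement is quoted verbatim from \cite[Thm.\ 1.19]{GreenTao-2012}, the ``proof'' here is really just a citation; nonetheless, let me sketch the argument one would give, following Green and Tao. The plan is to induct on the dimension $D = \dim G$, the case $D = 0$ being vacuous. The single external ingredient needed is the quantitative Leibman equidistribution criterion \cite[Thm.\ 2.9]{GreenTao-2012}: if a polynomial orbit $(h(n)\Gamma)_{n<N}$ on a nilmanifold of complexity at most $M'$ fails to be totally $\delta$-equidistributed, then some nontrivial horizontal character $\eta$ --- a continuous homomorphism $G \to \RR/\ZZ$ vanishing on $\Gamma$ and on $[G,G]$ --- satisfies $0 < \abs{\eta} \ll (M'/\delta)^{O_{d,D}(1)}$ and $\norm{\eta \circ h}_{C^\infty[N]} \ll (M'/\delta)^{O_{d,D}(1)}$, where $\eta \circ h \colon \ZZ \to \RR/\ZZ$ is a polynomial of degree $\leq d$ and the $C^\infty[N]$-norm measures the size of its coefficients after rescaling the variable by $N$.

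The argument would begin with a dichotomy. Fix a large exponent $A = A(C,d,D)$, to be pinned down at the very end. If $(g(n)\Gamma)_{n<N}$ is already totally $M_0^{-A}$-equidistributed, then (for $A \geq C$) one takes $\e(n) = \gamma(n) = e_G$, $g' = g$, $G' = G$, $M = M_0$, and all three conclusions hold trivially. Otherwise, applying the criterion with $\delta = M_0^{-A}$ produces a horizontal character $\eta$ with $0 < \abs{\eta} \ll M_0^{O_A(1)}$ and $\norm{\eta \circ g}_{C^\infty[N]} \ll M_0^{O_A(1)}$.

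Next comes the reduction to a proper subgroup, which is the crux. Writing $\eta \circ g(n) = \sum_{i=0}^{d}\binom{n}{i}\theta_i$ in $\RR/\ZZ$, the smoothness bound forces $\norm{\theta_i}_{\RR/\ZZ} \ll M_0^{O_A(1)} N^{-i}$ for $1 \leq i \leq d$. One would split this scalar polynomial, coefficient by coefficient, into a slowly-varying ``smooth'' part and a ``rational periodic'' part of denominator $\ll M_0^{O_A(1)}$, and lift both pieces through $\eta$ into $G$; using that $\mathrm{poly}(\ZZ,G_\bullet)$ is a group, this yields an $(M_0^{O_A(1)},N)$-smooth sequence $\e$ and an $M_0^{O_A(1)}$-rational periodic sequence $\gamma$ such that $g'(n) := \e(n)^{-1} g(n) \gamma(n)^{-1}$ satisfies $\eta(g'(n)) \equiv 0$, i.e.\ $g'$ takes values in the proper subgroup $G' := \ker\eta$ of dimension $< D$. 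One then equips $G'$ with the filtration $(G' \cap G_i)_i$ and, by the standard rational-subgroup lemma of \cite{GreenTao-2012}, with a Mal'cev basis each of whose elements is an $M_0^{O_A(1)}$-rational combination of the original basis, so that $\Gamma' := \Gamma \cap G'$ is a lattice and $(G',\Gamma',G'_\bullet)$ has complexity $\ll M_0^{O_A(1)}$. Applying the inductive hypothesis to $g'$ inside $G'/\Gamma'$ (with a larger exponent $A' \geq A$ chosen to absorb the losses below) gives $g' = \e' g'' \gamma'$ with $\e'$ smooth, $\gamma'$ rational periodic, and $g''$ totally $M^{-C}$-equidistributed; substituting and regrouping, $g(n) = (\e(n)\e'(n))\,g''(n)\,(\gamma'(n)\gamma(n))$ is the desired decomposition, since products of smooth sequences are smooth and products of rational periodic sequences are rational periodic with controlled parameters. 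Finally one collects the polynomial losses --- each of the form $M_0^{O_{C,d,D}(1)}$ --- and chooses $A,A'$ to obtain $M_0 \leq M \ll M_0^{O_{C,d,D}(1)}$.

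I expect the main obstacle to be exactly this reduction step: converting the analytic conclusion ``$\eta \circ g$ has small $C^\infty[N]$-norm'' into the algebraic statement that $g$ lies, up to a smooth and a rational periodic twist, inside a proper \emph{rational} subgroup of $G$, all while keeping every complexity polynomial in $M_0$. This requires the quantitative splitting of scalar polynomials into smooth and rational parts together with the structure theory of rational subgroups (existence of good Mal'cev bases, cocompactness of $\Gamma \cap G'$). A secondary subtlety, reflected in the word \emph{totally} in the statement, is that the criterion must be invoked on arbitrary subprogressions of $[N]$ rather than on $[N]$ itself; this is precisely what upgrades the conclusion from equidistribution to total equidistribution of $g''$.
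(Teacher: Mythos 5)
The paper gives no proof of this statement: it is imported verbatim as \cite[Thm.\ 1.19]{GreenTao-2012}, so a citation is all that is expected, and your sketch faithfully reproduces the Green--Tao argument (dichotomy via the quantitative Leibman criterion, splitting off smooth and rational parts of $\eta\circ g$, descent to $\ker\eta$ with a rational Mal'cev basis, and recursion on dimension). The only point worth flagging is that, as you state it, the induction is slightly circular --- the target level of equidistribution $1/M^C$ depends on the final $M$, which is not known in advance --- and Green--Tao resolve this by running the dichotomy as an iteration with an increasing sequence $M_0\le M_1\le\cdots$ that terminates because the dimension strictly drops; but this is a presentational matter, not a gap in the idea.
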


\subsection{Generalised polynomials}

The connection between nilmanifolds and generalised polynomials was first elucidated by Bergelson and Leibman \cite{BergelsonLeibman-2007}.

\begin{theorem}[{\cite{BergelsonLeibman-2007}}]\label{thm:BL}
	Let $f \colon \ZZ \to [0,1)$ be a sequence. Then the following conditions are equivalent:
\begin{enumerate}
\item\label{it:BL-1} $f$ is a {\gp} map;
\item\label{it:BL-2} there exists a connected, simply connected nilpotent Lie group $G$, lattice $\Gamma < G$, $g \in G$ and a piecewise polynomial map $F \colon G/\Gamma \to [0,1)$ such that $f(n) = F(g^n\Gamma)$ for all $n \in \ZZ$;
\item\label{it:BL-3} there exists a connected, simply connected nilpotent Lie group $G$ of some dimension $D$, lattice $\Gamma < G$, a compatible filtration $G_\bullet$, a polynomial sequence $g \in \mathrm{poly}(\ZZ,G_\bullet)$ and an index $1 \leq j \leq D$ such that $f(n) = \tau_j(g(n)\Gamma)$ for all $n \in \ZZ$. 
\end{enumerate}
\end{theorem}
\begin{remark}
	Strictly speaking, \cite{BergelsonLeibman-2007} does not include the assumption that $G$ should be connected and simply connected. However, this requirement can be ensured by replacing $G$ with a larger group. (cf. the ``lifting argument'' on \cite[p.\ 368]{Frantzikinakis-2009} and also \cite[Thm.\ A*]{BergelsonLeibman-2007}). The cost of this operation is that in \ref{it:BL-2} one may not assume that the action of $g$ on $G/\Gamma$ is minimal, but we do not need this assumption. 
\end{remark}

In our applications, we will need to simultaneously represent maps of the form $f(\ip{p(n)})$ where $f$ is a fixed {\gp} map and $p$ is a polynomial which is allowed to vary. Such a representation is readily obtained from Theorem \ref{thm:BL}.

\begin{theorem}\label{thm:gp-param}
	Let $f \colon \ZZ \to \RR$ be a bounded {\gp} map and let $d \in \NN$. Then there exists a connected, simply connected nilpotent Lie group $G$, a lattice $\Gamma < G$, a filtration $G_\bullet$, and a piecewise polynomial map $F \colon G/\Gamma \to \ZZ$ such that for each polynomial $p(x) \in \RR[x]$ with $\deg p \leq d$ there exists $g_p \in \mathrm{poly}(G_\bullet)$ such that for all $n \in \ZZ$ we have $f\bra{\ip{p(n)}} = F(g_p(n)\Gamma)$. 
\end{theorem}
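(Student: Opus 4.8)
The plan is to reduce everything to a single application of Theorem \ref{thm:BL} by encoding the polynomial $p$ into the parameters of a nilsequence, and then arguing that the resulting family of polynomial sequences stays inside one fixed filtered nilmanifold. First I would apply Theorem \ref{thm:BL} (in the form \ref{it:BL-3}) to the bounded {\gp} map $f$ itself: after replacing $f$ by $\abs{\Sigma}\fp{f/\abs{\Sigma}}$ or otherwise reducing to a $[0,1)$-valued component (since $f$ is bounded and integer-or-real valued, each coordinate of its representation is a {\gp} map into $[0,1)$ and finitely many such suffice), we obtain a connected simply connected nilpotent Lie group $G_0$, a lattice $\Gamma_0$, a filtration $(G_0)_\bullet$ of some degree $s$, a polynomial sequence $g_0 \in \mathrm{poly}(\ZZ,(G_0)_\bullet)$ and a piecewise polynomial $F_0 \colon G_0/\Gamma_0 \to \RR$ with $f(m) = F_0(g_0(m)\Gamma_0)$ for all $m \in \ZZ$.

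The key point is now to handle the substitution $m = \ip{p(n)}$. Write $p(n) = \fp{p(n)} + \ip{p(n)}$; since $g_0$ is a polynomial sequence with respect to a filtration of degree $s$, it extends to a polynomial map $\RR \to G_0$ (via the binomial coefficient expansion $g_0(t) = a_0 a_1^t \cdots a_s^{\binom{t}{s}}$, which makes sense for real $t$), and one has an identity of the shape $g_0(\ip{p(n)}) = g_0(p(n)) \cdot h(n)$ where $h(n)$ is built from the ``correction'' $-\fp{p(n)}$. Crucially, $g_0(p(n))$ is a polynomial sequence in $n$ with respect to a suitable filtration of $G_0$ whose degree depends only on $s$ and $d$, and the coefficients of that polynomial sequence depend polynomially (in fact generalised-polynomially, because of the $\fp{\cdot}$) on the coefficients of $p$. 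To make the correction term $h(n)$ fit the same framework, one passes to the product group $G = G_0 \times \RR^{J}$ (or a nilpotent extension thereof) that also records the scalar quantities $\fp{p(n)}, \fp{\a_i \text{(stuff)}}, \dots$ appearing when one iteratively applies the integer part; this is exactly the kind of construction carried out in the proof of Proposition \ref{prop:param-reduce} and Theorem \ref{thm:BL}. After absorbing everything, $f(\ip{p(n)})$ becomes $F(g_p(n)\Gamma)$ for a single piecewise polynomial $F$ on a single filtered nilmanifold $G/\Gamma$, with $g_p \in \mathrm{poly}(\ZZ,G_\bullet)$ depending on $p$ only through its (at most $d+1$) coefficients.

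Concretely I would phrase it as: the map $(n, \text{coefficients of }p) \mapsto f(\ip{p(n)})$ is, by the closure properties of {\gp} maps (it is obtained from the fixed {\gp} map $f$ by substituting the {\gp} map $\ip{\a_0 + \a_1 n + \dots + \a_d n^d}$), a bounded {\gp} map $\ZZ^{d+1} \times \ZZ \to \RR$, i.e.\ a bounded {\gp} map parametrised by $\RR^{d+1}$; then apply the parametrised version of Theorem \ref{thm:BL}, which is itself an immediate consequence of Theorem \ref{thm:BL} applied with $\ZZ$ replaced by $\ZZ^{d+1}\times\ZZ$ followed by restriction of the nilsystem orbit to the ``diagonal direction'' $n$ with the remaining $d+1$ coordinates frozen as parameters (freezing coordinates of a polynomial sequence keeps it polynomial of the same degree, inside the same nilmanifold). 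This gives $G,\Gamma,G_\bullet,F$ independent of $p$, and $g_p(n) = g(\a, n)$ with $\a$ the coefficient vector of $p$.

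The main obstacle is the bookkeeping in the reduction $g_0(\ip{p(n)}) = g_0(p(n))\cdot(\text{correction})$: one must check that replacing the integer argument by the real argument $p(n)$ keeps the sequence polynomial over a \emph{fixed} filtered group (controlling the new filtration degree in terms of $s$ and $d$ only), and that the correction term involving $\fp{p(n)}$ can be realised within a nilpotent group rather than forcing us outside the class — this is precisely where one leans on the Bergelson--Leibman machinery and the structural results of Section \ref{sec:genpoly}, and where the piecewise-polynomial (rather than continuous) nature of $F$ is unavoidable. Everything else is formal once one observes that substituting a {\gp} map into a {\gp} map yields a {\gp} map, so that the whole object is a single bounded parametric {\gp} map to which Theorem \ref{thm:BL} applies uniformly.
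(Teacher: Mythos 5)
Your overall strategy---represent $f$ on a nilmanifold via Theorem \ref{thm:BL}, substitute the real argument $p(n)$ into an extension of the polynomial orbit, and handle the integer part by adjoining an extra circle coordinate that records $p(n) \bmod 1$---is the same as the paper's. But the step you yourself flag as ``the main obstacle'' is exactly where your write-up has a genuine gap, and the paper's proof turns on a specific choice that resolves it. You start from form \ref{it:BL-3} of Theorem \ref{thm:BL}, i.e.\ a general polynomial sequence $g_0(m)=a_0a_1^m\cdots a_s^{\binom{m}{s}}$, and claim the correction $h(n)=g_0(p(n))^{-1}g_0(\ip{p(n)})$ is ``built from $-\fp{p(n)}$''. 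That is false for a general polynomial sequence: already in the abelian example $g_0(m)=a^{m^2}$ one has $g_0(\ip{t})g_0(t)^{-1}=a^{-2t\fp{t}+\fp{t}^2}$, which depends on $t$ and not only on $\fp{t}$. Consequently $F$ cannot recover $g_0(\ip{p(n)})\Gamma_0$ from the pair $\bra{\fp{p(n)},\,g_0(p(n))\Gamma_0}$, and adjoining a single circle coordinate does not suffice; one would have to record essentially all the binomial data of $p(n)$, which you do not specify. The paper avoids this entirely by invoking form \ref{it:BL-2} instead: there the orbit is $g_0^m$ for a \emph{single} group element, so $g_0^{\ip{t}}=g_0^{-\fp{t}}\cdot g_0^{t}$ and the correction is left multiplication by $g_0^{-\fp{t}}$, which descends to $G^{(0)}/\Gamma^{(0)}$ and depends only on $\fp{t}$. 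One then sets $G=G^{(0)}\times\RR$, $\Gamma=\Gamma^{(0)}\times\ZZ$, $F(t+\ZZ,h\Gamma^{(0)})=F^{(0)}(g_0^{-\fp{t}}h\Gamma^{(0)})$ and $g_p(n)=\bra{p(n),g_0^{p(n)}}$, which is polynomial for the filtration $G_i=G_{(\ip{i/d})}$.

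Your fallback (``concretely'') argument does not repair this. The coefficients of $p$ are arbitrary reals, so the combined map is a {\gp} map $\RR^{d+1}\times\ZZ\to\RR$, not $\ZZ^{d+1}\times\ZZ\to\RR$; Theorem \ref{thm:BL} as stated applies to sequences indexed by $\ZZ$, and ``freezing coordinates'' of a $\ZZ^{d+2}$-indexed orbit at real values is not meaningful. A version of Bergelson--Leibman for {\gp} maps on $\RR^{k}$ would make this route viable, but it is not what the paper states or uses, and it is not an ``immediate consequence'' of the integer version. I would recommend rewriting the proof around form \ref{it:BL-2} and the explicit $G^{(0)}\times\RR$ construction.
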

\begin{proof}
	By Theorem \ref{thm:BL}, there exists a nilmanifold $G^{(0)}/\Gamma^{(0)}$ together with a piecewise polynomial map $F^{(0)} \colon G^{(0)}/\Gamma^{(0)} \to \RR$, and a group element $g_0 \in G^{(0)}$ such that $f(n) = F^{(0)}(g_0^n \Gamma)$ for all $n \in \ZZ$. 
	Following the strategy in \cite[Lem.\ 4.1]{Frantzikinakis-2009}, let $G :=   G^{(0)} \times \RR$ and $\Gamma :=   \Gamma^{(0)} \times \ZZ$ and let $F \colon G/\Gamma \to \RR$ be given by $F(t + \ZZ,h   \Gamma^{(0)}) :=   F^{(0)}(   g_0^{-\fp{t}} h    \Gamma^{(0)})$ for $t \in \RR$ and $h \in   G^{(0)}$. This construction guarantees that $F$ is piecewise polynomial and for all $t \in \RR$ we have
	\[ F(t+\ZZ,  g_0^t \Gamma) = F^{(0)}(  g_0^{\ip{t}} \Gamma ) = f(\ip{t}). \]
	For $p \in \RR[x]$ and $n \in \ZZ$ let
\(
	g_p(n) := \bra{p(n),  g_0^{p(n)}}
\).
Then $g_\a$ is polynomial with respect to the filtration $G_\bullet$ given by $G_{i} = G_{(\ip{i/d})}$, where $\bra{ G_{(j)} }_j$ denotes the lower central series, and we have $f\bra{\ip{p(n)}} = F(g_p(n)\Gamma)$ for all $n \in \ZZ$.
\end{proof} 
\newcommand{\hh}{k}	

\section{M\"{o}bius orthogonality}
\label{sec:mobius} 

\subsection{Main result}
In this section, we discuss M\"obius orthogonality of bracket words along Hardy field sequences. Our main result is Theorem \ref{thm:B}, which we restate below.

\begin{theorem}\label{thm:Mobius}
	Let ${\bb a} = (a(n))_{n \in \ZZ}$ be a (two-sided) $\RR$-valued bracket word and let $f \colon \RR_+ \to \RR$ be a Hardy field function with polynomial growth. Then
	\begin{align}\label{eq:Mobius:main-0}
	&&&& 	\frac{1}{N} \sum_{n=1}^N \mu(n) a\bra{\ip{f(n)}}  &\to 0 &&& \text{as } N \to \infty. 
	\end{align}
\end{theorem}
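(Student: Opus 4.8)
The plan is to reduce the Hardy field estimate to the known Möbius orthogonality of nilsequences in short intervals (Theorem \ref{thm:Mobius:MSTT}), using the Taylor approximation machinery of Section \ref{sec:hardy} to cut $[1,N]$ into short blocks on which $f$ is well-approximated by a polynomial of bounded degree. First I would fix $\e > 0$ and a block length $H = H(N)$, growing slowly with $N$ (e.g.\ $H = N^{o(1)}$ or a small power of $N$), and partition $[1,N]$ into $\asymp N/H$ intervals of the form $[N_0, N_0+H)$. On each such interval, writing $k$ for an integer with $f(t) \ll t^k$ and choosing $\ell \geq k+1$, Theorem \ref{thm:error} tells us that $\ip{f(N_0+h)} = \ip{P_{N_0,\ell}(h)} + e_{N_0}(h)$ where $e_{N_0}$ is either sparse (non-zero for $O(H^\eta)$ values of $h$) or structured (constant on each of $O(H^\eta)$ arithmetic progressions with step $O(H^\eta)$), apart from the $O(H^{(\ell+\eta)/(\ell-k)})$ initial blocks with $N_0$ small, which contribute a negligible $o(N)$ to the sum. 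The sparse case contributes $O(H^\eta) = o(H)$ terms per block and so is harmless after summing; in the structured case, on each sub-progression we have $a(\ip{f(N_0+h)}) = a(\ip{q(h)})$ for a polynomial $q$ of degree $< \ell$ (namely $P_{N_0,\ell} \pm 0,1$), and a polynomial along an arithmetic progression is again a polynomial of degree $< \ell$.

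Thus the matter reduces to bounding, for $N_0$ ranging over the block starts and $q$ a polynomial of degree $\leq \ell - 1$, sums of the form $\sum_{h} \mu(N_0 + \text{(affine in }h)) \, a(\ip{q(h)})$ over short ranges of $h$. Here I would invoke Theorem \ref{thm:gp-param}: applied to the bounded generalised polynomial underlying $\bb a$ and degree bound $d = \ell - 1$, it produces a single nilmanifold $G/\Gamma$, a single piecewise polynomial $F \colon G/\Gamma \to \ZZ$, and for each polynomial $q$ a polynomial sequence $g_q \in \mathrm{poly}(G_\bullet)$ with $a(\ip{q(h)}) = F(g_q(h)\Gamma)$. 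Crucially $G$, $\Gamma$, $F$ and the filtration are \emph{uniform} in $q$, so the only thing that varies from block to block is the polynomial sequence $g_q$ inside a \emph{fixed} nilmanifold. The remaining obstacle is that $F$ is piecewise polynomial, hence discontinuous, whereas Theorem \ref{thm:Mobius:MSTT} applies to Lipschitz functions; this is exactly the technical point the introduction flags as the main difficulty, to be handled in Section \ref{sec:short-int} by approximating $F$ above and below by Lipschitz functions agreeing with it outside a thin neighbourhood of the semialgebraic boundary, and controlling the equidistribution of $(g_q(h)\Gamma)$ near that boundary (using the quantitative equidistribution theorem, Theorem \ref{thm:GT-factor}) so that the error from the discontinuities is small for all but a negligible proportion of blocks.

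With the discontinuity issue dealt with, the short-interval Möbius orthogonality for nilsequences gives, for each fixed $\e$, a bound $\big| \sum_{h \in [H]} \mu(N_0 + \cdots) F(g_q(h)\Gamma) \big| \leq \e H$ for every block whose associated polynomial sequence is sufficiently equidistributed, once $H$ and $N$ are large enough; the non-equidistributed blocks are few and contribute $o(N)$. Summing the $O(\e H)$ bound over the $\asymp N/H$ blocks yields $\big| \sum_{n=1}^N \mu(n) a(\ip{f(n)}) \big| \leq \e N + o(N)$, and since $\e$ is arbitrary this proves \eqref{eq:Mobius:main-0}. I expect the genuinely hard step to be the transfer of short-interval Möbius orthogonality from Lipschitz to piecewise-polynomial-with-semialgebraic-pieces observables uniformly over the varying polynomial sequences $g_q$ — one must quantify how little mass $(g_q(h)\Gamma)_{h \in [H]}$ places near a semialgebraic set of bounded complexity, and this requires combining the factorisation theorem with a bound on the measure of tubular neighbourhoods of semialgebraic sets, which is precisely what Section \ref{sec:short-int} is set up to provide.
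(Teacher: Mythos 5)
Your plan is essentially the paper's proof: a block decomposition of the range of summation, Theorem~\ref{thm:error} to replace $\ip{f(N_0+h)}$ by $\ip{P_{N_0,\ell}(h)}$ up to a sparse or structured error, Theorem~\ref{thm:gp-param} to realise $a(\ip{q(h)})$ as $F(g_q(h)\Gamma)$ on a \emph{fixed} nilmanifold uniformly in $q$, and the extension of short-interval M\"obius orthogonality from Lipschitz to piecewise polynomial observables (Theorem~\ref{thm:Mobius:semialg}, proved in Section~\ref{sec:short-int} via the factorisation theorem and measure bounds for sublevel sets of polynomials) to conclude. One caveat: your parenthetical suggestion $H = N^{o(1)}$ would not work, since Theorem~\ref{thm:Mobius:MSTT} requires $H \geq N^{0.626}$; the block length must be chosen in the window where both the short-interval M\"obius input applies and the Taylor remainder is controlled, which is why the paper takes $H \asymp N^{0.7}$ and $\ell = 10k$, so that case \ref{it:error:A} of Theorem~\ref{thm:error} forces $N = O_\e(1)$ and can be discarded.
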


As usual, we will use Taylor expansion to approximate the restriction of $f(n)$ to an interval with a polynomial sequence, and then use Theorem \ref{thm:error} to control the error term involved in computing $\ip{f(n)}$. The sequence $a(\ip{f(n)})$ can then be represented on a nilmanifold by Bergelson--Leibman machinery. As the next step, we require a suitable result on M\"obius orthogonality in short intervals. In Section \ref{sec:short-int}, we will prove the following theorem, which is closely related to \cite[Thm.\ 1.1(i)]{MSTT}.
Below, we let $\mathcal{AP}$ denote the set of all arithmetic progressions in $\ZZ$.

\begin{theorem}\label{thm:Mobius:semialg}
	Let $G$ be a connected, simply connected nilpotent Lie group, let $\Gamma < G$ be a lattice, let $G_\bullet$ be a filtration on $G$, assume that $G_\bullet$ and $\Gamma$ are compatible, and let $F \colon G/\Gamma \to \RR$ be finitely-valued piecewise polynomial map. Let $N,H$ be integers with $N^{0.626} \leq H \leq N$. Then
	\begin{align}\label{eq:Mobius:semialg}
 \sup_{g \in \mathrm{poly}(G_\bullet)} \sup_{P \in \mathcal{AP}} 
 	\abs{ \EEE_{h < H} 
		1_{P}(h) \mu(N+h) F(g(h)\Gamma)
	} = o_{N\to\infty}(1),
\end{align}
where the rate of convergence may depend on $G,\Gamma,G_\bullet$ and $F$.
\end{theorem}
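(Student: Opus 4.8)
The plan is to deduce this from \cite[Thm.\ 1.1(i)]{MSTT}, which provides exactly the estimate \eqref{eq:Mobius:semialg} but only for \emph{Lipschitz} functions $F$ rather than finitely-valued piecewise polynomial ones. The obvious obstacle is that a finitely-valued piecewise polynomial $F$ is generically discontinuous, so it cannot be directly fed into the Lipschitz-function result; nor can it be uniformly approximated in $L^\infty$ by Lipschitz functions. The standard remedy is $L^1$-approximation combined with a quantitative equidistribution argument to control the contribution of the ``bad set'' where the smooth approximant and $F$ disagree.

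First I would reduce to the case where $F = 1_S$ is the indicator of a single semialgebraic piece $S \subset X = G/\Gamma$ (writing $F$ as a finite linear combination of such indicators, using finiteness of the value set). Fix $\varepsilon > 0$. Since $S$ is semialgebraic, its topological boundary $\partial S$ is contained in a semialgebraic set of dimension $< D$, hence of Haar measure zero, and moreover for every $\kappa > 0$ the $\kappa$-neighbourhood $(\partial S)_\kappa$ satisfies $\mu_X\bra{(\partial S)_\kappa} \ll_{S} \kappa$ (this is a Lojasiewicz/tube-type estimate for semialgebraic sets, which I would state as a lemma with an explicit polynomial dependence on the complexity of $S$). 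Choose Lipschitz functions $F^-, F^+ \colon X \to [0,1]$ with $F^- \leq 1_S \leq F^+$, with $F^+ - F^-$ supported on $(\partial S)_\kappa$, and with $\normLip{F^{\pm}} \ll \kappa^{-1}$; then $\int_X (F^+ - F^-)\, dx \ll \kappa$.

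Next I would run the decomposition. For the polynomial sequence $g \in \mathrm{poly}(G_\bullet)$ appearing in \eqref{eq:Mobius:semialg} and the relevant length $H$, apply the factorisation theorem (Theorem \ref{thm:GT-factor}, in its short-interval form, or directly the equidistribution input used in \cite{MSTT}) to write $g(h) = \e(h) g'(h) \gamma(h)$ with $\e$ smooth, $\gamma$ rational periodic, and $\bra{g'(h)\Gamma'}$ totally equidistributed in a subnilmanifold; this lets me transfer the $L^1$-bound $\int (F^+-F^-) \ll \kappa$ into an estimate
\[
	\EEE_{h<H} 1_P(h)\, \bra{F^+ - F^-}\bra{g(h)\Gamma} \ll \kappa + o_{N\to\infty}(1),
\]
uniformly over $P \in \mathcal{AP}$, after possibly passing to subprogressions indexed by the period of $\gamma$ and the smooth-approximation scale of $\e$ (this is where one must be a little careful: the total equidistribution is needed precisely so that the average over an arbitrary progression $P$ still sees the full Haar measure). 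Consequently
\[
	\abs{ \EEE_{h<H} 1_P(h)\, \mu(N+h)\, 1_S\bra{g(h)\Gamma} } \le \max_{\pm}\abs{ \EEE_{h<H} 1_P(h)\, \mu(N+h)\, F^{\pm}\bra{g(h)\Gamma} } + O\bra{\kappa} + o_{N\to\infty}(1),
\]
using $\abs{\mu}\le 1$ together with the sandwich $F^- \le 1_S \le F^+$ and the $L^1$ bound just obtained (the difference between $\sum \mu\, 1_S$ and $\sum \mu\, F^{\pm}$ is controlled by $\sum \abs{\mu}\,(F^+-F^-) \le \sum (F^+-F^-)$).

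Finally, each of the two terms $\EEE_{h<H} 1_P(h)\mu(N+h)F^{\pm}(g(h)\Gamma)$ is handled by \cite[Thm.\ 1.1(i)]{MSTT} applied to the Lipschitz function $F^{\pm}$: for $N^{0.626}\le H\le N$ it is $o_{N\to\infty}(1)$, where the rate is allowed to depend on $\normLip{F^{\pm}} \ll \kappa^{-1}$, hence on $\kappa$, hence ultimately on $\varepsilon$. Choosing $\kappa$ small enough in terms of $\varepsilon$ and then letting $N\to\infty$ shows the left-hand side of \eqref{eq:Mobius:semialg} is $\le \varepsilon + o_{N\to\infty}(1)$; since $\varepsilon$ was arbitrary, the supremum is $o_{N\to\infty}(1)$. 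The main obstacle, and the step deserving the most care, is making the uniformity over $g \in \mathrm{poly}(G_\bullet)$ and over $P\in\mathcal{AP}$ genuinely hold simultaneously with the $L^1$-approximation: one cannot choose the Lipschitz approximant after seeing $g$, so the boundary-measure estimate for $S$ must be used through the equidistribution of $g'$ on its subnilmanifold rather than pointwise, and the rational/smooth parts $\gamma,\e$ must be absorbed into a bounded refinement of the progression $P$ before invoking \cite{MSTT}. I expect the semialgebraic tube estimate $\mu_X\bra{(\partial S)_\kappa}\ll \kappa$ with control on the implied constant to be the other technical point requiring a self-contained argument (via Yomdin--Gromov-type parametrisations, or simply a compactness argument if one is content with ineffective constants, which suffices here since the rate in \eqref{eq:Mobius:semialg} is already allowed to depend on $F$).
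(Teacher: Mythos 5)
Your overall strategy --- reduce to indicators $1_S$ of semialgebraic pieces, sandwich $1_S$ between Lipschitz functions $F^-\le 1_S\le F^+$ whose difference is supported near $\partial S$, feed $F^{\pm}$ into the short-interval theorem of \cite{MSTT}, and control the error term via equidistribution plus a tube estimate --- is the same as the paper's; the tube estimate you request is essentially Lemma \ref{lem:Mobius:poly}\ref{it:Mobius:58:B}, proved by an elementary induction on the number of variables. However, there is a genuine gap in your treatment of the error term. You bound $\EEE_{h<H}1_P(h)\bra{F^+-F^-}\bra{g(h)\Gamma}$ by $\int_X(F^+-F^-)\,d\mu_X+o(1)\ll\kappa+o(1)$ using ``equidistribution after factorisation''. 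But the supremum in \eqref{eq:Mobius:semialg} runs over \emph{all} $g\in\mathrm{poly}(G_\bullet)$, and after the factorisation $g=\e g'\gamma$ the sequence $(g'(h)\Gamma')$ equidistributes only in a (translate of a) proper subnilmanifold $G'/\Gamma'$; the orbit average of $F^+-F^-$ therefore converges to an integral over that sub-orbit, not over $X$. The sub-orbit can lie entirely inside $(\partial S)_\kappa$ --- already for $G=\RR$, $\Gamma=\ZZ$, $S=[0,1/2)$ and $g\equiv 0$ the error term equals $1$ rather than $O(\kappa)$ --- so the ambient bound $\mu_X\bra{(\partial S)_\kappa}\ll\kappa$ gives nothing. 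You acknowledge this difficulty in your closing remarks, but you do not supply the mechanism that resolves it.

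That mechanism is the main technical content of the paper's proof of Theorem \ref{thm:Mobius:1_S-a}: one pulls $S$ back through the factorisation to the subnilmanifold on which equidistribution actually holds, checks that the pullback is again semialgebraic of bounded complexity (using that the relevant conjugation and translation maps are piecewise polynomial in Mal'cev coordinates), renormalises the defining polynomials there, and only then performs the Lipschitz sandwich and the tube estimate (this is Proposition \ref{thm:Mobius:1_S-b}, the equidistributed case). A second, related point: your suggestion that the smooth part $\e$ can be ``absorbed into a bounded refinement of the progression $P$'' does not work --- $\e(h)$ is not periodic, and since $1_S$ is discontinuous one cannot freeze $\e(h)$ on short subintervals either, as one does for Lipschitz $F$. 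The paper instead observes that $\e'(h)=\e(h)\e(\hh)^{-1}$ is polynomial in $t=h/H$, so that $1_S(\e'(tH)x)$ is the indicator of a semialgebraic subset of $(\RR/\ZZ)\times(G/\Gamma)$; this is precisely why the equidistributed-case proposition carries the extra real coordinate. Without these steps your argument does not close.
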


\begin{proof}[Proof of Theorem \ref{thm:Mobius} assuming Theorem \ref{thm:Mobius:semialg}]
Applying a dyadic decomposition, it will suffice to show that
\begin{align}\label{eq:Mobius:67:1}
\EEE_{N \leq n < 2N}  \mu(n) a\bra{\ip{f(n)}} &\to 0 &&& \text{as } N\to \infty.
\end{align}
Fix a small $\e > 0$. We will show that, for all sufficiently large $N$ we have
\begin{align}\label{eq:Mobius:67:2}
\abs{ \EEE_{N \leq n < 2N} \mu(n) a\bra{\ip{f(n)}}} \ll \e.
\end{align}
Splitting the average in \eqref{eq:Mobius:67:2} into intervals of length $\ceil{(2N)^{0.7}}$, we see that \eqref{eq:Mobius:67:2} will follow once we show that for sufficiently large $N$ and for $H$ satisfying $N^{0.7} \leq H < N$ we have
\begin{align}\label{eq:Mobius:67:3}
\abs{ \EEE_{h < H} \mu(N+h) a\bra{\ip{f(N+h)}}} \ll \e.
\end{align}
Pick an integer $k \in \NN$ such that $f(t) \ll t^k$, and let $\ell = 10 k$. By Theorem \ref{thm:error}, we have
\begin{align}\label{eq:Mobius:67:4}
	\ip{f(N+h)} = \ip{P_{N}(h)} + e_N(h), 
\end{align}
where $P_N$ is a polynomial of degree (at most) $\ell$ and one of the conditions \ref{thm:error}\ref{it:error:A}--\ref{it:error:C} holds. In the case \ref{it:error:A} we have $N \ll_{\e} H^{10/9} \leq N^{7/9}$, which implies that $N = O_\e(1)$. Assuming that $N$ is sufficiently large, we may disregard this case.

In the case \ref{it:error:B} we have $\EEE_{h < H} \abs{e_N(h)} < \e$, and as a consequence 
\begin{align}\label{eq:Mobius:67:5}
 \EEE_{h < H} \mu(N+h) a\bra{\ip{f(N+h)}} = 
  \EEE_{h < H} \mu(N+h) a\bra{\ip{P_N(h)}} + O(\e).
\end{align}
By Theorem \ref{thm:gp-param}, there exists a connected and simply connected nilpotent Lie group $G$, a lattice $\Gamma < G$, a filtration $G_\bullet$ and a finitely-valued piecewise polynomial map $F \colon G/\Gamma \to \ZZ$ such that for each polynomial $P$ of degree at most $\ell$ there exists $g \in \mathrm{poly}(G_\bullet)$ such that $a(\ip{P(h)}) = F(g(h)\Gamma)$. In particular,
\begin{align}\label{eq:Mobius:67:6}
\abs{ \EEE_{h < H} \mu(N+h) a\bra{\ip{P_N(h)}} }
\leq \sup_{g \in \mathrm{poly}(G_\bullet)}\abs{ \EEE_{h < H} \mu(N+h) a\bra{F(g(h)\Gamma} }
\end{align}
By Theorem \ref{thm:Mobius:semialg}, for sufficiently large $N$ the expression in \eqref{eq:Mobius:67:6} is bounded by $\e$. Inserting this bound into \eqref{eq:Mobius:67:5} yields \eqref{eq:Mobius:67:3}.

In the case \ref{it:error:C}, passing to an arithmetic progression we may replace $e_N$ with a constant sequence:
\begin{align}\label{eq:Mobius:67:7}
 &\abs{ \EEE_{h < H} \mu(N+h) a\bra{\ip{f(N+h)}} } 
 \\ \ll_{\e} & 
 \max_{P \in \mathcal{AP}} \max_{e \in \{-1,0,1\} } \abs{ \EEE_{h < H} \mu(N+h) 1_{P}(h) a\bra{\ip{P_N(h)}+e}}.
\end{align}
To finish the argument, it suffices to apply Theorem \ref{thm:Mobius:semialg} similarly to the previous case. \end{proof}

\subsection{Short intervals}
\label{sec:short-int} 
 
The remainder of this section is devoted to proving Theorem \ref{thm:Mobius:semialg}. We will derive it from closely related estimates for correlations of the M\"obius function with nilsequences in short intervals. Recall that we let $\mathcal{AP}$ denote the set of all arithmetic progressions in $\ZZ$.

\begin{theorem}[{Corollary of Thm.\ 1.1(i) in \cite{MSTT}}]\label{thm:Mobius:MSTT}
	Let $N,H$ be integers with $N^{0.626} \leq H \leq N$ and let $\delta \in (0,1/2)$. Let $G$ be a connected, simply connected nilpotent Lie group of dimension $D$, let $\Gamma < G$ be a lattice, let $G_\bullet$ be a nilpotent filtration on $G$ of length $d$, and assume that the complexity of $(G,\Gamma,G_\bullet)$ is at most $1/\delta$. Let $F \colon G/\Gamma \to \CC$ be a function with Lipschitz norm at most $1/\delta$. Then, for each $A > 0$ we have the bound
	\begin{align}\label{eq:Mobius:MSTT}
 \sup_{g \in \mathrm{poly}(G_\bullet)}  \sup_{P \in \mathcal{AP}}  \abs{ \EEE_{h < H} 
		 \mu(N+h) 1_P(h)F(g(h)\Gamma)
	} \ll_{A} \frac{ (1/\delta)^{O_{d,D}(1)}}{\log^A N}.
\end{align}
\end{theorem}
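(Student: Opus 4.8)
The plan is to read off Theorem~\ref{thm:Mobius:MSTT} from Theorem~1.1(i) of~\cite{MSTT} by means of an affine change of variables together with a short Fourier-analytic step to accommodate the supremum over arithmetic progressions. Recall that~\cite[Thm.\ 1.1(i)]{MSTT} provides, for a connected, simply connected nilpotent Lie group $G$ of dimension $D$ equipped with a lattice $\Gamma$ and a filtration $G_\bullet$ of length $d$, an estimate of the shape
\[
	\sup_{g \in \mathrm{poly}(G_\bullet)} \abs{ \sum_{x < n \le x+H} \mu(n) F(g(n)\Gamma) } \ll_A \frac{H \cdot \bra{\normLip{F}+1} \cdot \mathfrak{C}^{O_{d,D}(1)}}{\log^A x},
\]
valid whenever $x^{0.626} \le H \le x$, where $\mathfrak C$ denotes the complexity of $(G,\Gamma,G_\bullet)$; the polynomial shape of the dependence on $\mathfrak C$ and on $\normLip{F}$ is either recorded in~\cite{MSTT} or extracted by tracking the constants in its proof, the precise exponents playing no role below. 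Taking $x=N$ (the endpoints may be shifted harmlessly), normalising, and relabelling $n=N+h$ gives the case $P=\ZZ$ of~\eqref{eq:Mobius:MSTT}.

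To treat a general arithmetic progression $P = a + q\ZZ$ with $q \ge 1$, I would expand its indicator by character orthogonality, $1_P(h) = \frac{1}{q} \sum_{j=0}^{q-1} e\!\bra{j(h-a)/q}$ for $h \in \ZZ$, and apply the triangle inequality to bound the left-hand side of~\eqref{eq:Mobius:MSTT} by $\max_{0 \le j < q} \abs{ \EEE_{h<H} \mu(N+h)\, e(jh/q)\, F(g(h)\Gamma) }$; the factor $1/q$ exactly compensates the $q$ summands, so the final estimate will be uniform in $q$. For fixed $j$ and $q$, the weight $h \mapsto e(jh/q)\, F(g(h)\Gamma)$ is a nilsequence on the product nilmanifold $G' := (G \times \RR)/(\Gamma \times \ZZ)$: equipping $G \times \RR$ with the filtration $G'_i := G_i \times \RR$ for $i \le 1$ and $G'_i := G_i \times \{0\}$ for $i \ge 2$, which has length $d$, the sequence $h \mapsto \bra{g(h),\,(j/q)h}$ lies in $\mathrm{poly}(G'_\bullet)$ and $e(jh/q)\,F(g(h)\Gamma) = F'\bra{(g(h),(j/q)h)\cdot(\Gamma\times\ZZ)}$ with $F'(x,t) := F(x)\,e(t)$. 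This is exactly the $G\times\RR$ construction already employed in the proof of Theorem~\ref{thm:gp-param}, following~\cite[Lem.\ 4.1]{Frantzikinakis-2009}.

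The routine points to verify are that this enlargement respects the tolerances in the displayed bound. Indeed $G'$ has dimension $D+1$; appending to a Mal'cev basis of $G$ a single, central basis vector for the $\RR$ factor introduces only vanishing structure constants, so the complexity of $(G',\Gamma\times\ZZ,G'_\bullet)$ remains at most $1/\delta$ up to an absolute constant; and for the product metric $\normLip{F'} \ll \norminf{F} + \normLip{F} \ll 1/\delta$. Applying~\cite[Thm.\ 1.1(i)]{MSTT} on $G'$ (again with $x = N$) therefore bounds $\abs{\EEE_{h<H}\mu(N+h)\,e(jh/q)\,F(g(h)\Gamma)}$ by $(1/\delta)^{O_{d,D}(1)}/\log^A N$ for each $j$, hence also the maximum over $j$, uniformly in $a$, $q$ and $g \in \mathrm{poly}(G_\bullet)$. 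This is precisely~\eqref{eq:Mobius:MSTT}.

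I do not expect a serious obstacle, as the attribution indicates: the substance is contained in~\cite{MSTT}. The one delicate point is the explicit polynomial dependence of the implied constant in~\cite[Thm.\ 1.1(i)]{MSTT} on the complexity, dimension, and Lipschitz norm; if that theorem is phrased for a fixed nilmanifold with an unquantified constant, one must follow the constants through its proof, which is routine but not cost-free for a Green--Tao-type argument combining quantitative equidistribution (in the spirit of Theorem~\ref{thm:GT-factor}) with minor-arc estimates for $\mu$. Conversely, if~\cite[Thm.\ 1.1(i)]{MSTT} already includes an additive twist $e(\theta n)$ or a $q$-periodic weight, then the Fourier step of the second paragraph is unnecessary and Theorem~\ref{thm:Mobius:MSTT} follows by the change of variables alone.
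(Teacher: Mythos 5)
The paper itself gives no proof of Theorem~\ref{thm:Mobius:MSTT}: it is stated as a direct corollary of \cite[Thm.\ 1.1(i)]{MSTT} and used as a black box, so there is no internal argument to compare your attempt against. Your derivation is the natural unwinding of that citation, and the core steps --- relabelling $n=N+h$, expanding the progression indicator by additive characters, and absorbing the phase $e(jh/q)$ into a nilsequence on $(G\times\RR)/(\Gamma\times\ZZ)$ with the filtration $G'_0=G'_1=G\times\RR$, $G'_i=G_i\times\{0\}$ for $i\ge2$, noting that this preserves connectedness and simple connectedness, keeps the step $d$, increases the dimension only to $D+1$, keeps the complexity at most an absolute multiple of $1/\delta$, and gives $\normLip{F'}\ll\norminf{F}+\normLip{F}\ll1/\delta$ --- are all correct.

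There is, however, one genuine gap. In this paper $\mathcal{AP}$ denotes the set of \emph{finite} arithmetic progressions, not residue classes: this is unambiguous from the definition of total equidistribution (``each arithmetic progression $P\subset[N]$ of length at least $\delta N$'') and from the uses of Theorem~\ref{thm:Mobius:MSTT} in the proofs of Theorems~\ref{thm:Mobius} and~\ref{thm:Mobius:1_S-a}, where $P$ ranges over pieces of a partition of $[H]$ or over progressions of bounded diameter. Your expansion $1_P(h)=q^{-1}\sum_{j<q}e(j(h-a)/q)$ handles only the residue class $a+q\ZZ$, not the interval truncation that a general $P\in\mathcal{AP}$ also carries. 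To close this, write $P=(a+q\ZZ)\cap[m,m')$ with $H':=m'-m\le H$; the average then equals $\tfrac{H'}{H}$ times the analogous average over $[m,m')$, which after shifting is a length-$H'$ average based at $N+m\le 2N$. When $H'\ge(N+m)^{\theta}$ for the exponent $\theta$ in \cite[Thm.\ 1.1(i)]{MSTT}, your residue-class argument applies on the shorter interval. When $H'<(2N)^{\theta}$, one falls back on the trivial bound $\tfrac{H'}{H}\normLip{F}\ll N^{\theta-0.626}(1/\delta)$, which is a negative power of $N$ (hence acceptable) precisely because $\theta=5/8+\varepsilon<0.626$ for small $\varepsilon$. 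The margin between $5/8$ and the statement's $0.626$ exists exactly to pay for this step, and without it the ``change of variables alone'' shortcut you raise at the end would not handle finite progressions.
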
 
 
This theorem is almost the ingredient that we need, except that in our application the function $F$ is not necessarily continuous (much less Lipschitz). Instead, $F$ is a finitely-valued piecewise polynomial  function, meaning that there exists a partition $G/\Gamma = \bigcup_{i=1}^r S_i$ into semialgebraic pieces and constants $c_i \in \RR$ such that for each $x \in X$ and $1 \leq i \leq r$, $F(x) = c_i$ if and only if $x \in S_i$. In this case, it is enough to consider each of the level sets separately. It is clear that Theorem \ref{thm:Mobius:semialg} will follow from the following more precise result.
 
\begin{theorem}\label{thm:Mobius:1_S-a}
	Let $N,H$ be integers with $N^{0.626} \leq H \leq N$ and let $\delta \in (0,1/2)$. Let $G$ be a connected, simply connected nilpotent Lie group of dimension $D$, let $\Gamma < G$ be a lattice, let $G_\bullet$ be a nilpotent filtration on $G$ of length $d$, and assume that the complexity of $(G,\Gamma,G_\bullet)$ is at most $1/\delta$. Let $S \subset G/\Gamma$ be a semialgebraic set with complexity at most $E$. Then, for each $A \geq 1$ we have the bound
	\begin{align}\label{eq:Mobius:1_S-a}
 \sup_{g \in \mathrm{poly}(G_\bullet)}  \sup_{P \in \mathcal{AP}} 
 	\abs{ \EEE_{h < H} 
		 \mu(N+h) 1_P(h)1_S(g(h)\Gamma)
	} \ll_{A} \frac{ (1/\delta)^{O_{d,D,E}(1)}}{\log^A N}.
\end{align}
\end{theorem}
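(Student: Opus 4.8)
The plan is to approximate the indicator $1_S$ of the semialgebraic set $S$ from above and below by Lipschitz functions, apply Theorem \ref{thm:Mobius:MSTT} to these smooth approximants, and control the error terms coming from the ``boundary layer'' near $\partial S$ using quantitative equidistribution of the polynomial orbit $(g(h)\Gamma)_{h<H}$. First I would reduce to the case where $P = [H]$ (a sub-progression of $[H]$ amounts to replacing $g(h)$ by $g(qh+r)$, which is again polynomial with respect to a dilated filtration of controlled complexity, and shortening $H$ by a factor $q \ll 1/\delta$, which is harmless since we only need $H \geq N^{0.626}$ with room to spare — one can first peel off the case $q$ large by a trivial bound). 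Next, since $S$ has complexity at most $E$, for every $\eta > 0$ the set $\partial_\eta S := \{x \in G/\Gamma : d(x,\partial S) < \eta\}$ has Haar measure $\mu_X(\partial_\eta S) \ll_{D,E} \eta$ (a standard fact: the boundary of a semialgebraic set of bounded complexity is a semialgebraic set of codimension $\geq 1$ and bounded degree, hence has bounded $(D-1)$-dimensional volume, so its $\eta$-neighbourhood has volume $\ll \eta$). Choose Lipschitz functions $F^-_\eta \leq 1_S \leq F^+_\eta$ with $\normLip{F^\pm_\eta} \ll 1/\eta$ and $F^+_\eta - F^-_\eta \leq 1_{\partial_\eta S}$, e.g.\ $F^+_\eta(x) = \max(0, 1 - \eta^{-1}d(x, S))$ and similarly for $F^-_\eta$ using the complement.

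Then I would split
\begin{align*}
\EEE_{h<H} \mu(N+h) 1_S(g(h)\Gamma)
 &= \EEE_{h<H} \mu(N+h) F^+_\eta(g(h)\Gamma) \\
 &\qquad + \EEE_{h<H} \mu(N+h)\bra{1_S - F^+_\eta}(g(h)\Gamma),
\end{align*}
bound the first term by Theorem \ref{thm:Mobius:MSTT} with $1/\delta$ replaced by $\max(1/\delta, C/\eta)$, giving $\ll_A (1/(\delta\eta))^{O_{d,D}(1)}/\log^A N$, and bound the second term in absolute value by $\EEE_{h<H} 1_{\partial_\eta S}(g(h)\Gamma)$ (since $|\mu| \leq 1$ and $0 \leq F^+_\eta - 1_S \leq 1_{\partial_\eta S}$). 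The remaining task is to show
\[
\EEE_{h<H} 1_{\partial_\eta S}(g(h)\Gamma) \ll \eta^{c}
\]
for some fixed $c = c(d,D,E) > 0$, uniformly in $g$ and in $N$, provided $H \geq N^{0.626}$ (and $\eta$ is not too small relative to $N$). For this I invoke the quantitative factorisation theorem, Theorem \ref{thm:GT-factor}: write $g(h) = \e(h)g'(h)\gamma(h)$ with $g'$ generating a totally $1/M^C$-equidistributed orbit in a subnilmanifold $G'/\Gamma'$, $\e$ smooth and $\gamma$ rational-periodic, where $M \ll (1/\eta)^{O(1)}$ is chosen so that $1/M^C$ beats $\eta$. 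Partitioning $[H]$ into progressions along which $\gamma$ is constant and $\e$ is nearly constant (using smoothness and $H \gg N^{0.626}$ so these progressions are long), the orbit is, on each piece, a translate of an $M^{-C}$-equidistributed orbit, and the set $\partial_\eta S$ pulled back under the translation and the identification with $G'/\Gamma'$ is again semialgebraic of complexity $\ll_{D,E} 1$ with measure $\ll \eta$; hence $\EEE 1_{\partial_\eta S}(g'(h)\Gamma') = \mu_{X'}(\text{pullback of }\partial_\eta S) + O(M^{-C}\normLip{\cdot})$. Here one needs to replace $1_{\partial_\eta S}$ by a Lipschitz function with norm $\ll 1/\eta$ and absorb the error, which forces the relation between $M$ and $\eta$. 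Finally, optimise: choosing $\eta = (\log N)^{-A'}$ for suitable $A'$ makes both contributions $\ll_A (1/\delta)^{O_{d,D,E}(1)}/\log^A N$, and $\eta$ is still far larger than any negative power of $N$, so the factorisation step is valid.

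The main obstacle I anticipate is the last step — controlling $\EEE_{h<H} 1_{\partial_\eta S}(g(h)\Gamma)$ uniformly over all polynomial sequences $g$. The subtlety is that a general polynomial orbit need not equidistribute at all, so one genuinely needs the Green--Tao factorisation and must check that: (i) the semialgebraic complexity of $\partial_\eta S$, and of its image under the coordinate changes induced by passing to $G'/\Gamma'$ and by the rational/smooth decomposition, stays bounded in terms of $D$ and $E$ only (this uses that Mal'cev coordinates of $M$-rational group elements are $M$-rational, so the relevant maps are polynomial of bounded degree with bounded-height coefficients, keeping everything semialgebraic of bounded complexity); and (ii) the boundary-measure bound $\mu_{X'}(\partial_\eta S \cap G'/\Gamma') \ll_{D,E} \eta^{c}$ survives restriction to the subnilmanifold — here one may lose a power, getting $\eta^{1/D}$ or so, which is why I write $\eta^c$ rather than $\eta$, but any fixed positive power suffices. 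Everything else (the dyadic/progression reductions, the sandwiching by Lipschitz functions, the final optimisation of $\eta$) is routine.
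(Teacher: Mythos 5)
There is a genuine gap in the error-term estimate. Your decomposition bounds the boundary contribution by $\EEE_{h<H} 1_{\partial_\eta S}(g(h)\Gamma)$, i.e.\ you discard the factor $\mu(N+h)$ by taking absolute values, and then claim this is $\ll \eta^c$ uniformly over all $g \in \mathrm{poly}(G_\bullet)$. That claim is false: a polynomial orbit can live entirely inside the topological boundary of $S$. Take $X=\TT^2$, $S=\{x : x_1 > 1/2\}$ and $g(h)=(1/2,\alpha h)$; then $1_{\partial_\eta S}(g(h)\Gamma)=1$ for every $h$ and every $\eta>0$, so your error term equals $1$. (The theorem itself still holds there, because $1_S$ vanishes on the orbit --- but that rescue requires keeping the M\"obius factor in the boundary term and exploiting cancellation, which your bound has thrown away.) The same degeneracy appears, less visibly, after the Green--Tao factorisation: the translated subnilmanifold $G'/\Gamma'$ can sit inside, or very close to, the zero set of a defining polynomial of $S$, and then $\mu_{X'}(\partial_\eta S \cap X')$ is not small --- this is not merely "losing a power of $\eta$" as you suggest in your point (ii). The fix is essentially to reverse your order of operations, which is what the paper does: factorise first, pull the semialgebraic set back to the subnilmanifold (it stays semialgebraic of bounded complexity because the relevant coordinate changes are polynomial), renormalise the defining polynomials \emph{there} so that $\norm{P_i}_\infty = 1$ on the subnilmanifold, and only then perform the Lipschitz sandwich; degenerate defining polynomials then either make the pulled-back piece empty or drop out. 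The paper's boundary-measure input is also proved intrinsically in terms of the defining polynomials (Lemma \ref{lem:Mobius:poly}: $\lambda\{\abs{P}<\delta^d\norm{P}_\infty\}\ll\delta$), rather than via the metric neighbourhood $\partial_\eta S$, which is not obviously semialgebraic and whose measure bound "$\ll\eta$ by bounded $(D-1)$-volume" is itself not a free standard fact at this level of uniformity (the paper only obtains $\eta^{1/O(1)}$).

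Two further points. First, your treatment of the smooth part $\e(h)$ works for an \emph{upper} bound on visits to a thin set (near-constancy of $\e$ in the right-invariant metric just enlarges $\eta$ to $2\eta$), but once the boundary term must retain the M\"obius weight, replacing $\e(h)$ by a constant is no longer legitimate against the discontinuous $1_S$; the paper resolves this by encoding $\e(h)$ through the extra coordinate $t=h/H$, so that the relevant indicator becomes $1_T(h/H, g'(h)\Lambda)$ for a semialgebraic $T \subset (\RR/\ZZ)\times(\ttilde{G}/\Lambda)$, and the equidistributed-case proposition is proved for such sets. Second, your main term (applying Theorem \ref{thm:Mobius:MSTT} to the Lipschitz majorant with the original $g$, no factorisation) is fine, as is the reduction to $P=[H]$.
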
 
 
In the case where $(g(n)\Gamma)_n$ is highly equidistributed in $G/\Gamma$, we will derive Theorem \ref{thm:Mobius:1_S-a} directly from Theorem \ref{thm:Mobius:MSTT}. In fact, we will obtain a slightly stronger version, given in Theorem \ref{thm:Mobius:1_S-b} below. Then, we will deduce the general case of Theorem \ref{thm:Mobius:1_S-a} using the factorisation theorem from \cite{GreenTao-2012}. In order to avoid unnecessarily obfuscating the notation, from this point onwards we will allow all implicit constants to depend on the parameters $d$, $D$ and $E$; thus, for instance, the term on the right-hand side of \eqref{eq:Mobius:1_S-a} will be more succinctly written as ${ (1/\delta)^{O(1)}}/{\log^A N}$.

\subsection{Equidistributed case}

\begin{proposition}\label{thm:Mobius:1_S-b}
	Let $N,H$ be integers with $N^{0.626} \leq H \leq N$ and let $\delta \in (0,1/2)$. Let $G$ be a connected, simply connected nilpotent Lie group of dimension $D$, let $\Gamma < G$ be a lattice, let $G_\bullet$ be a nilpotent filtration on $G$ of length $d$, and assume that the complexity of $(G,\Gamma,G_\bullet)$ is at most $1/\delta$. Let $S \subset (\RR/\ZZ) \times (G/\Gamma)$ be a semialgebraic set with complexity at most $E$. Then, for each $A \geq 1$ there exists $B = O(A)$ such that 
	\begin{align}\label{eq:Mobius:1_S-b}
 \sup_{\substack{g \in \mathrm{poly}(G_\bullet)\\ \tilde\delta-\mathrm{t.e.d.}}}  \sup_{P \in \mathcal{AP}} 
 	\abs{ \EEE_{h < H} 
		 \mu(N+h) 1_P(h)1_S\bra{\frac{h}{H},g(h)\Gamma}
	} \ll_{A} \frac{ (1/\delta)^{O(1)}}{\log^A N},
\end{align}
where $\tilde\delta:= 1/\log^B{N}$ and the supremum is taken over all polynomial sequences $g$ such that $\bra{g(h)\Gamma}_{h =0}^H$ is totally $\tilde\delta$-equidistributed.
\end{proposition}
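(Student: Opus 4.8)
The plan is to replace the discontinuous weight $1_S$ by Lipschitz functions on the product nilmanifold $\tilde X := (\RR/\ZZ)\times(G/\Gamma)$, to apply Theorem~\ref{thm:Mobius:MSTT} to the resulting smooth correlation sum, and to absorb the approximation error using the total equidistribution hypothesis on $g$. First I would record the geometric input. Identifying $\tilde X$ with $[0,1)^{D+1}$ via the standard coordinate on $\RR/\ZZ$ together with the Mal'cev coordinates on $G/\Gamma$, the set $S$ becomes a semialgebraic subset of complexity $O_E(1)$, so its topological boundary $\partial S$ is semialgebraic of complexity $O_E(1)$ and of dimension at most $D$; hence for every $\e\in(0,1)$ the $\e$-neighbourhood of $\partial S$ has Haar measure $\ll_E\e$. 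Combining this with the (standard) fact that the Mal'cev metric on $\tilde X$ is comparable to the Euclidean metric in coordinates, with constants $\ll(1/\delta)^{O(1)}$ uniformly over filtered nilmanifolds of complexity at most $1/\delta$, one obtains Lipschitz functions $F_\e^-\leq 1_S\leq F_\e^+$ on $\tilde X$ with $\normLip{F_\e^\pm}\ll(1/\delta)^{O(1)}/\e$ and $\int_{\tilde X}(F_\e^+-F_\e^-)\ll_E\e$.

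Next I would split off the smooth part. Since $0\leq F_\e^+-1_S\leq F_\e^+-F_\e^-$ pointwise and $\abs{\mu},1_P\leq 1$, the left-hand side of \eqref{eq:Mobius:1_S-b} is at most $\abs{\EEE_{h<H}\mu(N+h)1_P(h)F_\e^+(h/H,g(h)\Gamma)}+\EEE_{h<H}(F_\e^+-F_\e^-)(h/H,g(h)\Gamma)$. For the first term I would observe that $h\mapsto(h/H,g(h))$ is a polynomial sequence on $\tilde G:=\RR\times G$ with respect to the product of $G_\bullet$ with the degree-one filtration on $\RR$, that $\tilde\Gamma:=\ZZ\times\Gamma$ is a lattice in $\tilde G$, and that the complexity of $(\tilde G,\tilde\Gamma,\tilde G_\bullet)$ is $\ll 1/\delta$; Theorem~\ref{thm:Mobius:MSTT}, applied on $\tilde X$ with $1/\delta$ replaced by $\max(1/\delta,\normLip{F_\e^+})=(1/\delta)^{O(1)}/\e$, then yields a bound $\ll_{A'}(1/\delta)^{O(1)}\e^{-O(1)}/\log^{A'}N$ for every $A'>0$, uniformly over $g\in\mathrm{poly}(G_\bullet)$ and $P\in\mathcal{AP}$.

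The step that genuinely uses the hypothesis, and the one I expect to require the most care, is the error term $\EEE_{h<H}(F_\e^+-F_\e^-)(h/H,g(h)\Gamma)$; here I would drop $1_P$ by nonnegativity and exploit the total $\tilde\delta$-equidistribution of $(g(h)\Gamma)_{h=0}^{H}$. Partition $[H]$ into $\asymp\tilde\delta^{-1/2}$ consecutive intervals $J_i$, each of length $\asymp\tilde\delta^{1/2}H$ (which is $\geq\tilde\delta H$ and $\gg 1$ for large $N$, using $H\geq N^{0.626}$). On $J_i$ the coordinate $h/H$ varies by $O(\tilde\delta^{1/2})$, so replacing it by the midpoint $t_i$ of $J_i/H$ costs $O(\tilde\delta^{1/2}\normLip{F_\e^+-F_\e^-})$ per point; applying total $\tilde\delta$-equidistribution to the interval $J_i$ and the Lipschitz function $x\mapsto(F_\e^+-F_\e^-)(t_i,x)$ on $G/\Gamma$ replaces $\EEE_{h\in J_i}$ by $\int_{G/\Gamma}(F_\e^+-F_\e^-)(t_i,\cdot)$ up to $O(\tilde\delta\normLip{F_\e^+-F_\e^-})$; and summing over $i$, weighted by $\abs{J_i}/H$, produces a Riemann sum for $\int_{\tilde X}(F_\e^+-F_\e^-)$ with mesh $\asymp\tilde\delta^{1/2}$. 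Thus the error term is $\ll\e+\tilde\delta^{1/2}\e^{-1}(1/\delta)^{O(1)}$.

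Finally I would choose the parameters. Given $A\geq 1$, take $\e=1/\log^A N$, let $A'$ be a sufficiently large multiple of $A$ (depending on the fixed exponent in the $\e^{-O(1)}$ factor above), and take $B$ a sufficiently large multiple of $A$ so that $\tilde\delta^{1/2}\e^{-1}=\log^{A-B/2}N\ll\log^{-A}N$; with $\tilde\delta=1/\log^B N$, the smooth term and both error contributions are then $\ll(1/\delta)^{O(1)}\log^{-A}N$, which is \eqref{eq:Mobius:1_S-b} with $B=O(A)$. Beyond the semialgebraic sandwich of the first paragraph — whose quantitative content (the uniform measure bound for $\e$-neighbourhoods of boundaries of bounded-complexity semialgebraic sets, together with the Mal'cev-versus-Euclidean metric comparison with polynomial-in-complexity constants) is the genuine obstacle, relying on effective real algebraic geometry and Mal'cev coordinate estimates — the argument is routine bookkeeping layered on top of Theorem~\ref{thm:Mobius:MSTT}.
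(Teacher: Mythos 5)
Your overall architecture coincides with the paper's: sandwich $1_S$ between Lipschitz functions, feed the smooth part to Theorem \ref{thm:Mobius:MSTT}, and use total $\tilde\delta$-equidistribution to show that the discrepancy term (supported near the boundary of $S$) has small average; the parameter choices $\e=1/\log^A N$ and $A',B=O(A)$ are also the paper's. Two of your devices differ harmlessly: you absorb the coordinate $h/H$ into a product nilmanifold $(\RR/\ZZ)\times(G/\Gamma)$ before invoking Theorem \ref{thm:Mobius:MSTT}, whereas the paper freezes $h/H$ on $O(1/\a)$ subintervals and exploits the supremum over arithmetic progressions already built into \eqref{eq:Mobius:MSTT}; and your Riemann-sum treatment of the error term over intervals of length $\asymp\tilde\delta^{1/2}H$ is the same in substance as \eqref{eq:Mobius:80:3}--\eqref{eq:Mobius:80:35}.

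The gap is exactly where you locate it, and it is not cosmetic: everything in this proposition beyond Theorem \ref{thm:Mobius:MSTT} is the quantitative claim that a semialgebraic set of complexity $O_E(1)$ admits a Lipschitz sandwich with small defect, and you assert this (Haar measure of the $\e$-neighbourhood of $\partial S$ is $\ll_E\e$, uniformly over all $S$ of complexity at most $E$, plus a Mal'cev-versus-Euclidean metric comparison) without proof. The uniform neighbourhood bound is true but is itself a nontrivial theorem, requiring uniform covering-number or integral-geometric estimates for semialgebraic families in the style of Yomdin--Comte; it is not available in the paper, and ``effective real algebraic geometry'' is not a proof. The paper circumvents it by smoothing in the \emph{values} of the defining polynomials rather than in the distance to $\partial S$: writing $S=\{P_1>0,\dots,P_r>0\}$, it takes $F=\prod_i\Psi(P_i)$ and $F'\approx\sum_i\Psi'(P_i)$ with one-dimensional ramps $\Psi,\Psi'$ of width $\eta$, so the defect is supported on $\bigcup_i\{\abs{P_i}\leq 2\eta\}$, and the required measure bound $\lambda\bra{\set{x}{\abs{P(x)}<\delta^d\norm{P}_\infty}}\ll_{d,D}\delta$ is proved from scratch by a double induction in Lemma \ref{lem:Mobius:poly}; the normalisation $\tau_1(S)\subset\bra{\tfrac{1}{10},\tfrac{9}{10}}^{1+D}$ also handles the discontinuity of the Mal'cev coordinates across the fundamental domain, which your metric formulation would separately have to address. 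To close your argument you must either prove the uniform neighbourhood bound or substitute the paper's sublevel-set device.
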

\begin{proof}
	We may freely assume that $\delta \geq 1/\log^A N$, since otherwise there is nothing to prove. In particular, $\delta = \log^{O(A)}N$ and $1/\delta = O(\log^A N)$. 
	Decomposing $S$ into a bounded number of pieces, we may assume that $S$ is a basic semialgebraic set. We will assume that $\inter S \neq \emptyset$; the case where $\inter S = \emptyset$ can be handled using similar methods and is somewhat simpler. Thus, $S$ takes the form
	\begin{align}\label{eq:Mobius-def-S}
		S = \set{ (t,x) \in (\RR/\ZZ) \times (G/\Gamma)}{ P_1(t,x) > 0,\ P_2(t,x) > 0,\ \dots, P_r(t,x) > 0 }, 
	\end{align}
	where $r = O(1)$ and $P_i$ are polynomial maps (under identification of $(\RR/\ZZ) \times (G/\Gamma)$ with $[0,1)^{1+D}$) with $\deg P_i = O(1)$ for $1 \leq i \leq r$. Scaling, we may assume that $\norm{P_i}_\infty = 1$ for all $1 \leq i \leq r$. Let $\tau_1$ denote Mal'cev coordinates on $(\RR/\ZZ) \times (G/\Gamma)$, given by $\tau_1(t ,x) = (t,\tau(x))$, where we identify $[0,1)$ with $\RR/\ZZ$ in the standard way.  Furthermore, splitting $S$ further and applying a translation if necessary, we may assume that $\tau_1(S) \subset \bra{\frac{1}{10},\frac{9}{10}}^{1+D}$, implying in particular that $\tau_1$ is continuous in a neighbourhood of $S$.
		
	 Let $\eta \in (0,\delta)$ be a small positive quantity, to be specified in the course of the argument, and let $\Psi,\Psi' \colon \RR \to [0,1]$ be given by
	\[
	\Psi(t) = 
	\begin{cases} 
	0 & \text{if } t < 0,\\
	t/\eta & \text{if } t \in [0,\eta],\\
	1 & \text{if } t > \eta,
	\end{cases}
	\qquad	
	\Psi'(t) = 
	\begin{cases} 
	0 & \text{if } \abs{t} > 2\eta,\\
	2- \abs{t}/\eta & \text{if } \abs{t} \in [\eta,2\eta],\\
	1 & \text{if } \abs{t} < \eta.
	\end{cases}	
	\]
It is clear that $\normLip{\Psi} = \normLip{\Psi'} = 1/\eta$. Let $\Psi_{\square} \colon [0,1)^{1+D} \to [0,1]$ be a $O(1)$-Lipschitz function with $\Psi_{\square}(t,u) = 1$ if $(t,u) \in \bra{\frac{1}{10},\frac{9}{10}}^{1+D}$ and $\Psi_{\square}(t,u) = 0$ if $(t,u) \not\in \bra{\frac{1}{20},\frac{19}{20}}^{1+D}$. For $1 \leq i \leq r$, put
	\begin{align*}
		F_i(t,x) &= \Psi(P_i(t,x)) &&& F_i'(t,x) &= \Psi_{\square}(\tau_1(t,x))\Psi'(P_i(t,x)),\\
		F(t,x) &= \prod_{i=1}^r F_i(t,x) &&& F'(t,x) &= \min\bra{\sum_{i=1}^r F_i'(t,x),1}.
	\end{align*}
	It is routine (although tedious) to verify that $F$ and $F'$ are $1/\eta^{O(1)}$-Lipschitz (cf.\ \cite[Lem.\ A.4]{GreenTao-2012}). Directly from the definitions, we see that for each $t \in \RR/\ZZ$ and $x \in G/\Gamma$ we have $F(t,x) = 1_S(t,x)$ or $F'(t,x) = 1$
. It follows that
\begin{align}\label{eq:Mobius:80:1}
	\abs{ \EEE_{h < H} 
		 \mu(N+h) 1_P(h)1_S\bra{\frac{h}{H},g(h)\Gamma}
	} \leq &
	\abs{ \EEE_{h < H} 
		 \mu(N+h) 1_P(h)F\bra{\frac{h}{H},g(h)\Gamma}
	} \\ \label{eq:Mobius:80:1a} + &
	{ \EEE_{h < H} 
		 F'\bra{\frac{h}{H},g(h)\Gamma}
	}.
\end{align}
	
	In order to estimate either of the summands in \eqref{eq:Mobius:80:1}--\eqref{eq:Mobius:80:1a}, we begin by dividing the interval $[H]$ into $O(1/\a)$ sub-intervals with lengths between $\a H$ and $2\a H$, where 
\begin{equation}\label{eq:Mobius:def-of-alpha}
\a := \bra{\log^A N \max\bra{\normLip{F},\normLip{F'},1}}^{-1} = \eta^{O(1)}/\log^A N
\end{equation}
		To estimate the first summand, we note that for each such sub-interval $[\hh,\hh+H')$, for each $h \in [\hh,\hh+H')$ we have
\begin{align}\label{eq:Mobius:80:15}
	F\bra{\frac{h}{H},g(h)\Gamma} 
	&=  F\bra{\frac{\hh}{H},g(h)\Gamma} + O\bra{\frac{H'}{H} \normLip{F}}
	\\ \label{eq:Mobius:80:15a} 
	&= F\bra{\frac{\hh}{H},g(h)\Gamma} + O\bra{\frac{1}{\log^A N}}.
\end{align}
Applying Theorem \ref{thm:Mobius:MSTT} to each sub-interval, for each constant $C \geq 1$ we obtain
\begin{align}\label{eq:Mobius:80:2}
	\abs{ \EEE_{h < H} 
		 \mu(N+h) 1_P(h)F(g(h)\Gamma)}
	\ll_{C}  \frac{1}{\log^A N} + \frac{1/\eta^{O(1)}}{\log^{C-A}N}.
\end{align}
Let us now consider the second summand. We have, similarly to \eqref{eq:Mobius:80:15},
	\[
	F'\bra{\frac{h}{H},g(h)\Gamma} = F'\bra{\frac{\hh}{H},g(h)\Gamma} + O\bra{\frac{1}{\log^A N}}.
	\]
	 For now, let us assume that $\a > \tilde\delta$, which we will verify at the end of the argument. We conclude from the fact that $(g(h)\Gamma)_{h =0}^{H-1}$ is totally $\tilde\delta$-equidistributed that
\begin{align}\label{eq:Mobius:80:3}
	\EEE_{h \in [\hh,\hh+H')} F'\bra{\frac{h}{H},g(h)\Gamma}
	=& \int_{G/\Gamma} F'\bra{\frac{\hh}{H}, x} dx  + \frac{\tilde\delta}{\eta^{O(1)}} + O\bra{\frac{1}{\log^A N}},
\end{align}	 
where we use $dx$ as a shorthand for $d \mu_{G/\Gamma}(x)$. 
Taking the weighted average of \eqref{eq:Mobius:80:3} over all sub-intervals, we conclude that
\begin{align}\label{eq:Mobius:80:35}
	\EEE_{h < H} F'\bra{\frac{h}{H},g(h)\Gamma}
	= \int_{[0,1)} \int_{G/\Gamma} F'\bra{t, x} dx dt + \frac{\tilde\delta}{\eta^{O(1)}} + O\bra{\frac{1}{\log^A N}}.
\end{align}	 
Applying Lemma \ref{lem:Mobius:poly}\ref{it:Mobius:58:B} to estimate the measure of the support of $F_i'$ for each $1 \leq i \leq r$ we conclude that 
\begin{align}\label{eq:Mobius:80:4a}
	\int_{[0,1)} \int_{G/\Gamma} F'(t,x)  dx dt \ll \eta^{1/O(1)}.
\end{align}
Thus, we may choose $\eta = 1/\log^{O(A)} N$ such that
\begin{align}\label{eq:Mobius:80:4b}
	\int_{[0,1)} \int_{G/\Gamma}F'(t,x) dx dt \leq \frac{1}{\log^A N},
\end{align}
which allows us to simplify \eqref{eq:Mobius:80:35} to 
\begin{align}\label{eq:Mobius:80:36}
	\EEE_{h < H} F'\bra{\frac{h}{H},g(h)\Gamma} =  O\bra{\frac{1}{\log^{B-O(A)}N}} + O\bra{\frac{1}{\log^A N}} .
\end{align}	 
Combining \eqref{eq:Mobius:80:2} and \eqref{eq:Mobius:80:36} with \eqref{eq:Mobius:80:1}--\eqref{eq:Mobius:80:1a}, we conclude that
\begin{align}\label{eq:Mobius:80:5}
	\abs{ \EEE_{h < H} \mu(N+h) 1_P(h)1_S(g(h)\Gamma) } 
	\ll_{C} \frac{1}{\log^{C-O(A)}N} + \frac{1}{\log^{B-O(A)} N} + \frac{1}{\log^A N}.
\end{align}
Letting $C$ and $B$ be sufficiently large multiples of $A$, we conclude that
\begin{align}\label{eq:Mobius:80:6}
	\abs{ \EEE_{h < H} \mu(N+h) 1_P(h)1_S(g(h)\Gamma) } 
	\ll_{A} \frac{1}{\log^{A}N},	
\end{align} 
as needed. Note that choosing $B$ as a large multiple of $A$ also guarantees that $\a = 1/\log^{O(A)}N  > \tilde\delta = 1/\log^B N$.
\end{proof} 
 
\subsection{General case} 
 Before we proceed with the proof of Theorem \ref{thm:Mobius:semialg} in full generality, we will need the following technical lemma. 
\begin{lemma}\label{lem:Mobius:poly}
	Let $d,D \in \NN$, and let $\cV$ denote the vector space of all polynomial maps $P \colon [0,1)^D \to \RR$ of degree at most $d$.
\begin{enumerate}
\item\label{it:Mobius:58:A} There is a constant $C > 1$ (dependent on $d,D$) such that for $P \in \cV$ given by
\[
	P(x) = \sum_{\a \in \NN_0^D} a_\a \prod_{i=1}^D x_i^{\a_i}
\]
we have the inequalities
\(
	C^{-1} \norm{P}_\infty \leq \max_{\a} \abs{a_\a} \leq C \norm{P}_\infty.
\)

\item\label{it:Mobius:58:B} For each $P \in \cV$ and for each $\delta \in (0,1)$ we have
	\begin{equation}\label{eq:Mobious:75:1}
		\lambda\bra{\set{x \in [0,1)^D}{ \abs{P(x)} < \delta^d\norm{P}_\infty}} \ll_{d,D} \delta.
	\end{equation}
\end{enumerate}	
\end{lemma}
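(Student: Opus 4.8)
For part~\ref{it:Mobius:58:A} the plan is to argue purely by finite-dimensionality. The space $\cV$ of degree-$\le d$ polynomials $P\colon[0,1)^D\to\RR$ has dimension $\binom{D+d}{d}$, and on it both $P\mapsto\norm{P}_\infty$ and $P\mapsto\max_\alpha\abs{a_\alpha}$ are norms; the only non-obvious point is that $\norm{P}_\infty=0$ forces $P=0$, which holds because a polynomial vanishing on the cube $[0,1)^D$ (a set with non-empty interior) vanishes identically. Since all norms on a finite-dimensional real vector space are equivalent, this gives the stated two-sided inequality, with $C$ depending only on $\dim\cV$, hence only on $d$ and $D$ (and after replacing $C$ by $\max(C,2)$ we may assume $C>1$). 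The bound $\norm{P}_\infty\le\binom{D+d}{d}\max_\alpha\abs{a_\alpha}$ is anyway immediate from the triangle inequality and $\abs{x_i}\le1$; for the reverse one could instead express each $a_\alpha$ as a fixed $P$-independent linear combination of the values of $P$ on a suitable finite grid in $[0,1)^D$, each such value being at most $\norm{P}_\infty$, which makes the dependence of $C$ on $d,D$ effective.

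For part~\ref{it:Mobius:58:B}, homogeneity in $P$ lets me normalise $\norm{P}_\infty=1$, so the claim becomes $\lambda(\{x:\abs{P(x)}<\delta^d\})\ll_{d,D}\delta$; write $\eta:=\delta^d$. First I would settle the case $D=1$, which is a form of the Remez inequality: for $p$ of degree $\le d$ with $\norm{p}_{[0,1]}\ge1$, if the set $E:=\{t\in[0,1]:\abs{p(t)}<\eta\}$ contained $d+1$ points $t_0<\dots<t_d$ pairwise at distance $\ge\sigma$, then Lagrange interpolation through these points bounds $\norm{p}_{[0,1]}$ by $(d+1)\eta\sigma^{-d}$ (using $\abs{t-t_j}\le1$ and $\prod_{j\ne i}\abs{t_i-t_j}\ge\sigma^d$), forcing $\sigma^d\le(d+1)\eta$. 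Taking $\sigma:=2((d+1)\eta)^{1/d}$ therefore rules out $d+1$ such points, so a maximal $\sigma$-separated subset of $E$ has at most $d$ elements and every point of $E$ lies within $\sigma$ of one of them; hence $E$ is covered by at most $d$ intervals of length $\ll_d\eta^{1/d}$, and $\lambda(E)\ll_d\eta^{1/d}=\delta$.

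To pass to general $D$ the cleanest route is the multivariate Remez inequality of Brudnyi and Ganzburg: for $P$ of degree $\le d$ on the cube $K=[0,1]^D$ and measurable $E\subset K$ with $\lambda(E)=\theta$ one has
\[
	\norm{P}_K\le T_d\!\left(\frac{1+(1-\theta)^{1/D}}{1-(1-\theta)^{1/D}}\right)\sup_{x\in E}\abs{P(x)},
\]
where $T_d$ is the Chebyshev polynomial. Applying this with $E=\{x:\abs{P(x)}<\eta\}$, using $\norm{P}_K=1$, $\sup_E\abs{P}\le\eta$ and the elementary bound $T_d(u)\le(2u)^d$ for $u\ge1$, one gets $\tfrac{1+(1-\theta)^{1/D}}{1-(1-\theta)^{1/D}}\ge\tfrac12\eta^{-1/d}$, and solving for $(1-\theta)^{1/D}$ and applying Bernoulli's inequality yields $\theta\le4D\eta^{1/d}=4D\delta$. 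I expect this last step to be the main obstacle: the naive reduction to $D=1$ --- writing $P(x',t)=\sum_{j\le d}c_j(x')t^j$, using part~\ref{it:Mobius:58:A} in dimensions $D$ and $D-1$ to produce an index $j_0$ with $\norm{c_{j_0}}_\infty\gg_{d,D}1$, then splitting $[0,1)^{D-1}$ according to whether $\abs{c_{j_0}(x')}$ exceeds a threshold and combining the inductive bound on the small part with the fibrewise one-dimensional estimate on the large part --- only delivers a bound of the shape $\delta^{c(d,D)}$ with $c>0$, since integrating $(\eta/\abs{c_{j_0}(x')})^{1/d}$ over $x'$ loses a (poly)logarithmic factor; getting the exponent $1$ in $\delta$ seems to genuinely require a multivariate polynomial inequality such as Brudnyi--Ganzburg (or Carbery--Wright). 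Since Section~\ref{sec:short-int} only ever uses part~\ref{it:Mobius:58:B} with \emph{some} positive power of $\delta$ (see~\eqref{eq:Mobius:80:4a}), the inductive argument would in fact be enough for the purposes of this paper.
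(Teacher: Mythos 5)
Your proof is correct, and for part \ref{it:Mobius:58:B} it takes a genuinely different route from the paper. The paper's argument for \ref{it:Mobius:58:B} is entirely elementary and self-contained: a double induction (on $d$ for $D=1$, using the derivative $P'$ and the $O(d)$ intervals of monotonicity; then on $D$, writing $P(y,t)=\sum_i t^i Q_i(y)$ and locating a coefficient polynomial $Q_j$, $j\geq 1$, with $\norm{Q_j}_\infty\gg 1$). Your approach instead proves the $D=1$ case via a Remez-type separated-points/Lagrange-interpolation argument (which is clean and correct) and then invokes the multivariate Brudnyi--Ganzburg inequality as a black box for general $D$; that computation checks out and buys a shorter proof with an explicit constant $4D$, at the cost of importing a nontrivial external theorem. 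One substantive correction to your closing remark: the ``naive'' reduction to $D=1$ does \emph{not} inherently lose a power or a logarithm. The paper decomposes dyadically according to the size of $\abs{Q_j(y)}$: the set $E_k$ where $2^{-k}\le\abs{Q_j(y)}<2^{-k+1}$ has $y$-measure $\ll 2^{-k/(d-j)}$ by the inductive hypothesis in $D-1$ variables, while the fibrewise one-dimensional bound contributes $\ll 2^{k/d}\delta$; since $j\ge 1$ the exponents satisfy $1/(d-j)>1/d$, so $\lambda(E_k)\ll 2^{-kj/(d(d-j))}\delta$ sums geometrically to $\ll\delta$ with the full exponent $1$. So Brudnyi--Ganzburg is a legitimate shortcut but not, as you suspected, necessary to reach exponent $1$. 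Your treatment of part \ref{it:Mobius:58:A} coincides with the paper's (equivalence of norms in finite dimensions), and your observation that the application in Section \ref{sec:short-int} only needs some positive power of $\delta$ is accurate.
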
 
\begin{proof}
	Item \ref{it:Mobius:58:A} follows from the fact that each two norms on the finitely-dimensional vector space $\cV$ are equivalent. For item \ref{it:Mobius:58:B} we proceed by induction with respect to $D$. Multiplying $P$ by a scalar, we may assume that $\norm{P}_\infty = 1$. 
	
	Suppose first that $D = 1$. We proceed by induction on $d$. If $d = 1$ then $P$ is an affine function $P(x) = a x + b$, and the claim follows easily. Assume that $d \geq 2$ and that the claim has been proved for $d-1$. By item \ref{it:Mobius:58:A}, at least one of the coefficients of $P$ has absolute value $\gg_{d,D} 1$. In fact, we may assume that this coefficient is not the constant term, since otherwise for all $x \in [0,1)$ we would have $P(x) \in (\frac{99}{100}P(0),\frac{101}{100}P(0))$ and hence the set in \eqref{eq:Mobious:75:1} would be empty for sufficiently small $\delta$. Thus, $\norm{P'}_\infty \gg_{d,D} 1$. By the inductive assumption, 
	\begin{equation}\label{eq:Mobious:75:2}
		\lambda\bra{\set{x \in [0,1)}{ \abs{P'(x)} < \delta^{d-1} }} \ll_{d} \delta.
	\end{equation}
	Thus, it will suffice to show that
	\begin{equation}\label{eq:Mobious:75:3}
		\lambda\bra{\set{x \in [0,1)}{ \abs{P(x)} < \delta^d,\  \abs{P'(x)} > \delta^{d-1}}} \ll_{d} \delta.
	\end{equation}
	For each interval $I \subset [0,1)$ such that $P'(x)$ has constant sign for $x \in I$ we have
	\begin{equation}\label{eq:Mobious:75:4}
		\lambda\bra{\set{x \in I}{ \abs{P(x)} < \delta^d,\  \abs{P'(x)} > \delta^{d-1}}} \ll \delta.
	\end{equation}
	Since $[0,1)$ can be divided into $O(d)$ intervals where $P$ is monotonous, \eqref{eq:Mobious:75:3} follows.
	
	Suppose now that $D \geq 2$ and the claim has been proved for all $D' < D$. Reasoning like above, we infer from item \ref{it:Mobius:58:A} that $P$ has a coefficient with absolute value $\gg_{d,D} 1$ other than the constant. We may expand $P$ in the form
	\begin{align*}
			P(y,t) &= \sum_{i=0}^{d} t^{i} Q_i(y),
			&&& y \in [0,1)^{D-1},\ t \in [0,1),
	\end{align*}
	where $Q_i$ are polynomials in $D-1$ variables of degree $d-i$. Changing the order of variables if necessary, we may assume that there exists $j$ with $1 \leq j \leq d$ such that $Q_j$ has a coefficient $\gg_{d,D} 1$, and hence $\norm{Q_j}_{\infty} \gg_{d,D} 1$. For $k \in \NN$, let us consider the set
\[
	E_k := \set{ (y,t) \in [0,1)^D }{ 
	\abs{P(y,t)} < \delta^d,\
	2^{-k} \leq \abs{Q_j(y)} < 2^{-k+1} 
	}
\]
The set in \eqref{eq:Mobious:75:4} is the disjoint union $\bigcup_{k=1}^\infty E_i$, so our goal is to show that 
	\begin{equation}\label{eq:Mobious:75:5}
	\sum_{k=1}^\infty \lambda(E_k) \ll_{d,D} \delta.
	\end{equation}
Fix a value of $k$. By the inductive assumption, as long as $j \neq d$, we have
	\begin{equation}\label{eq:Mobious:75:6}
	\lambda\bra{
	\set{y \in [0,1)^{D-1} }{ \abs{Q_j(y)} < 2^{-k+1} }
	}  \ll_{d,D} 2^{-k/(d-j)}.
	\end{equation}
	(If $j = d$, the set in \eqref{eq:Mobious:75:6} is empty for all sufficiently large $k$, and the reasoning simplifies.) 
For each $y \in 	[0,1)^{D-1}$ such that $2^{-k} \leq \abs{Q_j(y)} < 2^{-k+1}$, by the inductive assumption (for $D=1$) we have
	\begin{equation}\label{eq:Mobious:75:7}
	\lambda\bra{
	\set{t \in [0,1) }{ \abs{P(y,t)} < \delta^d }
	}  \ll_{d,D} 2^{k/d} \delta.  
	\end{equation}
	Combining \eqref{eq:Mobious:75:6} and \eqref{eq:Mobious:75:7} yields
	\begin{equation}\label{eq:Mobious:75:8}
	\lambda\bra{E_k} \ll_{d,D} 2^{-k j/d(d-j)} \delta \leq 2^{-k/d^2} \delta.
	\end{equation}
	Summing \eqref{eq:Mobious:75:8} gives \eqref{eq:Mobious:75:5} and finishes the argument.
\end{proof}
  
\begin{proof}[Proof of Theorem \ref{thm:Mobius:1_S-a}]
	The argument is very similar to the proof of Theorem 1.1 assuming Proposition 2.1 in \cite{GreenTao-2012-Mobius}. As the first step, we apply the factorisation theorem \cite[Thm.\ 1.19]{GreenTao-2012}, Theorem \ref{thm:GT-factor}, with $M_0 = \log N$ and parameter $C$ to be determined in the course of the argument. We conclude that there exists an integer $M$ with $\log N \leq M \ll \log^{O_C(1)}N$ such that $g$ admits a factorisation of the form
	\begin{align}\label{eq:Mobius:97:1}
		g(h) = \e(h) g'(h) \gamma(h),
	\end{align}
	where $\e$ is $(M,H)$-smooth, $\gamma$ is $M$-rational, and $g'$ takes values in a rational subgroup $G' < G$ which admits a Mal'cev basis $\cX'$ where each element is a $M$-rational combination of elements of $\cX$, and $(g'(h)\Gamma)_{h =0}^{H-1}$ is totally $1/M^{C}$-equidistributed in $G'/(\Gamma \cap G')$ (with respect to the metric induced by $\cX'$).
	
	With the same reasoning as in \cite{GreenTao-2012-Mobius}, we conclude that $(\gamma(h)\Gamma)_h$ is a periodic sequence with some period $q \leq M$, and for each $0 \leq j < q$ and $h \equiv j \bmod q$ we have $\gamma(h) \Gamma = \gamma_j \Gamma$ for some $\gamma_j \in G$ with coordinates $\tau(\gamma_j)$ that are rationals with height $\ll M^{O(1)}$. Splitting the average in \eqref{eq:Mobius:1_S-a} into sub-progressions, it will suffice to show that for each residue $0 \leq j < q$ modulo $q$, and for each arithmetic progression $Q \subset q \ZZ + j$ with diameter at most $N/M$ we have
	\begin{align}\label{eq:Mobius:97:2}
		\abs{ \EEE_{h < H} \mu(N+h) 1_Q(h) 1_S(\e(h) g'(h) \gamma_j \Gamma) } 
		\ll_A \frac{(1/\delta)^{O(1)}}{M^2 \log^A N}.
	\end{align}
	
	The key difference between our current work and the corresponding argument in \cite{GreenTao-2012-Mobius} is that $1_S$ is not continuous and hence in \eqref{eq:Mobius:97:2} we cannot replace $\e(h)$ with a constant and hope that the value of the average will remain approximately unchanged. Instead, we will use an argument of a more algebraic type. We note that, as a consequence of invariance of the metric on $G$ under multiplication on the right, for each $h,h' \in Q$ we have
	\[
		d\bra{ \e(h)g'(h)\gamma_j, \e(h')g'(h)\gamma_j} =
		d\bra{ \e(h), \e(h')} = O(1). 
	\]
Let us fix $\hh \in Q$ and put $\e'(h) = \e(h)\e(\hh)^{-1}$. Then $d(\e'(h),e_G) = O(1)$ and $g(h)\Gamma = \e(h) g'(h) \gamma_j \Gamma = \e'(h) \e(\hh)g'(h)\gamma_j\Gamma$. 

	Let $\Omega \subset G$ be a bounded semialgebraic set such that $\e'(h) \in \Omega$ for all $h \in Q$. For instance, we may take $\Omega$ to be the pre-image of a certain ball with radius $1/\delta^{O(1)}$ under $\tilde\tau$. Let also $\Pi := \tilde\tau^{-1}\bra{[0,1)^D}$ denote the standard fundamental domain for $G/\Gamma$. Consider the set
	\[
		R = \set{ (g_1,g_2) \in \Omega \times \Pi }{ g_1g_2\Gamma \in S}.
	\] 
We may decompose $R$ as
	\begin{align}\label{eq:Mobius:97:5}
		R &= \bigcup_{\gamma \in \Gamma} R_\gamma && \text{ where } & 
		R_\gamma = \set{ (g_1,g_2) \in \Omega \times \Pi }{ g_1g_2 \Gamma \in S,\ g_1g_2\gamma \in \Pi}.
	\end{align}
	Using the quantitative bounds in \cite[Lem.\ A.2 \& A.3]{GreenTao-2012}, we see that for each $\gamma \in \Gamma$ such that $R_\gamma \neq \emptyset$ we have $\abs{\tilde\tau(\gamma)} = O(1/\delta^{O(1)})$. Hence, the union in \eqref{eq:Mobius:97:5} involves $O(1/\delta^{O(1)})$ non-empty terms, and in particular is finite. Each of the sets $R_\gamma$ is semialgebraic with complexity $O(1)$. Moreover, since $\e'$ is a polynomial map of bounded degree, for each $\gamma \in \Gamma$ the set
	\[
		T_\gamma = \set{ (t,x) \in [0,1) \times \Pi }{ \bra{\e'(tH),x} \in R_\gamma}
	\]
	is also semialgebraic with complexity $O(1)$. Hence, \eqref{eq:Mobius:97:2} will follow once we show that for each semialgebraic set $T \subset [0,1) \times G/\Gamma$ with bounded complexity we have
	\begin{align}\label{eq:Mobius:97:6}
		\abs{ \EEE_{h < H} \mu(N+h) 1_Q(h) 1_{T}\bra{ \frac{h}{H}, \e(\hh) g'(h) \gamma_j \Gamma} } 
		\ll_A \frac{(1/\delta)^{O(1)}}{M^2 \log^A N}.
	\end{align}

\newcommand{\ttilde}[1]{\tilde{#1}'}
	Following \cite{GreenTao-2012-Mobius}, we put $\ttilde{G} := \gamma_j^{-1}G'\gamma_j$, $\Lambda := \Gamma \cap \ttilde{G}$ and $ \ttilde{g}(n) := \gamma_j^{-1}g'(n)\gamma_j$. Let also $D' = \dim G'$, let $\sigma$ and $\tilde\sigma$ denote the coordinate maps on $\ttilde{G}/\Lambda$ and $\ttilde{G}$ respectively, and let $\Delta = \tilde \sigma^{-1}\bra{[0,1)^{D'}}$ denote the fundamental domain. Then $\ttilde{g}$ is a polynomial sequence with respect to the filtration $\ttilde{G}_{\bullet}$ given by $\ttilde{G}_{i} = \gamma_j^{-1}G'_i\gamma_j$. We have a well-defined map $\iota \colon \ttilde{G}/\Lambda \to G/\Gamma$ given by 
	\[
		\iota(x\Lambda) = \e(\hh)\gamma_j x \Gamma.
	\]
	Thus, for all $h \in [H]$ we have
	\[
		\e(\hh) g'(h) \gamma_j \Gamma = \iota( \ttilde{g}(h) \Lambda)
	\]
	As discussed in \cite{GreenTao-2012}, the Lipschitz norm of the map $\iota$ is $O(M^{O(1)})$ and the sequence $(\ttilde{G}(h)\Lambda)_{h = 0}^{H-1}$ is $1/M^{\lambda C+O(1)}$-equidistributed, where $\lambda > 0$ is a constant dependent only on $d$ and $D$. 
	
	For each $\gamma \in \Gamma$, the map $\iota$ is a polynomial on the semialgebraic set $\Delta \cap \iota^{-1}(\Pi \gamma)$. The estimate on the Lipschitz norm of $\iota$ implies that $\Delta$ can be partitioned into $M^{O(1)}$ semialgebraic sets with complexity $O(1)$ such that, on each of the pieces $\iota$ is a polynomial of degree $O(1)$ (using the coordinates $\tilde \tau$ and $\tilde \sigma$). Applying the corresponding partition in \eqref{eq:Mobius:97:6}, we see that it will suffice to show that for each semialgebraic set $T \subset (\RR/\ZZ) \times (\ttilde{G}/\Lambda)$ with bounded complexity and for each constant $A' > 0$ we have
	\begin{align}\label{eq:Mobius:97:7}
		\abs{ \EEE_{h < H} \mu(N+h) 1_Q(h) 1_{T}\bra{ \frac{h}{H}, g_j(h) \Lambda} } 
		\ll_{A,A'} \frac{(1/\delta)^{O(1)}}{M^{A'} \log^A N}.
	\end{align}
Bearing in mind that $M \geq \log N$, it will suffice to show that
	\begin{align}\label{eq:Mobius:97:7a}
		\abs{ \EEE_{h < H} \mu(N+h) 1_Q(h) 1_{T}\bra{ \frac{h}{H}, g_j(h) \Lambda} } 
		\ll_{A} \frac{(1/\delta)^{O(1)}}{M^{A}}.
	\end{align}
We are now in position to apply Proposition \ref{thm:Mobius:1_S-b} on $\ttilde{G}/\Lambda$. The complexity of $(\ttilde{G},\Lambda,\ttilde{G}_\bullet)$ is $1/\delta'$, where $\delta' = 1/M^{O(1)}$. The largest exponent $A'$ with which Proposition \ref{thm:Mobius:1_S-b} is applicable to $(\ttilde{g}(h))_{h=0}^{H-1}$ satisfies $\log^{A'}N \gg M^{\mu C}$ for a constant $\mu \gg 1$, leading to 
	\begin{align}\label{eq:Mobius:97:8}
		\abs{ \EEE_{h < H} \mu(N+h) 1_Q(h) 1_{T}\bra{ \frac{h}{H}, g_j(h) \Lambda} } 
		\ll_{C} \frac{1}{M^{\mu C - O(1)}}.
	\end{align}
	In order to derive \eqref{eq:Mobius:97:7a} it is enough to let $C$ be a sufficiently large multiple of $A$.
\end{proof}

\bibliographystyle{alphaabbr}
\bibliography{bibliography}
\end{document}